\definecolor{mywords}{named}{blue}
\definecolor{myparagraph}{named}{red}
\newcommand{\beq}{\begin{equation}}
\newcommand{\eeq}{\end{equation}}
\newcommand{\beqa}{\begin{eqnarray}}
\newcommand{\eeqa}{\end{eqnarray}}
\newcommand{\beqas}{\begin{eqnarray*}}
\newcommand{\eeqas}{\end{eqnarray*}}
\newcommand{\ba}{\begin{array}}
\newcommand{\ea}{\end{array}}
\newcommand{\bi}{\begin{itemize}}
\newcommand{\ei}{\end{itemize}}
\newcommand{\nn}{\nonumber}
\newtheorem{lemma}{Lemma}
\newtheorem{theorem}{Theorem}
\newtheorem{corollary}{Corollary}
\newtheorem{definition}{Definition}
\newtheorem{remark}{Remark}
\newtheorem{assumption}{Assumption}
\def\bR{{\mathbb R}}
\def\cB{{\mathcal B}}
\def\cC{{\mathcal C}}
\def\cL{{\mathcal L}}
\def\cO{{\mathcal O}}
\def\cS{{\mathcal S}}
\def\cU{{\mathcal U}}
\def\cX{{\mathcal X}}
\def\cY{{\mathcal Y}}
\def\dist{{\rm dist}}
\def\dom{{\rm dom}}
\def\hx{{\hat x}}
\def\nn{{\nonumber}}
\def\rc{{\rm c}}
\def\tx{{\tilde x}}
\DeclareMathOperator{\diag}{diag}
\def\rC{{\rm C}}
\title{A first-order method for nonconvex--nonconcave minimax problems under a local Kurdyka--\L{}ojasiewicz condition}
\author{
Zhaosong Lu
\thanks{
Department of Industrial and Systems Engineering, University of Minnesota, USA (email: {\tt zhaosong@umn.edu}, {\tt wan02269@umn.edu}). This work was partially supported by the Air Force Office of Scientific Research under Award FA9550-24-1-0343, the Office of Naval Research under Award N00014-24-1-2702, and the National Science Foundation under Awards 2211491 and 2435911.
}
\and
Xiangyuan Wang
\footnotemark[1]
}
\date{July 2, 2025 (Revised: May 19, 2026)}
\begin{document}
\maketitle

\begin{abstract}
We study a class of nonconvex--nonconcave minimax problems in which the inner maximization problem satisfies a local Kurdyka--\L{}ojasiewicz (KL) condition that may vary with the outer minimization variable. In contrast to the global KL or Polyak--\L{}ojasiewicz (PL) conditions commonly assumed in the literature---which are significantly stronger and often too restrictive in practice---this local KL condition accommodates a broader range of practical scenarios. However, it also introduces new analytical challenges. In particular, as an optimization algorithm progresses toward a stationary point of the problem, the region over which the KL condition holds may shrink, resulting in a more intricate and potentially ill-conditioned landscape. To address this challenge, we show that the associated maximal function is locally generalized H\"{o}lder smooth. Leveraging this key property, we develop an inexact proximal gradient method for solving the minimax problem, where the inexact gradient of the maximal function is computed by applying a proximal gradient method to a KL-structured subproblem. Under mild assumptions, we establish complexity guarantees for computing an approximate stationary point of the minimax problem.
\end{abstract}

\noindent{\bf Keywords:} nonconvex--nonconcave minimax, local KL condition, local generalized H\"{o}lder smoothness, inexact proximal gradient method, first-order oracle complexity

\medskip

\noindent {\bf Mathematics Subject Classification:} 90C26, 90C30, 90C47, 90C99, 65K05 

\section{Introduction} \label{sec:intro}
In this paper, we consider a nonconvex--nonconcave minimax problem of the form
\begin{equation} \label{intro_problem}
\min_x \max_y \left\{ f(x, y) + p(x) - q(y) \right\},
\end{equation}
where $f$ is a smooth function that is nonconvex in $x$ and nonconcave in $y$, and $p$ and $q$ are possibly nonsmooth, closed, and simple convex functions. 

Problem~\eqref{intro_problem} arises in a wide range of applications in machine learning and operations research, including generative adversarial networks~\cite{arjovsky2017wasserstein, goodfellow2020generative}, reinforcement learning~\cite{dai2018sbeed, omidshafiei2017deep}, adversarial training~\cite{madry2017towards, sinha2017certifying}, and distributionally robust optimization~\cite{bertsimas2011theory, blanchet2025distributionally,rahimian2022frameworks}. Despite its broad applicability, problem~\eqref{intro_problem} remains computationally challenging due to its inherent nonconvex--nonconcave structure. For instance, computing a global Nash equilibrium---an important special case of~\eqref{intro_problem}---is generally NP-hard (see, e.g.,~\cite{jin2019minmax}).

In recent years, significant progress has been made under specific structural assumptions on problem~\eqref{intro_problem}. For example, several studies focus on the special case where $q=0$ and impose the global Polyak-\L ojasiewicz (PL) condition on the inner maximization problem of~\eqref{intro_problem}, which is generally weaker than the strong concavity assumption. Under this condition, gradient descent–ascent type methods have been developed, and complexity guarantees have been established for finding approximate stationary points. Remarkably, these guarantees match those obtained under the strong concavity assumption for the inner maximization subproblem of~\eqref{intro_problem} (see, e.g.,~\cite{huang2023enhanced,nouiehed2019solving, xu2023zeroth,yang2022faster}). In addition, first-order methods have been developed for problem~\eqref{intro_problem} from a variational inequality perspective, typically assuming the existence of a weak Minty variational inequality solution (see, e.g., \cite{bohm2022solving,cai2022accelerated,liu2021first,pethick2023escaping}).

More recently, \cite{li2025nonsmooth, zheng2025doubly, zheng2023universal} studied a class of minimax problems of the form~\eqref{intro_problem}, where $p$ and $q$ are indicator functions of simple convex compact sets and a global Kurdyka-\L ojasiewicz (KL) condition is imposed on the inner maximization problem. They developed gradient descent–ascent-type methods that alternately update the $x$ and $y$ variables using first-order schemes, and established complexity guarantees for finding approximate stationary points.  Notably, the class of minimax problems considered in \cite{li2025nonsmooth, zheng2025doubly, zheng2023universal} is significantly broader than those studied in \cite{huang2023enhanced, nouiehed2019solving,xu2023zeroth,yang2022faster}, since the KL condition generalizes the PL condition (which corresponds to the KL condition with exponent $1/2$) and accommodates nonsmooth objectives. 

However, requiring the KL property to hold globally is often too restrictive in practice, which limits the applicability of the proposed methods. To address this limitation, we relax the global KL assumption on the inner maximization problem of~\eqref{intro_problem} that is imposed in the existing literature. Specifically, we assume that for each fixed outer variable $x \in \dom\,p$, the KL condition holds only on a level set of the inner variable $y$. Moreover, this level set may depend on $x$, and its size may vary accordingly (see Assumption \ref{ass:Lip_Smo_KL}(iii) for details). This weaker assumption accommodates a broader range of practical scenarios but also introduces new analytical challenges. In particular, as an optimization algorithm progresses toward a stationary point of~\eqref{intro_problem}, the region over which the KL condition is valid may shrink, resulting in a more intricate and potentially ill-conditioned landscape. Moreover, we consider more general functions $p$ and $q$, beyond the indicator functions of simple convex compact sets considered in prior works  \cite{li2025nonsmooth, zheng2025doubly, zheng2023universal}, thereby further broadening the class of minimax problems under consideration.

In this paper, we study problem~\eqref{intro_problem} under the aforementioned local KL condition and other mild assumptions (see Assumption~\ref{ass:Lip_Smo_KL} below). In particular, we show that the maximal function, defined as $F^*(x) := \max_y \{ f(x, y) - q(y) \}$, is locally generalized H\"{o}lder smooth on the set $\{x \in \dom\,p: 0 \notin \partial \Psi(x) \}$, where $\Psi(x) := F^*(x) + p(x)$ is the value function of problem~\eqref{intro_problem} (see Theorem~\ref{thm:Phi_smooth}). Leveraging this key property, we develop an inexact proximal gradient method (Algorithm~\ref{algo_implement:whole}) to solve the problem $\min_x \{ F^*(x) + p(x) \}$, which is equivalent to the original minimax problem~\eqref{intro_problem}. Specifically, given the current iterate $x^k$, we apply a proximal gradient method (Algorithm~\ref{algo_implement:subsolver}) to approximately solve the subproblem $\max_y \{f(x^k, y) - q(y)\}$, starting from the previous inner iterate $y^{k-1}$, and obtain an approximate solution $y^k$. We then perform an inexact proximal gradient step to compute $x^{k+1}$, using $-\nabla_x f(x^k, y^k)$ as the forward direction together with a carefully chosen step size. We also establish complexity guarantees for the proposed method for computing an approximate stationary point of problem~\eqref{intro_problem}.

The main contributions of this paper are summarized below.

\begin{itemize}
\item  We establish a local generalized H\"older smoothness property for the maximal function $F^*$ under a local KL condition, which plays a crucial role in the development of a method for solving problem~\eqref{intro_problem} (see Theorem~\ref{thm:Phi_smooth}). 
\item 
We propose an inexact proximal gradient method for finding approximate stationary points of problem~\eqref{intro_problem}. Under mild assumptions, we establish that this method achieves an \emph{iteration complexity} of $\widetilde{\mathcal{O}}\big(\epsilon^{-\max\{(1-\theta)^{-1},\, \theta^{-1}\sigma\}}\big)$, and a \emph{first-order oracle complexity} of $\widetilde{\mathcal{O}}\big(\epsilon^{-(1-\theta)^{-1}(2\theta^2 - 2\theta + 1)\max\{(1-\theta)^{-1},\, \theta^{-1}\sigma\}}\big)$, measured by the number of gradient evaluations, for finding an 
$\mathcal{O}(\epsilon)$-approximate stationary point of \eqref{intro_problem}. Here, $\theta$ and $\sigma$ are the parameters for the local KL condition given in Assumption \ref{ass:Lip_Smo_KL}.\footnote{$\widetilde{\mathcal{O}}(\cdot)$ represents $\mathcal{O}(\cdot)$ with logarithmic factors hidden.}
\end{itemize}

The rest of this paper is organized as follows. Subsection~\ref{subsec:notation} introduces the notation, terminology, and assumptions used throughout the paper. In Section~\ref{sec:Holder}, we establish a local generalized H\"{o}lder smoothness property of the maximal function. Section~\ref{sec:subsolver} presents a proximal gradient method for minimizing functions that satisfy a local KL property. In Section~\ref{sec:FOD}, we propose an inexact proximal gradient method for solving problem~\eqref{intro_problem} and analyze its complexity. Section~\ref{sec:Numerical} presents preliminary numerical results illustrating the performance of the proposed method. In Section~\ref{sec:proof}, we provide the proof of the main results. Finally, we make some
concluding remarks in Section~\ref{sec:concluding}.

\subsection{Notation, terminology, and assumptions} \label{subsec:notation}
The following notation will be used throughout the paper. Let $\mathbb{R}^n$ denote the $n$-dimensional Euclidean space, and let $\overline{\mathbb{R}}=\mathbb{R}\cup\{\infty\}$. The standard inner product, $\ell_1$-norm, $\ell_\infty$-norm, and Euclidean norm are denoted by $\langle \cdot, \cdot \rangle$, $\|\cdot\|_1$, $\|\cdot\|_\infty$, and $\|\cdot\|$, respectively. For any two points $u, v \in \mathbb{R}^n$, the notation $[u, v]$ denotes the line segment connecting $u$ and $v$. Given a point $x$ and a closed set $S \subset \mathbb{R}^n$, let $\dist(x,S)$ denote the distance from $x$ to $S$. 
The closed ball centered at $x \in \mathbb{R}^n$ with radius $r$ is denoted by $\cB(x, r)$. In addition, $\operatorname{conv}(\cdot)$, $\operatorname{aff}(\cdot)$, $\operatorname{ri}(\cdot)$, and $\operatorname{int}(\cdot)$ denote the convex hull, affine hull, relative interior, and interior of a set, respectively.

For an extended real-valued function $\phi : \mathbb{R}^n \to \overline{\mathbb{R}}$, its \emph{domain} is denoted by $\dom\,\phi$, i.e., $\mathrm{dom}\, \phi = \{x : \phi(x) < \infty\}$. Such $\phi$ is called \emph{proper} if $\dom\,\phi \neq \emptyset$, and it is called  \emph{closed} (or \emph{lower semicontinuous}) if $\liminf_{z\to x} \phi(z)\geq \phi(x)$ holds for all $x\in\bR^n$. The \emph{limiting subdifferential} (see, e.g., \cite[Definition 8.3(b)]{rockafellar2009variational}) of a proper closed function $\phi$ at $x \in \mathrm{dom}\, \phi$ is defined as
\[
\partial \phi(x) := \left\{ v \in \mathbb{R}^n : \exists\, x^k \xrightarrow{\phi} x,\ v^k \to v\ \text{with }  \liminf_{z \to x^k,\, z \neq x^k} \frac{\phi(z) - \phi(x^k) - \langle v^k, z - x^k \rangle}{\|z - x^k\|} \geq 0 \ \,\forall k\right\}.
\]
% and the \emph{horizon subdifferential} (see, e.g., \cite[Definition 8.3(c)]{rockafellar2009variational}) of $\phi$ at $x \in \mathrm{dom}\, \phi$ is defined as
% \[
% \partial^{\infty} \phi(x) := \left\{ v \in \mathbb{R}^n : \exists\, x^k \xrightarrow{\phi} x,\ v^k \to v,\ \lambda_k\downarrow0 \ \text{with } v^k\in\partial \phi(x^k) \ \,\forall k\right\},
% \]
% where $\lambda_k\downarrow0$ means $\lambda_k>0$ and $\lambda_k\to 0$, and $x^k\overset{\phi}{\to} x$ means both $x^k\to x$ and $\phi(x^k)\to \phi(x)$. 
In addition, we use $\partial_{x_i} \phi$ to denote the limiting subdifferential of $\phi$ with respect to $x_i$. If $\phi$ is continuously differentiable, then $\partial \phi$ coincides with the gradient $\nabla \phi$. Besides, if $\phi$ is convex, then $\partial \phi$ corresponds to the classical convex subdifferential.  Moreover, if $\phi$ is proper and closed, $\partial \phi$ is outer semicontinuous on $\dom\, \phi$. That is, for any $x \in \dom\, \phi$, if $x^k\overset{\phi}{\to} x$ and $v^k \in \partial \phi(x^k)$ with $v^k \to v$, then $v \in \partial \phi(x)$ (see, e.g., \cite[Proposition 8.7]{rockafellar2009variational}).

Suppose that $\phi : \mathbb{R}^n \to \overline{\mathbb{R}}$ is a locally Lipschitz continuous function on $\cX$. The \emph{Clarke subdifferential} (see, e.g., \cite[Definition (1.1)]{clarke1975generalized}) of $\phi$ at $x \in \cX$, denoted by $\partial^\rC \phi(x)$, is defined as
\beq \label{clarke}
\partial^\rC \phi(x) := {\rm conv}\{v : \exists\, x^k \rightarrow x  \text{ such that } \nabla \phi(x^k) \to v \}.
\eeq
The \emph{restricted Clarke subdifferential} of $\phi$ with respect to $\mathcal{X}$, denoted by $\partial^\rC_\cX \phi$, is defined as
\beq \label{r-clarke}
\partial^\rC_\cX \phi(x) := {\rm conv}\{v : \exists\, x^k \in \cX \rightarrow x  \text{ such that } \nabla \phi(x^k) \to v \} \quad \forall x\in \cX.
\eeq
Clearly, $\partial^\rC_\cX \phi(x) \subseteq \partial^\rC \phi(x)$ for all $x\in \cX$.  
When $\partial^\rC_\cX \phi(x)$ is a singleton, we denote its unique element by $\nabla^\rC_\cX \phi(x)$.

A function $\phi : \mathbb{R}^n \to \overline{\mathbb{R}}$ is called \emph{$L_\phi$-Lipschitz continuous} on $\cX$ if $\lvert \phi(x)-\phi(y)\rvert\le L_\phi\|x-y\|$ for all $x,y\in  \cX$, and \emph{$L_{\nabla \phi}$-smooth} on $ \cX$ if $\|\nabla \phi(x)-\nabla \phi(y)\|\le L_{\nabla \phi}\|x-y\|$ for all $x,y\in \cX$.
In addition, we introduce the following notion of \emph{generalized H\"older smoothness}.
\begin{definition}[{\bf generalized H\"older smoothness}]
Suppose that $\phi : \mathbb{R}^n \to \overline{\mathbb{R}}$ is a locally Lipschitz continuous function on $\cX$ and that $\partial^\rC_\cX \phi(x)$ is a singleton for all $x\in \cX$. The function $\phi$ is said to be generalized H\"older smooth on $\cX$ if there exist $L_1 \geq 0$, $L_2 \geq 0$, and  $\nu \in (0, 1)$ such that 
\[
\|\nabla^\rC_\cX \phi(x)-\nabla^\rC_\cX \phi(y)\|\le L_1 \|x-y\|+L_2\|x-y\|^\nu \quad \forall x,y\in \cX.
\]  
\end{definition}

% In addition, suppose that $\phi$ is a locally Lipschitz continuous function on $\cX$ and that $\partial^\rC_\cX \phi(x)$ is a singleton for all $x\in \cX$. The function $\phi$ is said to be \emph{generalized H\"older smooth} on $\cX$ if there exist $L_1 \geq 0$, $L_2 \geq 0$, and  $\nu \in (0, 1)$ such that 
% \[
% \|\nabla^\rC_\cX \phi(x)-\nabla^\rC_\cX \phi(y)\|\le L_1 \|x-y\|+L_2\|x-y\|^\nu \quad \forall x,y\in \cX.
% \]

We now introduce an approximate stationary point for the problem $\min_x \phi(x)$, where $\phi$ is a proper closed function. A similar notion has been used in the context of weakly convex optimization (see, e.g., \cite{davis2019stochastic}). As will be shown later, under mild assumptions, the minimax problem~\eqref{intro_problem} can be viewed as a special case of this problem. Consequently, the following definition applies to problem~\eqref{intro_problem} as well.

\begin{definition}[{\bf $(r,\epsilon)$-stationary point}] \label{r-eps-stat}
Suppose $\phi$ is a proper closed function.  For any $\epsilon > 0$ and $r \geq 0$, a point $\bar{x}$ is called an $(r,\epsilon)$-stationary point of the problem $\min_x \phi(x)$ if $\bar{x}\in \dom\,\phi$ and $\dist(\bar{x}, \mathscr{S}_\epsilon) \leq r$,
where $\mathscr{S}_\epsilon = \{x: \dist(0, \partial \phi(x)) \le \epsilon\}$.
\end{definition}

It should be noted that when $\phi$ is a locally Lipschitz continuous function, any $(r,\epsilon)$-stationary point $\bar{x}$ of $\phi$ is also an $(r,\epsilon)$-Goldstein stationary point of $\phi$, that is, $\dist(0,\partial_r \phi(\bar{x})) \le \epsilon$, where
\[
\partial_r \phi(\bar{x}) := \operatorname{conv}\Big( \bigcup_{x \in \mathcal{B}(\bar{x}, r)} \partial \phi(x) \Big).
\]

We next introduce additional notation and assumptions for problem~\eqref{intro_problem}. For convenience, we define

\begin{align}
&\mathcal{X} := \dom \, p, \quad \mathcal{Y} := \dom \, q, \quad F(x, y) := f(x, y) - q(y), \label{eq:F} \\  
& F^*(x) := \max_y F(x, y), \quad  Y^*(x) := \{y : F(x, y) = F^*(x)\}, \label{eq:F-star} \\
& \Psi(x) := F^*(x) + p(x), \quad  \Psi^* := \min_x \Psi(x). \label{eq:Psi}
\end{align}

We assume that problem~\eqref{intro_problem} satisfies the following assumption.

\begin{assumption} \label{ass:Lip_Smo_KL}
\begin{enumerate}[label=\textup{(\roman*)}]
    \item For any fixed $y \in \mathcal{Y}$, the function $f(\cdot, y)$ is $L_f$-Lipschitz continuous on $\cX$. Moreover, the function $f: \mathbb{R}^n \times \mathbb{R}^m \to \mathbb{R}$ is $L_{\nabla f}$-smooth on $\cX \times \mathcal{Y}$. 
    
    \item $ p: \mathbb{R}^n \to \mathbb{R} \cup \{+\infty\} $ is proper closed convex, $ q:\mathbb{R}^m \to \mathbb{R} \cup \{+\infty\} $ is proper closed convex, and the proximal operators of $p$ and $q$ can be computed exactly. In addition, we assume that $\operatorname{aff}(\cX) = \bR^n$.\footnote{The assumption $\operatorname{aff}(\mathcal{X}) = \mathbb{R}^n$ is imposed merely for convenience. Without this assumption, the results of the paper remain valid and the analysis proceeds identically, except that the gradients and subdifferentials should be understood as being defined relative to $\operatorname{aff}(\mathcal{X})$.}
    
    \item For any fixed $x \in \cX$, $ \max_y F(x, y) $ has a nonempty solution set and a finite optimal value. The function $ F $ satisfies the following local Kurdyka-\L ojasiewicz (KL) condition in $y$: there exist constants $ C > 0 $, $\theta \in [1/2,1)$, $\gamma > 0$, and $\sigma > 0$ such that for any $x \in \cX$,
    \beq \label{KL_y}
    C(F^*(x) - F(x, y))^{\theta} \leq \dist(0, \partial_y F(x, y)) \qquad \forall y \in \mathcal{L}(x),
    \eeq
    where
    \beq \label{Lev_set}
    \mathcal{L}(x) := \{y : 0< F^*(x)-F(x, y) \leq \gamma\, \dist(0, \partial \Psi(x))^\sigma\}.
    \eeq  
\end{enumerate}
\end{assumption}

\begin{remark}
We refer to condition \eqref{KL_y} as a local KL condition because the associated KL inequality holds only on a level set of the variable $y$, which may vary with $x$. This condition is significantly weaker than the global KL condition imposed in the literature~\cite{li2025nonsmooth, zheng2025doubly, zheng2023universal}, 
where the KL inequality is required to hold for all $y$. In contrast to the global KL condition, the local KL condition applies to a broader class of minimax problems. However, the local KL condition introduces new analytical challenges. In particular, as an optimization algorithm progresses toward a stationary point of problem~\eqref{intro_problem}, the region in which the KL condition holds may shrink, leading to a more intricate and potentially ill-conditioned landscape. As a result, addressing problem~\eqref{intro_problem} under the local KL condition requires substantially different algorithmic design and analysis.
\end{remark}

We conclude this subsection with an example of a minimax problem that satisfies the local KL condition in Assumption~\ref{ass:Lip_Smo_KL}(iii), but not the global KL condition (which requires the KL inequality to hold for all $y \in \cY$). This example demonstrates that the local KL condition applies to a broader class of minimax problems. Moreover, it illustrates that the size of the level set associated with the local KL condition depends on the outer variable $x$ and shrinks as $x$ approaches a stationary point of $\Psi$.

{\bf Example 1.} 
Consider the problem
\beq \label{ex2_pr}
\min_{1 \leq x \leq 2}\, \max_{-1 \leq y \leq 1} -(1-y^2)^2 + x(1-y^2)^3 + \frac{1}{2}(x-1)^2 - x .
\eeq
Observe that problem \eqref{ex2_pr} is a special case of \eqref{intro_problem} with $f(x, y) = -(1-y^2)^2 + x(1-y^2)^3$, $p(x) = (x-1)^2/2 - x + \delta_{[1, 2]}(x)$, and $q(y) = \delta_{[-1, 1]}(y)$. It follows from the definition of $F$ in \eqref{eq:F} that $F(x,y)= -(1-y^2)^2 + x(1-y^2)^3-\delta_{[-1, 1]}(y)$. By this and the definition of $F^*$ in \eqref{eq:F-star}, one has
\beq \label{ex2-Fstar}
F^*(x) =\max_{-1 \leq y \leq 1} -(1-y^2)^2 + x(1-y^2)^3 = \max_{0 \leq t \leq 1} -(1-t)^2 + x(1-t)^3 = \max\{x - 1, 0\}.
\eeq
Using this, \eqref{eq:Psi}, and the expression of $p$, we have $\Psi(x)=F^*(x)+p(x)=(x-1)^2/2 - 1 + \delta_{[1, 2]}(x)$, which implies that
\beq \label{ex2-parPsi}
\dist(0, \partial \Psi(x)) = x - 1 \qquad \forall x \in [1, 2].
\eeq

We first show that the local KL condition fails to hold for the inner maximization problem of \eqref{ex2_pr} on any constant level set. That is, the KL inequality \eqref{KL_y} fails to hold for any $ C > 0 $, $ \theta \in (0, 1)$, $\gamma > 0$, and $\sigma = 0$. To this end, suppose $\sigma = 0$ and fix any $\gamma > 0$. It follows from \eqref{Lev_set} that $\cL(x) = \{y: 0 < F^*(x) - F(x, y) \leq \gamma\}$. Let $\bar{x} = \min\{1+\gamma, 2\}$.  Notice that $F^*(\bar{x}) - F(\bar{x}, 1) = \min\{\gamma,1\}$ and hence $1 \in \cL(\bar{x})$. In addition, one can verify that $\dist(0, \partial_y F(\bar{x}, 1)) = 0$. Combining these, we see that the KL inequality \eqref{KL_y} fails to hold for the inner maximization problem of \eqref{ex2_pr} at $\bar{x}$ and $y = 1$ for any $C > 0$ and $\theta \in (0, 1)$. Consequently, the global KL condition also fails.

We next show that the local KL condition holds for the inner maximization problem of \eqref{ex2_pr} on a variable level set, particularly, the KL inequality \eqref{KL_y} holds with $C = 1/2$, $\theta = 1/2$, $\gamma = 1/2$, and $\sigma = 1$. To this end, fix any $x \in (1, 2]$ and $y \in \cL(x)$, where $\cL(\cdot)$ is given in \eqref{Lev_set} with $\gamma = 1/2$ and $\sigma = 1$. For convenience, let $t = 1-y^2$. It then follows from $y \in \cL(x)$,  \eqref{ex2-Fstar}, and \eqref{ex2-parPsi} that
\[
x - 1 + t^2 - x t^3= x - 1 + (1-y^2)^2 - x(1-y^2)^3 = F^*(x) - F(x, y) \leq (x-1)/2,
\]
which along with $x \in (1, 2]$ implies that $ t^2(xt - 1)=xt^3 - t^2 \geq (x-1)/2 > 0$, and hence $t > 1/x \geq 1/2$. Using $t>1/2$ and $t = 1-y^2$, we have $|y| < \sqrt{2}/2$, which along with the expression of $F$ yields $\partial_y F(x, y) = 2yt(2-3xt)$. In addition, since $t > 1/x$, one has $2 - 3xt < -1$. Using these and $t>1/2$, we obtain that $\dist(0, \partial_y F(x, y)) = 2 |y| |t| |2-3xt| \geq |y|$. On the other hand, by \eqref{ex2-Fstar},  $t = 1-y^2$, $x \leq 2$, $|y| < \sqrt{2}/2$, $t \in (1/2, 1]$, and the expression of $F$, we have
\[
F^*(x) - F(x, y) = x - 1 + t^2 - x t^3 = (1-t) \big(x(1+t+t^2) - 1 - t\big) = y^2 \big(x(1+t+t^2) - 1 - t\big) \leq y^2 (1 + t + 2t^2) \leq 4 y^2,
\]
where the inequalities follow from $x \leq 2$ and $t \in (1/2, 1]$. Hence, we can see that
\[
C (F^*(x) - F(x, y))^\theta \leq \dist(0, \partial_y F(x, y))
\]
holds with $C = 1/2$ and $\theta = 1/2$.

\section{Local generalized H\"older smoothness of the maximal function} \label{sec:Holder}

In this section, we establish a local generalized H\"older smoothness property of the maximal function $F^*$, which will play a crucial role in the development of a first-order method for solving problem \eqref{intro_problem}. 

As our goal is to develop a first-order method for computing an $\cO(\epsilon)$-stationary point of problem \eqref{intro_problem}, it is important to characterize the behavior of the objective function $\Psi$ over the following subset of nonstationary points:
\beq \label{U-eps}
\cU_\epsilon := \{x \in \cX: \operatorname{dist}(0, \partial \Psi(x)) > \epsilon\} \qquad \forall \epsilon>0.
\eeq 
Given that $p$ is a simple component of $\Psi$, it suffices to study the behavior of the more sophisticated component $F^*$ on $\cU_\epsilon$.

For a special case of problem~\eqref{intro_problem}, where $q = 0$ and the inner maximization problem of~\eqref{intro_problem} satisfies a \emph{global} PL condition (i.e., a global KL condition with exponent $1/2$), the work~\cite{nouiehed2019solving} shows that the maximal function $F^*$ is globally Lipschitz smooth. The following theorem extends this result to a more general setting, in which $q$ is a possibly nonsmooth convex function and the inner maximization problem of~\eqref{intro_problem} satisfies only a \emph{local} KL condition as described in Assumption~\ref{ass:Lip_Smo_KL}(iii). Specifically, it establishes that the maximal function $F^*$ is locally generalized H\"older smooth on $\mathcal{U}_\epsilon$. This property will play a key role in the development of an inexact proximal gradient method for solving problem~\eqref{intro_problem}. The proof of this result, which relies on an error bound for $F(x, \cdot)$, is deferred to Subsection~\ref{sec:proof1}.

\begin{theorem} \label{thm:Phi_smooth}
Let $\epsilon>0$ be given and $\cU_\epsilon$ be defined in \eqref{U-eps}. Suppose that Assumption~\ref{ass:Lip_Smo_KL} holds. Then the following statements hold.
\begin{itemize}
\item[(i)] $\partial^\rC_\cX F^*(x)$ is a singleton for all $x \in \cU_\epsilon$, and $F^*$ is differentiable on $\cU_\epsilon \cap \operatorname{int}(\cX)$. 
\item[(ii)]  For any $x, x' \in \cU_\epsilon \cap \operatorname{int}(\cX)$ satisfying $\|x - x'\| \leq \gamma \epsilon^\sigma/(2L_f)$, we have
\begin{equation} \label{thm1_HolderU}
\|\nabla F^*(x) - \nabla F^*(x')\| \leq L_{\nabla f}\|x - x'\| + (1-\theta)^{-1} {C^{-1/\theta}}L_{\nabla f}^{1/\theta}\, \|x- x'\|^{\frac{1-\theta}{\theta}}.    
\end{equation}
\item[(iii)] For any $x, x' \in \cU_\epsilon$ satisfying $\|x - x'\| \leq \gamma \epsilon^\sigma/(4L_f)$, we have
\begin{equation} \label{thm1_tilde_Holder}
\|\nabla^\rC_\cX F^*(x) - \nabla^\rC_\cX F^*(x')\| \leq L_{\nabla f}\|x - x'\| + (1-\theta)^{-1} {C^{-1/\theta}}L_{\nabla f}^{1/\theta}\, \|x- x'\|^{\frac{1-\theta}{\theta}}.    
\end{equation}
\item[(iv)] It holds that
\beq \label{thm1_nablaPhi}
\nabla^\rC_\cX F^*(x) = \nabla_x f(x, y^*) \qquad \forall x\in\cU_\epsilon, \ y^* \in Y^*(x).
\eeq
\end{itemize}

\end{theorem}

The following result is a consequence of Theorem~\ref{thm:Phi_smooth}, whose proof is deferred to Subsection~\ref{sec:proof1}.

\begin{corollary} \label{cor:Fstar-bound}
Let $\epsilon>0$ be given and $\cU_\epsilon$ be defined in \eqref{U-eps}. Suppose that Assumption~\ref{ass:Lip_Smo_KL} holds. Then, for any $x, x'$ satisfying $[x, x'] \subseteq \cU_\epsilon$ and $\|x - x'\| \leq \gamma \epsilon^\sigma/(4L_f)$, we have
\beq \label{Fstar-bound}
F^*(x) \leq F^*(x') + \langle \nabla^\rC_\cX F^*(x'), x - x' \rangle + \frac{1}{2} L_{\nabla f} \|x - x'\|^2 + \frac{M}{1+\nu} \|x - x'\|^{1+\nu},
\eeq
where 
\beq \label{Phi_smooth_constants}
M := (1-\theta)^{-1} C^{-1/\theta} L_{\nabla f}^{1/\theta}, \quad \nu := \theta^{-1}(1-\theta).
\eeq

\end{corollary}

\section{A proximal gradient method for minimizing KL function} \label{sec:subsolver}

In this section, we consider a composite optimization problem under a local KL condition:
\begin{equation} \label{pr:KL_opt}
    h^* = \min\limits_z \{ h(z) := g(z) + q(z) \},
\end{equation}
where $q: \bR^n \to \bR \cup \{\infty\}$ is closed and convex, and $g$ is $L$-smooth on $\dom\,q$. Additionally, $h$ satisfies the following local KL condition:
\beq \label{KL_subpr}
 C(h(z) - h^*)^{\theta} \leq \operatorname{dist}(0, \partial h(z)) \qquad \forall z \text{ with } h^*< h(z) \leq h^* + \delta
\eeq
for some constants $C>0$, $\theta \in [1/2, 1)$, and $\delta>0$. 

Under a global KL condition (i.e., \eqref{KL_subpr} with $\delta=\infty$), general algorithmic frameworks for solving problem~\eqref{pr:KL_opt} and analyzing their convergence properties have been extensively studied in the literature (see, e.g., \cite{attouch2013convergence,bento2025convergence,frankel2015splitting,li2018calculus}).
Inspired by these works, we propose a proximal gradient method with backtracking line search for solving problem~\eqref{pr:KL_opt} under the local KL condition \eqref{KL_subpr}, and provide a self-contained convergence analysis for completeness and ease of reference.
This algorithm will subsequently serve as a subroutine for solving problem~\eqref{intro_problem}. Specifically, at each iteration, the method performs multiple proximal gradient steps along with a backtracking line search to ensure sufficient reduction in the objective function $h$. The method terminates once the change between consecutive iterates becomes sufficiently small. The proposed method is detailed in Algorithm~\ref{algo_implement:subsolver}.

\begin{algorithm}[H]
\caption{A proximal gradient method for problem \eqref{pr:KL_opt}}\label{algo_implement:subsolver}
\begin{algorithmic}[1]
\REQUIRE $z^0 \in \{z: h(z) \leq h^* + \delta \}$, $\overline{\lambda} > 0$, $\rho \in (0,1)$, and $\tau > 0$.
\FOR{$k=0,1,2,\dots$}
    \FOR{$i=0,1,2,\dots$}
        \STATE $\lambda_{k,i} = \overline{\lambda} \rho^i$.
        \STATE $z^{k+1,i} = \mathop{\arg\min}_{z} \left\{ \langle \nabla g(z^k), z \rangle + \frac{1}{2\lambda_{k,i}} \|z - z^k\|^2 + q(z) \right\}$.
        \IF{$h(z^{k+1,i}) + \frac{1}{2\lambda_{k,i}} \|z^{k+1,i} - z^k\|^2 \leq h(z^k)$}
            \STATE $z^{k+1} = z^{k+1,i}$, $\lambda_k = \lambda_{k,i}$.
            \STATE \textbf{break}
        \ENDIF
    \ENDFOR
    \IF{$\|z^{k+1} - z^k\| \leq \tau$}
        \STATE \textbf{return} $z^{k+1}$.
    \ENDIF
\ENDFOR
\end{algorithmic}
\end{algorithm}

The following result establishes bounds on $\lambda_k$ and on the number of inner iterations performed during each outer iteration $k$. As a consequence, it justifies the well-definedness of Algorithm~\ref{algo_implement:subsolver}. The proof of this result is deferred to Subsection~\ref{sec:proof2}.  

\begin{theorem} \label{thm:sub_LS}
Let $L$ be the Lipschitz smoothness constant of $g$, $\overline{\lambda}, \rho$ be given in Algorithm~\ref{algo_implement:subsolver}, and
\[
\overline{i} = \left\lceil \frac{\log (L \overline{\lambda})}{\log \rho^{-1}} \right\rceil_+.
\]
Then it holds that the number of inner iterations of Algorithm \ref{algo_implement:subsolver} at each outer iteration $k$ is at most $\overline{i}+1$. Moreover,
\beq \label{lambda-bound}
\min\{\rho/L, \overline{\lambda}\} \leq \lambda_{k} \leq \overline{\lambda}.
\eeq
\end{theorem}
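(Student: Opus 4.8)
\textbf{Proof proposal for Theorem~\ref{thm:sub_LS}.}

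The plan is to analyze a single outer iteration $k$ and show that the backtracking line search must accept a step size no smaller than $\rho/L$ (or $\overline\lambda$, if backtracking never triggers), which simultaneously bounds the number of inner iterations. The crux is a standard descent-lemma argument for the proximal gradient step: since $g$ is $L$-smooth on $\dom\,q$ and $z^{k+1,i}\in\dom\,q$, the quadratic upper bound
\[
g(z^{k+1,i}) \le g(z^k) + \langle \nabla g(z^k), z^{k+1,i}-z^k\rangle + \frac{L}{2}\|z^{k+1,i}-z^k\|^2
\]
holds. I would combine this with the optimality of $z^{k+1,i}$ as the minimizer of the strongly convex proximal subproblem in Step 4: testing the objective at $z = z^k$ gives
\[
\langle \nabla g(z^k), z^{k+1,i}-z^k\rangle + \frac{1}{2\lambda_{k,i}}\|z^{k+1,i}-z^k\|^2 + q(z^{k+1,i}) \le q(z^k).
\]
Adding these two inequalities and recalling $h = g+q$ yields
\[
h(z^{k+1,i}) + \Big(\frac{1}{\lambda_{k,i}} - \frac{L}{2}\Big)\|z^{k+1,i}-z^k\|^2 \le h(z^k),
\]
which in turn implies the acceptance criterion $h(z^{k+1,i}) + \frac{1}{2\lambda_{k,i}}\|z^{k+1,i}-z^k\|^2 \le h(z^k)$ whenever $\frac{1}{2\lambda_{k,i}} \le \frac{1}{\lambda_{k,i}} - \frac{L}{2}$, i.e. whenever $\lambda_{k,i} \le 1/L$.

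Next I would convert this sufficient condition into a bound on the backtracking index. Since $\lambda_{k,i} = \overline\lambda\rho^i$, the condition $\lambda_{k,i}\le 1/L$ holds as soon as $\overline\lambda\rho^i \le 1/L$, i.e. $i \ge \log(L\overline\lambda)/\log\rho^{-1}$; the smallest such nonnegative integer is at most $\overline i = \lceil \log(L\overline\lambda)/\log\rho^{-1}\rceil_+$. Hence the line search accepts at inner index $i \le \overline i$, so the number of inner iterations at outer step $k$ is at most $\overline i + 1$ (indices $0,1,\dots,\overline i$). For the lower bound on $\lambda_k$: if the step is accepted at $i = 0$ then $\lambda_k = \overline\lambda \ge \min\{\rho/L,\overline\lambda\}$; if it is accepted at some $i\ge 1$, then by minimality the previous trial index $i-1$ was rejected, and by the argument above rejection forces $\lambda_{k,i-1} > 1/L$, hence $\lambda_k = \rho\,\lambda_{k,i-1} > \rho/L \ge \min\{\rho/L,\overline\lambda\}$. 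The upper bound $\lambda_k \le \overline\lambda$ is immediate since $\lambda_{k,i} = \overline\lambda\rho^i \le \overline\lambda$. Well-definedness follows because the acceptance test is guaranteed to succeed within finitely many ($\le \overline i + 1$) inner iterations.

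I do not anticipate a serious obstacle here; the only points requiring care are (a) ensuring all iterates $z^{k+1,i}$ lie in $\dom\,q$ so that the $L$-smoothness of $g$ on $\dom\,q$ is applicable — this holds because the proximal subproblem in Step 4 has $q$ in its objective, forcing its minimizer into $\dom\,q$ — and (b) handling the edge cases $L\overline\lambda \le 1$ (then $\overline i = 0$ and the first step $\lambda_{k,0}=\overline\lambda \le 1/L$ is already accepted, consistent with $\lambda_k = \overline\lambda \ge \min\{\rho/L,\overline\lambda\}$) versus $L\overline\lambda > 1$. A minor subtlety is that the descent argument only shows $\lambda_{k,i}\le 1/L$ is \emph{sufficient} for acceptance, not necessary, so the line search may stop earlier with a larger $\lambda_k$; this is precisely why the lower bound takes the form $\min\{\rho/L,\overline\lambda\}$ rather than an equality.
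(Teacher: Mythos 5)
Your proposal is correct and rests on the same key calculation as the paper's proof: combining the descent lemma $g(z^{k+1,i}) \le g(z^k) + \langle\nabla g(z^k), z^{k+1,i}-z^k\rangle + \tfrac{L}{2}\|z^{k+1,i}-z^k\|^2$ with the optimality inequality for the proximal subproblem (tested at $z=z^k$) to conclude that $\lambda_{k,i}\le 1/L$ forces acceptance. The only difference is presentational: the paper argues by contradiction (assume the inner loop exceeds $\overline{i}+1$ iterations, then show the test at index $\overline{i}$ must have passed since $\overline{\lambda}\rho^{\overline{i}}\le 1/L$), and derives the lower bound $\min\{\rho/L,\overline{\lambda}\}\le\overline{\lambda}\rho^{\overline{i}}\le\lambda_k$ directly from the definition of $\overline{i}$, whereas you argue forward and obtain the lower bound via the contrapositive (if index $i-1$ was rejected then $\lambda_{k,i-1}>1/L$, so $\lambda_k=\rho\lambda_{k,i-1}>\rho/L$); both are correct and the content is the same.
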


The following theorem establishes that Algorithm~\ref{algo_implement:subsolver} terminates in a finite number of iterations and yields a desired approximate solution to problem~\eqref{pr:KL_opt}. The proof is deferred to Subsection~\ref{sec:proof2}.

\begin{theorem} \label{thm:subsolver_iters}
Let $C, \delta, \theta$ be given in \eqref{KL_subpr}, $\overline{\lambda}, \rho, \tau$ be given in  Algorithm \ref{algo_implement:subsolver}, and let
\begin{align}
& \underline{\lambda} = \min\{\rho/L, \overline{\lambda}\},\qquad \underline{\beta} = \frac{C^2}{2\overline{\lambda}} \big(L + \underline{\lambda}^{-1}\big)^{-2}, \qquad \overline{\beta} = \frac{C^2}{2\underline{\lambda}} \big(L + \overline{\lambda}^{-1}\big)^{-2}, \label{thm2_const} \\
& C' = \min \left\{\frac{1}{2}, \frac{(2^{\frac{2\theta-1}{2\theta}}-1) \delta^{1-2\theta}}{(2\theta-1)\overline{\beta}}\right\}, \quad 
\qquad \overline{K}_\theta :=
\begin{cases}
\left\lceil \frac{1+\underline{\beta}}{\underline{\beta}} \log(2\overline{\lambda} \delta \tau^{-2}) \right\rceil_{+} + 1 & \text{if } \theta = \tfrac{1}{2}, \\[2ex]
\left\lceil \frac{1}{C' (2\theta-1)\underline{\beta}} \left( 2\overline{\lambda} \tau^{-2} \right)^{2\theta-1} \right\rceil + 1 & \text{if } \theta \in (\tfrac{1}{2}, 1).  \label{cor1_K}
\end{cases}  
\end{align}
Then Algorithm \ref{algo_implement:subsolver} terminates in at most $\overline{K}_\theta$ iterations, and outputs a point $z^{k+1}$ satisfying $\|z^{k+1}-z^{k}\| \leq \tau$ for some $k < \overline{K}_\theta$. Moreover, it holds that
\beq \label{h-opt}
h(z^{k+1}) - h^* \leq \big(C^{-1}(L + \underline{\lambda}^{-1})\tau\big)^{\frac{1}{\theta}}.
\eeq
\end{theorem}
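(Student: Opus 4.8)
The plan is to carry out the classical KL descent analysis for the sequence $r_k := h(z^k)-h^*$, relying on three facts that hold along the run of Algorithm~\ref{algo_implement:subsolver}. By Theorem~\ref{thm:sub_LS} each inner loop terminates and $\underline{\lambda}\le\lambda_k\le\overline{\lambda}$. The accepted step satisfies the sufficient decrease $h(z^{k+1})+\tfrac{1}{2\lambda_k}\|z^{k+1}-z^k\|^2\le h(z^k)$, hence $r_k-r_{k+1}\ge\tfrac{1}{2\overline{\lambda}}\|z^{k+1}-z^k\|^2$ and $h(z^k)\le h(z^0)\le h^*+\delta$ for all $k$, so the iterates remain in the sublevel set where \eqref{KL_subpr} is available whenever $r_k>0$. (If $r_{k+1}=0$ at some step, the acceptance test forces $z^{k+1}=z^k$, the algorithm stops, and \eqref{h-opt} is immediate; so below I may assume $r_k>0$ for all relevant indices.)

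First I would bound the subgradient at the new iterate. The strongly convex proximal subproblem defining $z^{k+1}$ has optimality condition $\lambda_k^{-1}(z^k-z^{k+1})-\nabla g(z^k)\in\partial q(z^{k+1})$, so $\nabla g(z^{k+1})-\nabla g(z^k)+\lambda_k^{-1}(z^k-z^{k+1})\in\partial h(z^{k+1})$; with $L$-smoothness of $g$ on $\dom q$ and $\lambda_k\ge\underline{\lambda}$ this gives $\dist(0,\partial h(z^{k+1}))\le (L+\underline{\lambda}^{-1})\|z^{k+1}-z^k\|$. Combining this with the KL inequality \eqref{KL_subpr} at $z^{k+1}$ and the sufficient decrease yields the master recursion
\[
r_k-r_{k+1}\;\ge\;\underline{\beta}\,r_{k+1}^{2\theta},
\]
with $\underline{\beta}$ as in \eqref{thm2_const}.

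Next I would turn this recursion into an explicit rate for $r_k$, splitting on $\theta$. For $\theta=\tfrac12$ it reads $r_{k+1}\le(1+\underline{\beta})^{-1}r_k$, so $r_k\le(1+\underline{\beta})^{-k}\delta$, and using $\log(1+\underline{\beta})\ge\underline{\beta}/(1+\underline{\beta})$ one sees $r_k\le\tau^2/(2\overline{\lambda})$ once $k\ge\tfrac{1+\underline{\beta}}{\underline{\beta}}\log(2\overline{\lambda}\delta\tau^{-2})$. For $\theta\in(\tfrac12,1)$, with $\alpha:=2\theta-1\in(0,1)$, I would compare successive values of $\phi(r):=r^{-\alpha}$: in the ``slow'' case $r_{k+1}^{2\theta}\ge\tfrac12 r_k^{2\theta}$ one gets $\phi(r_{k+1})-\phi(r_k)\ge\alpha r_k^{-\alpha-1}(r_k-r_{k+1})\ge\tfrac{\alpha\underline{\beta}}{2}$ (since $2\theta-\alpha-1=0$), while in the ``fast'' case $r_{k+1}^{2\theta}<\tfrac12 r_k^{2\theta}$ one gets $\phi(r_{k+1})-\phi(r_k)>(2^{\alpha/(2\theta)}-1)r_k^{-\alpha}\ge(2^{(2\theta-1)/(2\theta)}-1)\delta^{1-2\theta}$ using $r_k\le\delta$; because $\underline{\beta}\le\overline{\beta}$, both lower bounds are at least $(2\theta-1)\underline{\beta}C'$ with $C'$ as in \eqref{thm2_const}. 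Telescoping gives $r_k^{-(2\theta-1)}\ge k(2\theta-1)\underline{\beta}C'$, i.e. $r_k\le(k(2\theta-1)\underline{\beta}C')^{-1/(2\theta-1)}$, which drops below $\tau^2/(2\overline{\lambda})$ once $k\ge((2\theta-1)\underline{\beta}C')^{-1}(2\overline{\lambda}\tau^{-2})^{2\theta-1}$.

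Finally I would close the argument. Since $\|z^{k+1}-z^k\|^2\le 2\lambda_k(r_k-r_{k+1})\le 2\overline{\lambda}r_k$, whenever $r_k\le\tau^2/(2\overline{\lambda})$ the stopping test $\|z^{k+1}-z^k\|\le\tau$ is met; by the rate estimates this occurs at some index $k<\overline{K}_\theta$, which is the claimed iteration bound (the extra ``$+1$'' in $\overline{K}_\theta$ accounts for the iterate $z^{k+1}$ produced at the terminating iteration $k$). At that index, the subgradient bound gives $\dist(0,\partial h(z^{k+1}))\le(L+\underline{\lambda}^{-1})\tau$, and one more use of \eqref{KL_subpr} at $z^{k+1}$ (trivial if $r_{k+1}=0$) gives $C\,r_{k+1}^{\theta}\le(L+\underline{\lambda}^{-1})\tau$, which is exactly \eqref{h-opt}. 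I expect the only genuinely delicate point to be the sublinear-rate lemma for $\theta\in(\tfrac12,1)$: making the two-regime comparison of $\phi$ produce a \emph{single} uniform per-step decrement matching the stated constant $C'$---in particular tracking where $\underline{\beta}$ versus $\overline{\beta}$ and the $\delta^{1-2\theta}$ factor enter---is the part requiring care, while everything else is a direct combination of sufficient decrease, the KL inequality, and the step-size bounds from Theorem~\ref{thm:sub_LS}.
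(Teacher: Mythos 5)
Your proposal is correct and follows essentially the same route as the paper: the same master recursion $r_k - r_{k+1} \ge \underline{\beta}\,r_{k+1}^{2\theta}$ built from sufficient decrease, the proximal optimality condition, and the KL inequality; the same two-regime analysis via the Lyapunov function $r^{-(2\theta-1)}$ (up to the constant $(2\theta-1)^{-1}$, this is exactly the paper's $\psi$); and the same deduction of \eqref{h-opt} from the subgradient bound $\dist(0,\partial h(z^{k+1}))\le(L+\underline\lambda^{-1})\|z^{k+1}-z^k\|$ combined once more with KL. The constants $\underline{\beta},\overline{\beta},C'$ enter in precisely the places you anticipate. One small inaccuracy in your aside: $r_{k+1}=0$ does \emph{not} force $z^{k+1}=z^k$ via the acceptance test (it is $r_k=0$ that does); however, this parenthetical plays no role in the actual argument—the paper simply notes the rate bound holds trivially when $r_k=0$—so the proof itself is unaffected.
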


\section{An inexact proximal gradient method for problem \eqref{intro_problem}} \label{sec:FOD}

In this section, we propose an inexact proximal gradient method for solving problem \eqref{intro_problem} and analyze its complexity for finding a $(\gamma \epsilon^\sigma/(4L_f),\epsilon)$-stationary point of \eqref{intro_problem} for $\epsilon >0$. 

Before proceeding, we introduce some additional notation below. Given any $\epsilon>0$, let
\beq \label{X-eps} 
\cX_\epsilon := \{x\in\cX: \dist(0, \partial\Psi(x)) \leq \epsilon\}, \quad \cX^\rc_\epsilon :=  \{x\in\cX:  \dist(x, \cX_\epsilon ) > \gamma \epsilon^\sigma / (4L_f) \},  \quad r:=\gamma \epsilon^\sigma / (4L_f),  
\eeq
where $\gamma, \sigma, L_f$ are given in Assumption \ref{ass:Lip_Smo_KL}.

To propose a method for finding an $(r,\epsilon)$-stationary point of problem \eqref{intro_problem}, we first make some key observations. Suppose $x'\in\cX^\rc_\epsilon$, that is, $x'$ is not an $(r,\epsilon)$-stationary point of \eqref{intro_problem}. Given any $x\in\cX\cap\cB(x',r)$, we observe that $[x',x] \subseteq \cX$ and moreover $\dist(0,\Psi(z))>\epsilon$ for all $z\in [x',x]$. In view of these, one can see that $[x', x] \subseteq \cU_\epsilon$, where $\cU_\epsilon$ is defined in \eqref{U-eps}. It follows from Corollary \ref{cor:Fstar-bound} that \eqref{Fstar-bound} holds for such $x$ and $x'$. In addition, notice from $\theta \in [1/2, 1)$ and \eqref{Phi_smooth_constants} that $\nu \in (0, 1]$. By this and \cite[Lemma 2]{nesterov2015universal}, one has
\[
M(1+\nu)^{-1} \|x - x'\|^{1+\nu} \leq \big( \delta^{\frac{\nu - 1}{1 + \nu}} M^{\frac{2}{1 + \nu}} \|x - x'\|^2 + \delta\big)/2 \qquad \forall \delta>0.
\]
Combining this inequality with \eqref{Fstar-bound}, and using the fact $\Psi(\cdot)=F^*(\cdot)+p(\cdot)$, we obtain that
\begin{align}
&F^*(x) \leq F^*(x') + \langle \nabla^\rC_\cX F^*(x'), x - x' \rangle + \frac{1}{2} \big(L_{\nabla f} + \delta^{\frac{\nu - 1}{1 + \nu}} M^{\frac{2}{1 + \nu}}\big) \|x - x'\|^2 + \frac{\delta}{2} \quad \forall x\in\cX\cap \cB(x',r), \label{sec4-Fstar-bound} \\
&\Psi(x) \leq F^*(x') + \langle \nabla^\rC_\cX F^*(x'), x - x' \rangle + \frac{1}{2} \big(L_{\nabla f} + \delta^{\frac{\nu - 1}{1 + \nu}} M^{\frac{2}{1 + \nu}}\big) \|x - x'\|^2 +p(x)+ \frac{\delta}{2} \quad \forall x\in\cX\cap \cB(x',r). \nn
\end{align}
As a result, when $x'\in\cX$ is not an $(r,\epsilon)$-stationary point of \eqref{intro_problem}, $\Psi$ is bounded above by a much simpler function that is the sum of a simple quadratic function and $p(\cdot)$ in a neighborhood of $x'$.

Based on the above observation, it is natural to propose a proximal gradient (PG) type-method for finding an $(r,\epsilon)$-stationary point of problem \eqref{intro_problem}. The method generates the sequence $\{x^k\}$ according to
\beq \label{prox-grad}
x^{k+1}=\mathop{\arg\min}_{x \in \cB(x^k, r)} \Big\{\langle \nabla^\rC_\cX F^*(x^k), x  \rangle + \frac{1}{2} L_k \|x - x^k\|^2 +p(x)\Big\}
\eeq
with $L_k= L_{\nabla f} + \delta_k^{(\nu-1)/(1+\nu)}M^{2/(1+\nu)}$ for a suitable choice of $\delta_k>0$, 
and terminates when $x^k$ is an $(r,\epsilon)$-stationary point of~\eqref{intro_problem} for some $k \geq 0$. However, this method faces a practical limitation: the exact value of $\nabla^\rC_\cX F^*(x^k)$ is typically unavailable, since $F^*$ is a maximal function.

To address this issue, we propose an inexact PG method for solving problem~\eqref{intro_problem}. Specifically, we replace $\nabla^\rC_\cX F^*(x^k)$ in~\eqref{prox-grad} with its approximation $\nabla_x f(x^k, y^k)$. Here, $y^k$ is an approximate solution to the subproblem $\max_y \{ f(x^k, y) - q(y) \}$, or equivalently, $\min_y \{ -f(x^k, y) + q(y) \}$, obtained via Algorithm~\ref{algo_implement:subsolver} (see lines~4 and~5 of Algorithm~\ref{algo_implement:whole}).

We now present an inexact PG method for solving problem \eqref{intro_problem}. 

\begin{algorithm}[H]
\caption{An inexact proximal gradient method for problem \eqref{intro_problem}}\label{algo_implement:whole}
\begin{algorithmic}[1]
\REQUIRE $L_f$, $L_{\nabla f}$, $C$, $\theta$, $\gamma$, $\sigma$ from Assumption~\ref{ass:Lip_Smo_KL}; $\epsilon > 0$, $\overline{\lambda} > 0$, $\rho \in (0, 1)$, and $(x^0, y^0) \in \cX \times \cY$ satisfying $F^*(x^0) - F(x^0, y^0) \le \min\{\gamma\epsilon^\sigma/2, 1\}$.

\STATE Set $r=\gamma \epsilon^\sigma / (4L_f)$, $\underline{\lambda} = \min\{\rho/L_{\nabla f}, \overline{\lambda}\}$, $M = (1-\theta)^{-1} C^{-1/\theta} L_{\nabla f}^{1/\theta}$, $\nu = \theta^{-1}(1-\theta)$.
\FOR{$k=0,1,2,\dots$}
    \STATE Set $\delta_k = 1/(k+1)$, $\eta_{k} = 1/(k+1)$,  $L_k= L_{\nabla f} + \delta_k^{(\nu-1)/(1+\nu)}M^{2/(1+\nu)}$.
    \STATE Compute 
    \beq \label{step:constrained_PG}
        x^{k+1} = \mathop{\arg\min}_{x \in \cB(x^k, r)} \Big\{ \langle \nabla_x f(x^{k}, y^{k}), x \rangle + \frac{L_k}{2} \|x - x^k\|^2 + p(x) \Big\}.
    \eeq
    \STATE Call Algorithm~\ref{algo_implement:subsolver} with $g(\cdot) \gets -f(x^{k+1}, \cdot)$, $q(\cdot) \gets q(\cdot)$, $\overline{\lambda} \gets \overline{\lambda}$, $\rho \gets \rho$, $z^0 \gets y^k$, 
    $\tau \gets \frac{C}{L_{\nabla f}+\underline{\lambda}^{-1}} \min\Big\{ (\tfrac{1}{2} \gamma \epsilon^\sigma)^{\theta}, \eta_{k+1}^{\frac{\theta}{2(1-\theta)}} \Big\}$, and denote its output as $y^{k+1}$.
\ENDFOR
\end{algorithmic}
\end{algorithm}

\begin{remark}
(i) As for the initial point $(x^0, y^0)$, Algorithm~\ref{algo_implement:whole} requires that $y^0$ be a nearly optimal solution to the problem $\max_y F(x^0, y)$. Such a pair $(x^0, y^0)$ can often be found in practice. For example, if $x^0 \in \mathcal{X}$ is such that $F(x^0, y)$ is concave in $y$, or satisfies a global or local KL condition over a known neighborhood, then $y^0$ can be obtained by applying a first-order method to the problem $\max_y F(x^0, y)$.

(ii) Some of the input parameters required by Algorithm~\ref{algo_implement:whole} may not be available in practice. It would therefore be worthwhile to develop a parameter-free or adaptive variant of Algorithm~\ref{algo_implement:whole} in future work. Alternatively, in practical implementations, one may run the algorithm with a range of trial parameter values and adjust them until the algorithm’s performance stabilizes.
\end{remark}

The following theorem establishes an iteration complexity bound for Algorithm~\ref{algo_implement:whole} to compute a $(\gamma \epsilon^\sigma/(4L_f),\epsilon)$-stationary point of problem~\eqref{intro_problem} for any $\epsilon \in (0, 1/e]$. The proof is deferred to Subsection~\ref{sec:proof3}.

\begin{theorem} \label{thm:outer_bound}
Let $L_f, L_{\nabla f}, C, \theta, \gamma, \sigma$ be given in Assumption~\ref{ass:Lip_Smo_KL}, $M, \nu$ be defined in \eqref{Phi_smooth_constants}, $\epsilon$ be given in Algorithm~\ref{algo_implement:whole}, and
\begin{align}
& A = (1-\theta)^{-2} C^{-2}L_{\nabla f}^2, \quad \underline{L} = L_{\nabla f} + M^{2/(1+\nu)}, \label{thm5_A_underL}\\
& a = 8 \big( \Psi(x^0) - \Psi^* + 3 + 2A\underline{L}^{-1} \big), \quad b = 8 \big(3/2 + A\underline{L}^{-1}\big), \label{thm5_ab} \\
& \widehat{C}_1 = \Big( 36(1+\nu)\nu^{-1}b\underline{L} \lceil \log(18 (1+\nu)\nu^{-1}b\underline{L}) \rceil_+ + 72(1+\nu)\nu^{-1}b\underline{L} + 1 \Big)^{\frac{1+\nu}{2\nu}}, \label{thm5_C1} \\
& \widehat{C}_2 = \Big( \frac{4b(1+\nu)(3M)^{2/\nu}}{M^{2/(1+\nu)}} \Big\lceil\! \log\Big(\frac{2b(1+\nu)(3M)^{2/\nu}}{M^{2/(1+\nu)}}\Big) \!\Big\rceil_+ + \frac{8b(1+\nu)(3M)^{2/\nu}}{\nu M^{2/(1+\nu)}} + 1 \Big)^{\frac{1+\nu}{2}}, \label{thm5_C2} \\
& \widehat{C}_3 = \big( 36\underline{L} a \big)^{\frac{1+\nu}{2\nu}} + M^{-1} \big( 4a(3M)^{2/\nu} \big)^{\frac{1+\nu}{2}}, \quad \widehat{C}_4 = 72A, \label{thm5_C3C4}\\
& \widehat{C}_5 = \Big( \frac{144(1+\nu)bL_{\nabla f}^2}{M^{2/(1+\nu)}} \Big\lceil\! \log\Big(\frac{72(1+\nu)bL_{\nabla f}^2}{M^{2/(1+\nu)}}\Big) \!\Big\rceil_+ + \frac{288(1+\nu)bL_{\nabla f}^2}{M^{2/(1+\nu)}} + 1 \Big)^{\frac{1+\nu}{2}}, \label{thm5_C5} \\
& \widehat{C}_6 = (144aL_{\nabla f}^2)^{\frac{1+\nu}{2}} / M, \label{thm5_C6} \\
& \widehat{C}_7 = \Big( \frac{64(1+\nu)bL_f^2}{\gamma^2 M^{2/(1+\nu)}} \Big\lceil\! \log\Big(\frac{32(1+\nu)bL_f^2}{\gamma^2 M^{2/(1+\nu)}}\Big) \!\Big\rceil_+ + \frac{128\sigma(1+\nu)bL_f^2}{\gamma^2 M^{2/(1+\nu)}} + 1 \Big)^{\frac{1+\nu}{2}}, \label{thm5_C7} \\
& \widehat{C}_8 = (64aL_f^2)^{\frac{1+\nu}{2}} / (\gamma^{1+\nu} M), \label{thm5_C8} \\
&\widehat{K}_\epsilon = \Big \lceil \widehat{C}_1 \epsilon^{-\frac{1+\nu}{\nu}} (\log \epsilon^{-1})^{\frac{1+\nu}{2\nu}} + \widehat{C}_2 \epsilon^{-\frac{1+\nu}{\nu}} (\log \epsilon^{-1})^{\frac{1+\nu}{2}} + \widehat{C}_3 \epsilon^{-\frac{1+\nu}{\nu}} + \widehat{C}_4 \epsilon^{-2} \nn \\ 
&\qquad + \widehat{C}_5 \epsilon^{-(1+\nu)} (\log \epsilon^{-1})^{\frac{1+\nu}{2}} + \widehat{C}_6 \epsilon^{-(1+\nu)} + \widehat{C}_7 \epsilon^{-(1+\nu)\sigma} (\log \epsilon^{-1})^{\frac{1+\nu}{2}} + \widehat{C}_8 \epsilon^{-(1+\nu)\sigma} \Big \rceil. \label{K-eps-hat}
\end{align}
Suppose that $\epsilon \in (0, 1/e]$. Then Algorithm~\ref{algo_implement:whole} generates a pair $(x^k, y^k)$ in at most $\widehat{K}_\epsilon$ iterations such that $x^k$ is a $(\gamma  \epsilon^\sigma/(4L_f),\epsilon)$-stationary point of problem~\eqref{intro_problem} (or equivalently the problem $\min_x \Psi(x)$), and $y^k$ satisfies
\beq \label{yk-opt}
F^*(x^k) - F(x^k, y^k) \leq \min\Big\{\frac{\gamma \epsilon^\sigma}{2}, \frac{1}{k+1} \Big\}, \quad 
\operatorname{dist}\big(y^k, Y^*(x^k)\big) \leq \frac{1}{C(1-\theta)} \min\Big\{\Big(\frac{\gamma}{2}\Big)^{(1-\theta)} \epsilon^{\sigma (1-\theta)}, \frac{1}{\sqrt{k+1}} \Big\}.
\eeq
\end{theorem}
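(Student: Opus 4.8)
I would argue by contradiction: assume none of the iterates $x^0,\dots,x^{\widehat K_\epsilon}$ generated by Algorithm~\ref{algo_implement:whole} is an $(\epsilon,\gamma\epsilon^\sigma/(4L_f))$-stationary point of \eqref{intro_problem}, i.e.\ $x^k\in\cX^\rc_\epsilon$ for every $k\le\widehat K_\epsilon$, and derive a contradiction; the bound \eqref{yk-opt} for the produced pair then comes for free. The three ingredients are: (a) an induction showing that the inner iterates $y^k$ retain the accuracy claimed in \eqref{yk-opt}; (b) a descent inequality for $\Psi$ along $\{x^k\}$ obtained from the quadratic majorization \eqref{Fstar-bound} and the optimality of the proximal step \eqref{step:constrained_PG}; and (c) a pigeonhole argument over the \emph{late} iterations producing an $\epsilon$-stationary point.

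\textbf{Ingredient (a).} I would prove \eqref{yk-opt} by induction on $k$, the base case being precisely the initialization hypothesis on $(x^0,y^0)$ (note $1/(0+1)=1$, $1/\sqrt{0+1}=1$). For the step, since $x^{k+1}\in\cB(x^k,r)$ with $r=\gamma\epsilon^\sigma/(4L_f)$, the $L_f$-Lipschitz continuity of $f(\cdot,y)$ and of $F^*$ (Lemma~\ref{lem:Lips}) together with the inductive bound $F^*(x^k)-F(x^k,y^k)\le\gamma\epsilon^\sigma/2$ give $F^*(x^{k+1})-F(x^{k+1},y^k)\le\gamma\epsilon^\sigma$. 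Because $x^{k+1}\in\cX^\rc_\epsilon$ forces $\dist(0,\partial\Psi(x^{k+1}))>\epsilon$, the subproblem $\min_y\{-f(x^{k+1},y)+q(y)\}$ solved in line~5 satisfies the KL condition \eqref{KL_subpr} with the same $C,\theta$ and radius $\delta=\gamma\,\dist(0,\partial\Psi(x^{k+1}))^\sigma\ge\gamma\epsilon^\sigma$, and $z^0=y^k$ is an admissible warm start. Theorem~\ref{thm:subsolver_iters} (with $L=L_{\nabla f}$) then yields $F^*(x^{k+1})-F(x^{k+1},y^{k+1})\le\big(C^{-1}(L_{\nabla f}+\underline{\lambda}^{-1})\tau\big)^{1/\theta}=\min\{\gamma\epsilon^\sigma/2,\ \eta_{k+1}^{1/(2(1-\theta))}\}$ by the prescribed $\tau$; since $\theta\ge 1/2$ gives $\tfrac{1}{2(1-\theta)}\ge 1$, the right side is at most $\min\{\gamma\epsilon^\sigma/2,\ 1/(k+2)\}$, which is the first half of \eqref{yk-opt}. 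Raising this to the power $1-\theta$, using the error bound $\dist(y,Y^*(x))\le(C(1-\theta))^{-1}(F^*(x)-F(x,y))^{1-\theta}$ implied by \eqref{KL_y} (the same bound underlying the proof of Theorem~\ref{thm:Phi_smooth}), and using $1-\theta\le 1/2$, gives the second half.

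\textbf{Ingredient (b).} Since $x^k\in\cX^\rc_\epsilon$, the segment $[x^k,x^{k+1}]$ lies in $\cU_\epsilon$, so $F^*$ is differentiable there with $\nabla F^*(x^k)=\nabla_x f(x^k,y^*)$ (Theorem~\ref{thm:Phi_smooth}), and \eqref{Fstar-bound} applies with $\delta=\delta_k$. Writing $e_k:=\|\nabla_x f(x^k,y^k)-\nabla F^*(x^k)\|\le L_{\nabla f}\dist(y^k,Y^*(x^k))$, combining \eqref{Fstar-bound} with the optimality of \eqref{step:constrained_PG} tested at $x=x^k$ and bounding $e_k\|x^{k+1}-x^k\|$ by Young's inequality yields
\[
\Psi(x^{k+1})\le\Psi(x^k)-\tfrac14 L_k\|x^{k+1}-x^k\|^2+e_k^2/L_k+\delta_k/2 .
\]
Using $e_k^2\le A/(k+1)$ from \eqref{yk-opt}, $L_k\ge\underline L$ (valid because $\delta_k\le1$ and $\nu\le1$), $\delta_k=1/(k+1)$, and $\sum_{k<K}1/(k+1)\le1+\ln K$, telescoping over $k=0,\dots,K-1$ produces $\sum_{k=0}^{K-1}L_k\|x^{k+1}-x^k\|^2\le a+b\ln K$ after the constant accounting in \eqref{thm5_ab}.

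\textbf{Ingredient (c) and the main obstacle.} Restricting that sum to $k\in\{\lceil K/2\rceil,\dots,K-1\}$ gives some $k^\star$ in that range with $L_{k^\star}\|x^{k^\star+1}-x^{k^\star}\|^2\le 4(a+b\ln K)/K$. This late choice is used twice: it keeps $e_{k^\star}\le L_{\nabla f}/(C(1-\theta)\sqrt{k^\star+1})$ small (this produces the $\widehat C_4\epsilon^{-2}$ term), and it provides the \emph{growing} lower bound $L_{k^\star}\ge M^{2/(1+\nu)}(K/2)^{(1-\nu)/(1+\nu)}$, which is exactly what upgrades the naive $\epsilon^{-2}$, $\epsilon^{-2\sigma}$ scalings into the sharper $\epsilon^{-(1+\nu)}=\epsilon^{-1/\theta}$, $\epsilon^{-(1+\nu)\sigma}=\epsilon^{-\sigma/\theta}$ ones; once $K$ is large this same bound forces $\|x^{k^\star+1}-x^{k^\star}\|<r$, so $x^{k^\star+1}$ is interior to $\cB(x^{k^\star},r)$ and the normal-cone term drops from the optimality condition of \eqref{step:constrained_PG}, giving, via Theorem~\ref{thm:Phi_smooth},
\[
\dist(0,\partial\Psi(x^{k^\star+1}))\le (L_{k^\star}+L_{\nabla f})\|x^{k^\star+1}-x^{k^\star}\|+M\|x^{k^\star+1}-x^{k^\star}\|^\nu+e_{k^\star}.
\]
Estimating each term via $L_{k^\star}\le L_{\nabla f}+M^{2/(1+\nu)}K^{(1-\nu)/(1+\nu)}$, $k^\star\ge K/2$, and the pigeonhole bound, and demanding each be $\le\epsilon/3$, reduces to self-referential inequalities $K\ge c\,(a+b\ln K)^{p}\epsilon^{-s}$ whose resolution yields the eight additive terms of \eqref{K-eps-hat}; hence at $K=\widehat K_\epsilon$ one gets $\dist(0,\partial\Psi(x^{k^\star+1}))\le\epsilon$, contradicting $x^{k^\star+1}\in\cX^\rc_\epsilon$, so some $x^k$, $k\le\widehat K_\epsilon$, is the desired stationary point, and \eqref{yk-opt} holds for it by ingredient (a). I expect the real difficulty to be ingredient (a): because $\cL(x)$ may shrink as $x$ moves, one must certify that the warm start $y^k$ still lies in the KL region of the \emph{next} subproblem — this is precisely why the $x$-step is confined to a ball of the $\epsilon$-dependent radius $r=\Theta(\epsilon^\sigma)$, and why nonstationarity of $x^{k+1}$ (the very property we wish to witness) must be invoked to guarantee $\delta\ge\gamma\epsilon^\sigma$. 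The secondary delicate point is the bookkeeping in (c), where the deliberate growth of $L_k$ like $(k+1)^{(1-\nu)/(1+\nu)}$, the $O(\log K)$ telescoped budget, and the pigeonhole over late iterations must be balanced; nailing down $\widehat C_1,\dots,\widehat C_8$ is routine but lengthy.
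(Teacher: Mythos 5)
Your proposal is correct and follows essentially the same route as the paper: a contradiction argument assuming no iterate up to $\widehat K_\epsilon$ lands in $\cX\setminus\cX^\rc_\epsilon$; an induction (the paper's Lemma~\ref{lem:subsolver_output}) showing the warm‑started inner solver stays in the shrinking KL region and yields \eqref{yk-opt}; the telescoped descent estimate with the growing Lipschitz proxy $L_k$ (the paper's Lemma~\ref{lem:gen_converge}, $\Delta_k$); a pigeonhole over the late indices $\lceil k/2\rceil,\dots,k$; the observation that $\|x^{\hat k+1}-x^{\hat k}\|<r$ so the ball constraint in \eqref{step:constrained_PG} is inactive; and a final resolution of the self‑referential inequalities $K\gtrsim(a+b\log K)^p\epsilon^{-s}$ via the paper's Lemma~\ref{lem:bound-extended}. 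The only differences are cosmetic bookkeeping: you telescope directly in $\Psi$ rather than in $F(\cdot,y^\ell)+p(\cdot)$ (which in fact drops the extra $\eta_\ell$ and $\eta_{k+1}$ terms from $\Delta_k$, giving a slightly tighter constant), and you allot $\epsilon/3$ to each of three terms whereas the paper splits the $(L_{\hat k}+L_{\nabla f})\|\cdot\|$ piece into two to match the displayed constants $\widehat C_1,\dots,\widehat C_8$; neither difference changes the argument.
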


\begin{remark} \label{rmk_iter_complexity}
In view of Theorem~\ref{thm:outer_bound}, Algorithm~\ref{algo_implement:whole} enjoys an iteration complexity of 
\[
\widetilde{\mathcal{O}}\big(\max\big\{\epsilon^{-\frac{1}{1-\theta}},\,(L_f/\gamma)^{\frac{1}{\theta}}\epsilon^{-\frac{\sigma}{\theta}}\big\} \big)
\] 
for computing a $\bigl(\gamma \epsilon^\sigma / (4L_f),\epsilon\bigr)$-stationary point of problem~\eqref{intro_problem}.
\end{remark}

The next result presents a \emph{first-order oracle complexity} bound for Algorithm~\ref{algo_implement:whole}, measured by the number of evaluations of the gradient $\nabla f$, required to generate a $(\gamma \epsilon^\sigma / (4L_f),\epsilon)$-stationary point of problem~\eqref{intro_problem} for any $\epsilon \in (0, 1/e]$. The proof is deferred to Subsection~\ref{sec:proof3}.

\begin{theorem} \label{thm:operations}
Let $\epsilon \in (0, 1/e]$ be given, $\widehat{K}_\epsilon$ be defined in Theorem~\ref{thm:outer_bound}, $L_{\nabla f}, C, \theta, \gamma, \sigma$ be given in Assumption~\ref{ass:Lip_Smo_KL}, $M, \nu$ be defined in \eqref{Phi_smooth_constants}, $\rho, \overline{\lambda}, \underline{\lambda}$ be given in Algorithm~\ref{algo_implement:whole},
and let
\begin{align}
&\underline{\beta_f} = \frac{C^2}{2\overline{\lambda}} (L_{\nabla f} + \underline{\lambda}^{-1})^{-2}, \quad
\overline{\beta_f} = \frac{C^2}{2\underline{\lambda}} (L_{\nabla f} + \overline{\lambda}^{-1})^{-2}, \nn \\
& C_f' = \min \Big\{ \frac{1}{2}, \frac{(2^{\frac{2\theta-1}{2\theta}} - 1)(\gamma\epsilon^\sigma)^{1-2\theta}}{(2\theta - 1)\overline{\beta_f}} \Big\}, \quad
\Lambda = \max\Big\{ ( \tfrac{1}{2} \gamma \epsilon^\sigma )^{-2\theta}, (\widehat{K}_\epsilon+1)^{\frac{\theta}{1-\theta}} \Big\}, \nn \\
&\overline{K}_{f, \theta} =
\begin{cases}
\left\lceil \frac{1+\underline{\beta_f}}{\underline{\beta_f}} \log(2\overline{\lambda}C^{-2}(L_{\nabla f}+\underline{\lambda}^{-1})^2 \gamma \epsilon^\sigma \Lambda) \right\rceil_+ + 1 & \text{if } \theta = \tfrac{1}{2}, \\[1ex]
\left\lceil \frac{1}{C_f'(2\theta-1)\underline{\beta_f}} \left( 2\overline{\lambda}C^{-2}(L_{\nabla f}+\underline{\lambda}^{-1})^2 \Lambda \right)^{2\theta-1} \right\rceil + 1 & \text{if } \theta \in (\tfrac{1}{2}, 1),
\end{cases} \label{cor1_Kover} \\
&\widehat{N}_\epsilon = \widehat{K}_\epsilon \Big( \Big\lceil \frac{\log (2L_{\nabla f} \overline{\lambda})}{\log \rho^{-1}} \Big\rceil_+ + 1 \Big) \overline{K}_{f, \theta}. \label{cor1_Neps}
\end{align}
Then the total number of evaluations of the proximal operators of $p$ and $q$, and the gradient $\nabla f$ performed by Algorithm~\ref{algo_implement:whole} is at most $\widehat{K}_\epsilon$, $\widehat{N}_\epsilon$, and $\widehat{K}_\epsilon + \widehat{N}_\epsilon$, respectively, to generate a pair $(x^k, y^k)$ such that $x^k$ is a $(\gamma \epsilon^\sigma/(4L_f),\epsilon)$-stationary point of problem~\eqref{intro_problem}, and $y^k$ satisfies
\eqref{yk-opt}.
\end{theorem}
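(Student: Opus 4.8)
The plan is to bound the total gradient‐evaluation count by multiplying the outer iteration bound $\widehat K_\epsilon$ from Theorem~\ref{thm:outer_bound} by a uniform bound on the work done inside each call to Algorithm~\ref{algo_implement:subsolver} in line~5. Each outer iteration $k$ contributes exactly one evaluation of $\nabla_x f(x^k,y^k)$ (for the $x$-update in \eqref{step:constrained_PG}) plus one evaluation of the proximal operator of $p$, accounting for the claimed $\widehat K_\epsilon$ prox-$p$ calls and for the additive $\widehat K_\epsilon$ term in the $\nabla f$ count. The remaining gradient evaluations and all prox-$q$ evaluations come from the inner solver. So the heart of the argument is: (1) bound the number of \emph{outer} iterations of Algorithm~\ref{algo_implement:subsolver} by a quantity independent of $k$; and (2) bound the number of \emph{inner} (line-search) iterations per outer iteration of Algorithm~\ref{algo_implement:subsolver}, again uniformly in $k$.

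For step (2), I would invoke Theorem~\ref{thm:sub_LS} directly with $g(\cdot)\gets -f(x^{k+1},\cdot)$, whose smoothness constant is $L_{\nabla f}$ by Assumption~\ref{ass:Lip_Smo_KL}(i): the number of inner iterations at each outer iteration of the subsolver is at most $\lceil \log(L_{\nabla f}\overline\lambda)/\log\rho^{-1}\rceil_+ + 1$, which is exactly the middle factor in \eqref{cor1_Neps}. For step (1), I would apply Theorem~\ref{thm:subsolver_iters} to the subproblem $\min_y\{-f(x^{k+1},y)+q(y)\}$. Here the KL condition \eqref{KL_subpr} holds with the level $\delta$ replaced by $\gamma\epsilon^\sigma$ (or the relevant local-KL radius at $x^{k+1}$, which along the iterate sequence is at least $\tfrac12\gamma\epsilon^\sigma$ — this is where I would lean on the guarantees \eqref{yk-opt} and the structure built into Algorithm~\ref{algo_implement:whole}'s initialization and step sizes, ensuring the warm start $y^k$ lies in the required sublevel set). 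The outer-iteration bound $\overline K_\theta$ of Theorem~\ref{thm:subsolver_iters} depends on $\delta$ and on the stopping tolerance $\tau$; substituting $\delta = \gamma\epsilon^\sigma$ and the choice $\tau = \frac{C}{L_{\nabla f}+\underline\lambda^{-1}}\min\{(\tfrac12\gamma\epsilon^\sigma)^\theta,\ \eta_{k+1}^{\theta/(2(1-\theta))}\}$ from line~5, and using $\eta_{k+1} = 1/(k+2) \ge 1/(\widehat K_\epsilon+1)$ for $k<\widehat K_\epsilon$, one gets $2\overline\lambda\tau^{-2} \le 2\overline\lambda C^{-2}(L_{\nabla f}+\underline\lambda^{-1})^2\,\Lambda$ with $\Lambda$ as defined in \eqref{cor1_Kover}. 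Plugging this into the two cases of \eqref{cor1_K} (with $\underline\beta,\overline\beta,C'$ specialized to $\underline{\beta_f},\overline{\beta_f},C_f'$) yields precisely the bound $\overline K_{f,\theta}$, uniformly over all outer iterations $k<\widehat K_\epsilon$.

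Combining, the total number of subsolver prox-$q$ evaluations — equivalently inner line-search steps, since each produces one $z^{k+1,i}$ via a prox — is at most $\widehat K_\epsilon\cdot(\lceil\log(2L_{\nabla f}\overline\lambda)/\log\rho^{-1}\rceil_+ + 1)\cdot\overline K_{f,\theta} = \widehat N_\epsilon$; I'd also note each inner line-search step uses one evaluation of $\nabla g = -\nabla_y f$, which is computed once per outer subsolver iteration and reused, so the $\nabla f$ count inside the subsolver is also bounded by $\widehat N_\epsilon$, giving the total $\widehat K_\epsilon + \widehat N_\epsilon$. Finally, the assertion that the returned $(x^k,y^k)$ is an $(\epsilon,\gamma\epsilon^\sigma/(4L_f))$-stationary point satisfying \eqref{yk-opt} is inherited verbatim from Theorem~\ref{thm:outer_bound}. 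The main obstacle I anticipate is step (1): one must be careful that the local-KL level set $\mathcal L(x^{k+1})$ genuinely contains the region in which the subsolver operates, so that \eqref{KL_subpr} is applicable with $\delta$ of order $\gamma\epsilon^\sigma$ rather than something $k$-dependent that degrades. This requires threading together the warm-start bound on $\operatorname{dist}(y^k,Y^*(x^{k+1}))$, the step-size cap $r=\gamma\epsilon^\sigma/(4L_f)$ on $\|x^{k+1}-x^k\|$, and the $L_f$-Lipschitz continuity of $F$ in $x$ to control $F^*(x^{k+1})-F(x^{k+1},y^k)$ — essentially the same bookkeeping underlying Theorem~\ref{thm:outer_bound}, which I would cite rather than redo.
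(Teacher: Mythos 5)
Your proposal is correct and follows essentially the same route as the paper: one prox-$p$ (and one $\nabla_x f$) per outer iteration bounded by $\widehat K_\epsilon$ from Theorem~\ref{thm:outer_bound}; the $\delta=\gamma\epsilon^\sigma$ level for \eqref{KL_subpr} on the subproblem established via the same warm-start/KL-level bookkeeping used in the proof of Theorem~\ref{thm:outer_bound}; Theorem~\ref{thm:subsolver_iters} with $\tau$ from line~5 and $\eta_{k+1}\ge(\widehat K_\epsilon+1)^{-1}$ to get the uniform outer bound $\overline K_{f,\theta}$; and Theorem~\ref{thm:sub_LS} for the line-search factor. One small inaccuracy: Theorem~\ref{thm:sub_LS} with $L=L_{\nabla f}$ gives $\lceil\log(L_{\nabla f}\overline\lambda)/\log\rho^{-1}\rceil_+ + 1$, which is a (valid, slightly tighter) bound than the $\lceil\log(2L_{\nabla f}\overline\lambda)/\log\rho^{-1}\rceil_+ + 1$ appearing in \eqref{cor1_Neps}, so your claim that the two agree "exactly" is off by that harmless factor of $2$ inside the logarithm.
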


\begin{remark} \label{rmk_complexity}
From Theorem~\ref{thm:operations}, we observe that, in order to compute a
$(\gamma \epsilon^\sigma /(4L_f),\epsilon)$-stationary point of
problem~\eqref{intro_problem}, Algorithm~\ref{algo_implement:whole} requires
$\widetilde{\mathcal{O}}\big(\max\big\{\epsilon^{-\frac{1}{1-\theta}},\,(L_f/\gamma)^{\frac{1}{\theta}}\epsilon^{-\frac{\sigma}{\theta}}\big\} \big)$ 
evaluations of the proximal operator of $p$. Moreover, the numbers of evaluations
of the proximal operator of $q$ and the gradient $\nabla f$ are given as follows:
{\small
\[
\left\{
\begin{array}{ll}
\widetilde{\mathcal{O}}\!\left( \max\left\{\epsilon^{-2},\,(L_f/\gamma)^2 \epsilon^{-2\sigma}\right\} \right)
& \mbox{if } \theta=\frac{1}{2}, \\[8pt]
\widetilde{\mathcal{O}}\!\left(
\begin{array}{l}
\max\Big\{(\gamma\epsilon^\sigma)^{-2\theta(2\theta-1)}\epsilon^{-\frac{1}{1-\theta}},\,
(L_f/\gamma)^{\frac{1}{\theta}}(\gamma\epsilon^\sigma)^{-2\theta(2\theta-1)}\epsilon^{-\frac{\sigma}{\theta}}, \\[4pt]
\qquad \epsilon^{-\frac{2\theta^2-2\theta+1}{(1-\theta)^2}},\,
(L_f/\gamma)^{\frac{2\theta^2-2\theta+1}{\theta(1-\theta)}}\epsilon^{-\frac{\sigma(2\theta^2-2\theta+1)}{\theta(1-\theta)}}\Big\}
\end{array}
\right)
& \mbox{if } \theta \in (\frac{1}{2},1).
\end{array}
\right.
\]
}

\end{remark}

\section{Numerical results} \label{sec:Numerical}

In this section, we conduct preliminary experiments to evaluate the performance of our proposed method (Algorithm~\ref{algo_implement:whole}).

Consider the following minimax optimization problem:
\begin{equation} \label{numerical_example}
\min_{\|x\| \leq 1} \, \max_{\|y\|_\infty \leq 2} \; \big\{0.01\|x\|_1 - \| (y + A x) \odot (y + B x) \|^2 + 0.01\,\|x - c\|^2 - 0.1\|y\|_1\big\},
\end{equation}
where $A, B \in \mathbb{R}^{m \times n}$, $c \in \mathbb{R}^n$, and $\odot$ denotes the Hadamard (elementwise) product.

For each pair $(m, n)$, we randomly generate 10 instances of problem~\eqref{numerical_example} by sampling the entries of $A$, $B$, and $c$ independently from the standard normal distribution $\mathcal{N}(0,1)$. Note that problem~\eqref{numerical_example} is a special case of problem~\eqref{intro_problem} with $f(x,y) = -\| (y + A x) \odot (y + B x) \|^2 + 0.01\|x - c\|^2$, $p(x) = 0.01\|x\|_1 + \mathcal{I}_{\cB(0,1)}(x)$, and $q(y) = 0.1\|y\|_1 + \mathcal{I}_{[-2, 2]^m}(y)$, 
where $\mathcal{I}_{\cB(0,1)}$ and $\mathcal{I}_{[-2, 2]^m}$ denote the indicator functions of the unit Euclidean ball $\cB(0,1)$ and the $m$-dimensional box $[-2, 2]^m$, respectively.

In order to apply Algorithm~\ref{algo_implement:whole} to solve problem~\eqref{numerical_example}, we need to estimate the Lipschitz constant $L_f$ of $f(\cdot,y)$ and the Lipschitz constant $L_{\nabla f}$ of $\nabla f$ over the set $\cX \times \cY$, where $\cX = \cB(0, 1)$ and $\cY = [-2, 2]^m$. To this end, let $(a^i)^T$ and $(b^i)^T$ denote the $i$th row vectors of $A$ and $B$, respectively, and define $u = y + A x$, $v = y + B x$, and $w = u \, \odot \, v$. Then we obtain that $f(x, y) = -\sum_{i=1}^m w_i^2 + 0.01 \|x - c\|^2$, and
\beq \label{numerical_gradHess}
\begin{aligned}
&\nabla_x f(x, y) = -2 \sum_{i=1}^m w_i (v_i a^i + u_i b^i) + 0.02(x - c), \quad \nabla_y f(x, y) = -2\,(u + v) \odot w, \\
&\nabla^2_{xx} f(x, y) = -2 \sum_{i=1}^m \big[ (v_i a^i + u_i b^i)(v_i a^i + u_i b^i)^T + w_i \big(a^i (b^i)^T + b^i (a^i)^T\big) \big] + 0.02 I, \\
&\nabla^2_{xy} f(x, y) = -2 \big[ A^T \diag(v^2 + 2\,u \odot v) + B^T \diag(u^2 + 2\,u \odot v) \big], \\
&\nabla^2_{yy} f(x, y) = -2\,\diag((u + v)^2 + 2\,u \odot v),
\end{aligned}
\eeq
where $z^2 := z \odot z$ for any vector $z$.  
Let $M_a = \max_i \|a^i\|$, $M_b = \max_i \|b^i\|$, and 
\beq \label{numerical_Lipconsts}
\begin{aligned}
L_f &= 4m (M_a M_b + 2M_a + 2M_b + 4)(M_a M_b + M_a + M_b) + 0.02(1 + \|c\|), \\
L_{\nabla f} &= 4m \big[ 2(M_a M_b + M_a + M_b)^2 + M_a M_b(M_a M_b + 2M_a + 2M_b + 4) \big] \\
&\quad + 2 \big[ \|A\|(M_b + 2)(2M_a + M_b + 6) + \|B\|(M_a + 2)(M_a + 2M_b + 6) \big] \\
&\quad + 2 \big[ (M_a + M_b + 4)^2 + 2(M_a + 2)(M_b + 2) \big] + 0.02,
\end{aligned}
\eeq
where $\|A\|$ and $\|B\|$ denote the spectral norms of $A$ and $B$, respectively. In view of \eqref{numerical_gradHess} and \eqref{numerical_Lipconsts}, one can verify that $L_f \geq \max\limits_{x \in \cX,\, y \in \cY} \|\nabla_x f(x, y)\|$, and 
\[
L_{\nabla f} \geq \max_{x \in \cX,\, y \in \cY} \big\{
\|\nabla^2_{xx} f(x, y)\| + \|\nabla^2_{xy} f(x, y)\| + \|\nabla^2_{yy} f(x, y)\|\big\} \geq \max_{x \in \cX,\, y \in \cY} \|\nabla^2 f(x, y)\|.
\]
It then follows that $f(\cdot, y)$ is $L_f$-Lipschitz continuous on $\cX$ for any fixed $y \in \cY$, and $\nabla f$ is $L_{\nabla f}$-Lipschitz continuous on $\cX \times \cY$.

We now apply Algorithm~\ref{algo_implement:whole} to solve problem~\eqref{numerical_example} on the randomly generated instances described above. The parameters $L_f$ and $L_{\nabla f}$ are computed using~\eqref{numerical_Lipconsts}, while the remaining parameters are set as follows: $C = 0.2$, $\theta = 0.5$, $\gamma = 0.01$, $\sigma = 0.1$, $\overline{\lambda} = 1$, $\rho = 0.95$, and $\epsilon = 10^{-2}$.\footnote{Observe that the inner maximization problem in \eqref{numerical_example} admits a decomposition into $m$ independent one-dimensional minimization subproblems, each of which has an objective function of the form $(z-a)^2(z-b)^2 + 0.1|z| + \delta_{[-2,2]}(z)$ for some $a, b \in \mathbb{R}$. Numerical evidence suggests that such functions satisfy a local KL condition with exponent $\theta = 1/2$. The other parameters were chosen empirically and yield reasonable performance in our experiments.} 
The algorithm is initialized at $(x^0, y^0) = (0, 0)$. Note that for this initialization, $y^0 = \arg\max_{\|y\|_\infty \leq 2} \{f(x^0, y) - 0.1\|y\|_1\}$, making it a suitable starting point for $y$. We run the algorithm for 10{,}000 iterations and return the final output denoted by $(x_\epsilon, y_\epsilon)$. Here, $x_\epsilon$ serves as an approximate solution to the outer minimization problem in~\eqref{numerical_example}, while $y_\epsilon$ is an approximate solution to the inner maximization problem $\max_{\|y\|_\infty \leq 2} \{f(x_\epsilon, y) - 0.1\|y\|_1\}$. 

To evaluate the performance of Algorithm~\ref{algo_implement:whole}, we compute the final objective value of problem~\eqref{numerical_example} by
\[
\Psi(x_\epsilon) = \max_{\|y\|_\infty \leq 2} \{f(x_\epsilon, y) - 0.1\|y\|_1\} + 0.01 \|x_\epsilon\|_1.
\]
Thanks to the separable structure of the problem, this maximization problem can be decomposed into $m$ independent scalar subproblems. Each subproblem is solved using the \texttt{MATLAB} subroutine \texttt{GlobalSearch}, which is a solver for finding global optima of nonconvex problems. 

In addition, we compute an estimate of the final objective value by
\[
\widehat\Psi(x_\epsilon) = f(x_\epsilon, y_\epsilon) - 0.1\|y_\epsilon\|_1 + 0.01\|x_\epsilon\|_1,
\]
using the approximate inner solution $y_\epsilon$ returned by the algorithm.

The computational results on the random instances are presented in Table~\ref{tab:results}. The first two columns list the values of $m$ and $n$. For each pair $(m,n)$, the remaining columns report the averages of the initial objective value, the final objective value $\Psi(x_\epsilon)$, and the estimated final objective value $\widehat\Psi(x_\epsilon) $ over 10 random instances. From the results, we observe that the approximate solution $x_\epsilon$ significantly reduces the objective value compared to the initial point $x^0$, and that $y_\epsilon$ is a good approximate solution to the inner maximization problem $\max_{\|y\|_\infty \leq 2} \{f(x_\epsilon, y) - 0.1\|y\|_1\}$.

\begin{table}[ht]
\centering
\caption{Numerical results for Algorithm~\ref{algo_implement:whole}}
\label{tab:results}
\begin{tabular}{c c c c c}
\toprule
$n$ & $m$ & Initial objective value & Final objective value & Estimated final objective value\\
\midrule
100 & 100 & 1.03 & -224.55 & -224.87 \\
100 & 200 & 0.98 & -228.22 & -228.69 \\
100 & 300 & 1.05 & -260.45 & -261.29 \\
200 & 100 & 1.95 & -808.08 & -808.45 \\
200 & 200 & 2.03 & -816.54 & -817.28 \\
200 & 300 & 1.95 & -837.63 & -838.33 \\
300 & 100 & 2.95 & -1102.26 & -1102.51 \\
300 & 200 & 2.90 & -1082.37 & -1082.71 \\
300 & 300 & 3.03 & -1022.22 & -1022.83 \\
\bottomrule
\end{tabular}
\end{table}

\section{Proof of the main results}\label{sec:proof}
In this section we provide a proof of our main results presented in Sections \ref{sec:Holder},  \ref{sec:subsolver},  and \ref{sec:FOD}, which are particularly Theorems \ref{thm:Phi_smooth}-\ref{thm:operations}.

\subsection{Proof of the main results in Section~\ref{sec:Holder}}\label{sec:proof1}

In this subsection we prove Theorem \ref{thm:Phi_smooth} and Corollary \ref{cor:Fstar-bound}. To proceed, we first establish several technical lemmas below.

The following lemma concerns the Lipschitz continuity of $F^*$ on $\cX$.

\begin{lemma} \label{lem:Lips}
Suppose that Assumption~\ref{ass:Lip_Smo_KL} holds. Then $F^*$ is $L_f$-Lipschitz continuous on $\cX$.
\end{lemma}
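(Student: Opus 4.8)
\textbf{Proof proposal for Lemma~\ref{lem:Lips}.}

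The plan is to establish the Lipschitz continuity of $F^*$ directly from the definition $F^*(x) = \max_y F(x,y) = \max_y \{f(x,y) - q(y)\}$, exploiting the fact that the $y$-dependent term $-q(y)$ is common across different values of $x$. First I would fix arbitrary $x, x' \in \Omega$ and let $y^* \in Y^*(x)$ be a maximizer attaining $F^*(x)$; such a maximizer exists by Assumption~\ref{ass:Lip_Smo_KL}(iii). Then by definition of $F^*(x')$ as a maximum,
\[
F^*(x') \geq F(x', y^*) = f(x', y^*) - q(y^*),
\]
so that
\[
F^*(x) - F^*(x') \leq \big(f(x, y^*) - q(y^*)\big) - \big(f(x', y^*) - q(y^*)\big) = f(x, y^*) - f(x', y^*).
\]
By Assumption~\ref{ass:Lip_Smo_KL}(i), for the fixed $y^* \in \mathcal{Y}$ the function $f(\cdot, y^*)$ is $L_f$-Lipschitz continuous on $\Omega$, hence $f(x, y^*) - f(x', y^*) \leq L_f \|x - x'\|$, giving $F^*(x) - F^*(x') \leq L_f\|x - x'\|$. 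Swapping the roles of $x$ and $x'$ yields the reverse inequality, so $|F^*(x) - F^*(x')| \leq L_f\|x - x'\|$ for all $x, x' \in \Omega$.

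I expect there to be essentially no obstacle here: the argument is the standard "max of Lipschitz functions sharing a common index set is Lipschitz" observation, and the only subtle point is that we need the maximizer set $Y^*(x)$ to be nonempty so that the supremum is attained — which is guaranteed precisely by Assumption~\ref{ass:Lip_Smo_KL}(iii). (If one wished to avoid invoking attainment, one could instead take an $\epsilon$-maximizer $y_\epsilon$ with $F(x, y_\epsilon) \geq F^*(x) - \epsilon$, run the same estimate to get $F^*(x) - F^*(x') \leq L_f\|x-x'\| + \epsilon$, and let $\epsilon \downarrow 0$; but attainment makes this unnecessary.) One should also note that the Lipschitz constant $L_f$ in Assumption~\ref{ass:Lip_Smo_KL}(i) is uniform over $y \in \mathcal{Y}$, which is exactly what allows the bound to be independent of the particular maximizer $y^*$ chosen.
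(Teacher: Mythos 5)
Your proof is correct and takes essentially the same route as the paper: both establish $F^*(x) - F^*(x') \le L_f\|x-x'\|$ from the uniform (in $y$) Lipschitz bound on $f(\cdot,y)$ and then swap $x,x'$. The only cosmetic difference is that you invoke attainment of the maximizer $y^*\in Y^*(x)$, whereas the paper simply takes $\max_y$ on both sides of $F(x,y)\le F(x',y)+L_f\|x-x'\|$, avoiding any appeal to attainment.
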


\begin{proof}
Fix any $x, x^\prime\in \cX$. Recall from Assumption~\ref{ass:Lip_Smo_KL} that $f(\cdot,y)$ is $L_f$-Lipschitz continuous on $\cX$ for any $y\in\cY$. Using this and the expression of $F$ in \eqref{eq:F}, we have 
\[
F(x, y) - F(x^\prime, y) = f(x, y) - f(x^\prime, y) \leq L_f\|x - x^\prime\| \qquad \forall y\in\cY.
\]
This together with the definition of $F^*$ in \eqref{eq:F-star} implies that
\[
F^*(x) \overset{\eqref{eq:F-star}}{=} \max_{y\in\cY} F(x, y) \leq \max_{y\in\cY} F(x^\prime, y) + L_f\|x - x^\prime\| = F^*(x^\prime) + L_f\|x- x^\prime\|,
\]
and hence $F^*(x) - F^*(x^\prime) \leq L_f\|x - x^\prime\|$. Similarly, one can show that $F^*(x^\prime) - F^*(x) \leq L_f\|x^\prime - x\|$. It then follows that $|F^*(x) - F^*(x^\prime)| \leq L_f\|x- x^\prime\|$. By this and the arbitrariness  of  $x, x^\prime\in \cX$, we conclude that $F^*$ is $L_f$-Lipschitz continuous on $\cX$.
\end{proof}

The next result provides a formula for $\nabla F^*(x)$ at a point $x$ where $F^*$ is differentiable.  
\begin{lemma} \label{lem:grad}
Suppose that Assumption~\ref{ass:Lip_Smo_KL} holds and $F^*$ is differentiable at some $x\in\bR^n$.  Then 
$\nabla F^*(x) = \nabla_x f(x, y)$ for all $y \in Y^*(x)$.
\end{lemma}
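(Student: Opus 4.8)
The plan is to run the classical Danskin-type argument, exploiting the variational inequality $F^*(x') \ge F(x',y)$ valid for every $y$. Fix an arbitrary $y^* \in Y^*(x)$; by definition $y^* \in \mathcal{Y} = \dom\,q$ and $F^*(x) = F(x,y^*) = f(x,y^*) - q(y^*)$, and by Assumption~\ref{ass:Lip_Smo_KL}(i) the map $f(\cdot,y^*)$ is differentiable at $x$ (indeed $L_{\nabla f}$-smooth on $\Omega$). Since $F^*$ is differentiable at $x$, it is finite and well-defined on a neighborhood $U \subseteq \Omega$ of $x$, and on $U$ the definition $F^*(x') = \max_y F(x',y) \ge F(x',y^*)$ together with $F^*(x) = F(x,y^*)$ gives
\[
F^*(x') - F^*(x) \ \ge\ F(x',y^*) - F(x,y^*) \ =\ f(x',y^*) - f(x,y^*) \qquad \forall x' \in U.
\]

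Next I would insert first-order expansions on both sides of this inequality. On the right, differentiability of $f(\cdot,y^*)$ at $x$ yields $f(x',y^*) - f(x,y^*) = \langle \nabla_x f(x,y^*),\, x'-x\rangle + o(\|x'-x\|)$; on the left, differentiability of $F^*$ at $x$ yields $F^*(x') - F^*(x) = \langle \nabla F^*(x),\, x'-x\rangle + o(\|x'-x\|)$. Substituting and cancelling the common terms leaves
\[
\langle \nabla F^*(x) - \nabla_x f(x,y^*),\, x'-x\rangle \ \ge\ o(\|x'-x\|) \qquad \text{for all } x' \text{ near } x.
\]

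To turn this one-sided estimate into equality of gradients, I would test it along rays: for a fixed direction $d \in \mathbb{R}^n$, set $x' = x + td$ with $t \downarrow 0$, divide by $t$, and pass to the limit to obtain $\langle \nabla F^*(x) - \nabla_x f(x,y^*),\, d\rangle \ge 0$. Since $d$ ranges over all of $\mathbb{R}^n$ (and $-d$ is equally admissible), this forces $\nabla F^*(x) - \nabla_x f(x,y^*) = 0$. As $y^* \in Y^*(x)$ was arbitrary, the claim follows; a convenient byproduct is that $\nabla_x f(x,y)$ takes the same value for every $y \in Y^*(x)$.

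I do not expect a genuine obstacle here: the only points needing a line of care are (a) justifying that the inequality $F^*(x') \ge F(x',y^*)$ holds on a full neighborhood of $x$ — immediate because differentiability of $F^*$ at $x$ forces $x$ to lie in the interior of $\dom\,F^*$, and $\Omega$ is open — and (b) the standard step that a one-sided directional lower bound holding in every direction forces the underlying vector to vanish. Note that neither the KL condition nor Lipschitz smoothness (beyond plain differentiability of $f(\cdot,y^*)$ at $x$) is needed for this particular lemma; only the envelope structure of $F^*$ and the assumed differentiability of $F^*$ at $x$ are used.
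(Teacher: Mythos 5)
Your proposal is correct and follows essentially the same Danskin-type argument as the paper: fix $y^* \in Y^*(x)$, use the envelope inequality $F^*(x') \ge F(x',y^*)$ together with $F^*(x)=F(x,y^*)$, take directional limits along $x' = x + td$ as $t\downarrow 0$, and conclude $\langle \nabla F^*(x) - \nabla_x f(x,y^*), d\rangle \ge 0$ for all $d$, forcing equality. The paper phrases it directly as a chain of directional-derivative inequalities rather than writing out the Taylor remainders and then passing to rays, but the substance is identical; your closing remark that only the envelope structure and plain differentiability are used (not KL or Lipschitz smoothness) is accurate.
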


\begin{proof}
Fix any $y \in Y^*(x)$ and $d \in \mathbb{R}^n$.  Observe from \eqref{eq:F-star} that $F^*(x)=F(x, y)$ and $F^*(x + td) \geq F(x+td, y)$ for any $t\in\bR$. By these, the differentiability of $F^*$ at $x$,  and the expression of $F$, one has 
    \begin{align*}
        \langle \nabla F^*(x), d \rangle &= \lim_{t \downarrow 0} \frac{F^*(x + t d) - F^*(x)}{t} \geq \lim_{t \downarrow 0} \frac{F(x + t d, y) - F(x, y)}{t} \\
        &= \lim_{t \downarrow 0} \frac{f(x + t d, y) - f(x, y)}{t} = \langle \nabla_x f(x, y), d \rangle.
    \end{align*}
Using this and the arbitrariness  of $d$, we conclude that $\nabla F^*(x) = \nabla_x f(x, y)$.
\end{proof}

The following lemma establishes that if $F^*$ is H\"older smooth almost everywhere on a relatively open subset of $\cX$, then its differentiability extends to the entire subset.

\begin{lemma} \label{lem:diff_ext}
Let $\Gamma \subseteq \cX$ be relatively open in $\cX$, $\cS = \{ x \in \cX : F^* \text{ is differentiable at } x \}$, and $\partial^\rC_\cX F^*$ be defined as in \eqref{r-clarke}.
Suppose that Assumption~\ref{ass:Lip_Smo_KL} holds, and there exist constants $c_1, c_2 > 0$, $\alpha > 0$, and $\eta > 0$ such that
\beq \label{F-holder}
\|\nabla F^*(u) - \nabla F^*(v)\| \leq c_1 \|u - v\| + c_2 \|u - v\|^\alpha \qquad \forall\, u, v \in \cS \cap \Gamma \text{ with } \|u - v\| \leq \eta.
\eeq
Then $\partial^\rC_\cX F^*(x)$ is a singleton for every $x \in \Gamma$. Moreover, $F^*$ is differentiable on $\Gamma \cap \operatorname{int}(\cX)$.

\end{lemma}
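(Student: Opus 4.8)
The plan is to combine Rademacher's theorem with the H\"older estimate \eqref{F-holder} to manufacture a continuous ``candidate gradient'' $g$ on $\Gamma$, and then to verify, via a fundamental-theorem-of-calculus argument along line segments, that $F^*$ is actually differentiable at every point of $\Gamma$ with $\nabla F^*=g$. First, I would note that $\cS$ is large: by Lemma~\ref{lem:Lips}, $F^*$ is Lipschitz continuous on $\Omega$, so Rademacher's theorem gives that $\Omega\setminus\cS$ has Lebesgue measure zero; in particular $\cS\cap\Gamma$ is dense in $\Gamma$ and has full measure there. (This density is also what makes the hypothesis \eqref{F-holder} nonvacuous.)

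Next, I would define $g$ on $\Gamma$ by approximation. For $x\in\Gamma$, pick $\{x_j\}\subset\cS\cap\Gamma$ with $x_j\to x$. Once $\|x_j-x_{j'}\|\le\eta$, \eqref{F-holder} yields $\|\nabla F^*(x_j)-\nabla F^*(x_{j'})\|\le c\|x_j-x_{j'}\|^\alpha$, so $\{\nabla F^*(x_j)\}$ is Cauchy and converges; applying \eqref{F-holder} again to two competing sequences shows the limit does not depend on the choice, so $g(x):=\lim_j\nabla F^*(x_j)$ is well defined, coincides with $\nabla F^*$ on $\cS\cap\Gamma$, and satisfies $\|g(u)-g(v)\|\le c\|u-v\|^\alpha$ whenever $u,v\in\Gamma$ with $\|u-v\|\le\eta$. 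In particular $g$ is continuous on $\Gamma$ and, since $\cS\cap\Gamma$ has full measure, $g=\nabla F^*$ a.e.\ on $\Gamma$.

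The main step is then to show that for every segment $[a,b]\subset\Gamma$,
\[
F^*(b)-F^*(a)=\int_0^1\big\langle g\big(a+t(b-a)\big),\,b-a\big\rangle\,dt .
\]
Since $F^*$ is Lipschitz, it is absolutely continuous along every line, and by Fubini (using that $\Omega\setminus\cS$ is null) for a.e.\ small perturbation $w$ the point $a+w+t(b-a)$ lies in $\cS$ for a.e.\ $t\in[0,1]$; for such $w$, the one-dimensional FTC together with $\nabla F^*=g$ on $\cS$ gives the identity for $[a+w,b+w]\subset\Gamma$, and letting $w\to0$ through such perturbations, using continuity of $F^*$ and of $g$ (dominated convergence for the integral), yields the identity for $[a,b]$. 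Finally, fixing $x_0\in\Gamma$ and $\delta\in(0,\eta]$ with $\overline{\cB}(x_0,\delta)\subset\Gamma$, the identity and the local H\"older continuity of $g$ give, for all $x\in\cB(x_0,\delta)$,
\[
\big|F^*(x)-F^*(x_0)-\langle g(x_0),x-x_0\rangle\big|
=\Big|\int_0^1\big\langle g(x_0+t(x-x_0))-g(x_0),\,x-x_0\big\rangle dt\Big|
\le\frac{c}{1+\alpha}\,\|x-x_0\|^{1+\alpha},
\]
which is $o(\|x-x_0\|)$; hence $F^*$ is differentiable at $x_0$ with $\nabla F^*(x_0)=g(x_0)$, and since $x_0\in\Gamma$ is arbitrary, $F^*$ is differentiable on $\Gamma$.

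I expect the line-integral identity to be the only delicate point: the obstacle is precisely that the particular segment $[x_0,x]$ may avoid $\cS$ entirely, so one cannot differentiate along it directly, and the fix is the perturbation-plus-Fubini argument followed by a limit using continuity of $F^*$ and $g$. An alternative route that sidesteps the FTC bookkeeping is mollification: on any $\Gamma'\Subset\Gamma$, $F^*\ast\varphi_\rho$ is smooth with $\nabla(F^*\ast\varphi_\rho)=g\ast\varphi_\rho$, and $F^*\ast\varphi_\rho\to F^*$ and $g\ast\varphi_\rho\to g$ uniformly on $\Gamma'$ as $\rho\downarrow0$, which again forces $F^*\in C^1(\Gamma')$ with $\nabla F^*=g$.
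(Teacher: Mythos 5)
Your proposal is correct, but after the shared initial step (Rademacher plus the Cauchy-sequence argument showing that $g(x):=\lim_j\nabla F^*(x_j)$ is a well-defined H\"older-continuous extension of $\nabla F^*|_{\cS\cap\Gamma}$), it diverges from the paper's route. The paper stops the construction at the observation that all sequential limits of $\nabla F^*$ agree, which means the set $\operatorname{conv}\{\lim_k\nabla F^*(\hx^k):\{\hx^k\}\subset\cS,\ \hx^k\to x\}$ is a singleton; combined with the Lipschitz continuity of $F^*$ this says exactly that the Clarke subdifferential $\partial F^*(x)$ is a singleton, and then \cite[Proposition~(1.13)]{clarke1975generalized} immediately yields differentiability at $x$. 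You instead verify differentiability ``by hand'': you establish the line-integral identity $F^*(b)-F^*(a)=\int_0^1\langle g(a+t(b-a)),b-a\rangle\,dt$ via the Fubini-perturbation trick (correctly identifying and handling the genuine obstacle that the chord $[a,b]$ may entirely avoid $\cS$), and then read off differentiability from the $o(\|x-x_0\|)$ estimate. Both arguments are valid. The paper's proof is shorter and leans on the Clarke machinery that is already set up in Subsection~\ref{subsec:notation}; yours is more elementary and self-contained, avoids invoking Clarke's proposition, and gives slightly more as a byproduct (it exhibits $F^*\in C^1(\Gamma)$ with $\nabla F^*=g$ locally H\"older, rather than just pointwise differentiability). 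The mollification alternative you sketch at the end is also a clean standard substitute for the FTC bookkeeping.
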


\begin{proof}
Fix any $x \in \Gamma$. We first claim that there exists at least one sequence in $\cS$ converging to $x$. Indeed, since $\cX$ is a convex set and $\operatorname{aff}(\cX) = \bR^n$, one has $\operatorname{int}(\cX) \neq \emptyset$. Let us fix any $x' \in \operatorname{int}(\cX)$. Then, by $x \in \cX$ and the convexity of $\cX$, it follows from \cite[Proposition 1.3.1]{bertsekas2009convex} that $(x, x'] \subset \operatorname{int}(\cX)$. Consequently, there exists a sequence $\{\hx^k\} \subset \operatorname{int}(\cX)$ such that $\hx^k \to x$. This implies that for each $k$, there exists some $r_k \in (0, 1/k]$ such that $\cB(\hx^k, r_k) \subset \operatorname{int}(\cX)$. In addition, by Lemma~\ref{lem:Lips}, $F^*$ is Lipschitz continuous on the open set $\operatorname{int}(\cX)$, which implies that $\operatorname{int}(\cX) \setminus \cS$ has measure zero due to Rademacher’s theorem. In view of these, one can see that there exists some $\hat{z}^k \in \cB(\hx^k, r_k) \cap \cS$ for every $k$. Hence, we obtain
\[
\|\hat{z}^k - x\| \leq \|\hx^k - x\| + \|\hat{z}^k - \hx^k\| \leq \|\hx^k - x\| + r_k \leq \|\hx^k - x\| + 1/k \to 0    \quad \text{as } k \to \infty.
\]
It follows that $\{\hat{z}^k\} \subset \cS$ and $\hat{z}^k \to x$, which proves the claim.

Now let $\{x^k\} \subset \cS$ be an arbitrary sequence such that $x^k \to x$. Since $\Gamma$ is relatively open in $\cX$ and $x\in\Gamma$, it follows that $x^k \in \cS \cap \Gamma$ for all sufficiently large $k$. Hence, without loss of generality, we may assume that $\{x^k\} \subset \cS \cap \Gamma$.  We now claim that $\{\nabla F^*(x^k)\}$ converges. Indeed, since $\{x^k\}$ converges, it is a Cauchy sequence. Hence, there exists $K$ such that $\|x^{k} - x^{k'}\| \leq \eta$ for all $k, k' \geq  K$. By \eqref{F-holder}, one then has
\[
\|\nabla F^*(x^{k}) - \nabla F^*(x^{k'})\| \leq c_1 \|x^{k} - x^{k'}\|+ c_2 \|x^{k} - x^{k'}\|^\alpha \qquad \forall\, k, k' \geq K,
\]
which implies that $\{\nabla F^*(x^k)\}$ is also a Cauchy sequence and hence converges as claimed.  Next, we show that the limit of $\{\nabla F^*(x^k)\}$ is independent of the choice of sequence. To this end, let $\{\tx^k\} \subset \cS$ be another sequence such that $\{\tx^k\} \to x$. Interleaving $\{x^k\}$ and $\{\tx^k\}$, we obtain a sequence $\{z^k\} \subset \cS$ such that $z^k  \to x$. It then follows from the above claim that $\{\nabla F^*(z^k)\}$ converges. Since both $\{\nabla F^*(x^k)\}$ and $\{\nabla F^*(\tx^k)\}$ are subsequences of $\{\nabla F^*(z^k)\}$, they must share the same limit. Hence, the limit of $\{\nabla F^*(x^k)\}$ is independent of the sequence chosen.
In view of these and the definition of $\partial^\rC_\cX F^*(x)$ in \eqref{r-clarke}, one can see that $\partial^\rC_\cX F^*(x)$ is a singleton for any $x \in \Gamma$.

Next, let us fix any $x \in \Gamma \cap \operatorname{int}(\cX)$. By this and \eqref{r-clarke}, one can observe that
\beq \label{r-clarke-form}
\partial^\rC_\cX F^*(x) = {\rm conv}\{v : \exists\, x^k \rightarrow x  \text{ such that } \nabla F^*(x^k) \to v \}.
\eeq
Recall from Lemma \ref{lem:Lips} that $F^*$ is Lipschitz continuous on $\cX$. Moreover, note that $\Gamma \cap \operatorname{int}(\cX) \subseteq \operatorname{int}(\cX)$. It follows that the Clarke subdifferential of $F^*$ at $x$ is well defined as in \eqref{clarke}.  Using this and \eqref{r-clarke-form}, we obtain that $\partial^\rC_\cX F^*(x)=\partial^\rC F^*(x)$. Notice from above that $\partial^\rC_\cX F^*(x)$ is a singleton. Hence, $\partial^\rC F^*(x)$ is also a singleton. By this and \cite[Proposition (1.13)]{clarke1975generalized}, we conclude that $F^*$ is differentiable at any $x \in \Gamma \cap \operatorname{int}(\cX)$.
\end{proof}

The following result establishes a local $(1-\theta)^{-1}$-growth property of $F(x, \cdot)$ for any $x \in \mathcal{X}$, which was previously derived in the proof of \cite[Theorem 3.7]{drusvyatskiy2021nonsmooth}. Here, we provide an alternative and self-contained proof. Our proof generalizes the one used to derive the global quadratic growth result in \cite[Appendix G]{karimi2016linear} for the special case where $F(x, \cdot)$ satisfies the global KL condition with exponent $1/2$---that is, $F(x,\cdot)$ satisfies \eqref{KL_y} with $\theta=1/2$, and $\cL(x)$ replaced by $\cY$ for any $x\in\cX$. 

\begin{lemma} \label{lem:QG}
Suppose that Assumption~\ref{ass:Lip_Smo_KL} holds. Then it holds that for any $x \in \cX$,
\beq \label{lem4_QG}
F^*(x) - F(x, y) \geq (C(1-\theta))^{\frac{1}{1-\theta}} \operatorname{dist}(y, Y^*(x))^{\frac{1}{1-\theta}} \qquad \forall y \in \mathcal{L}(x).
\eeq
\end{lemma}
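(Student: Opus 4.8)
\textbf{Proof proposal for Lemma~\ref{lem:QG}.}

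The plan is to mimic the classical chain ``KL exponent $\theta$ $\Rightarrow$ $(1-\theta)^{-1}$-growth'' argument (as in \cite[Appendix G]{karimi2016linear} for $\theta=1/2$), but to run it inside the level set $\cL(x)$ so that the local KL inequality \eqref{KL_y} is all that is needed. Fix $x\in\Omega$ and set, for notational brevity, $\phi(y):=F^*(x)-F(x,y)\ge 0$, $d(y):=\dist(y,Y^*(x))$, and recall that by Assumption~\ref{ass:Lip_Smo_KL}(iii) the inner problem $\max_y F(x,\cdot)$ has a nonempty solution set $Y^*(x)$ with finite optimal value, so $\phi$ vanishes exactly on $Y^*(x)$. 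Since $-F(x,\cdot)=-f(x,\cdot)+q(\cdot)$ is the sum of an $L_{\nabla f}$-smooth function and a closed convex function, $\phi$ is continuous on $\cY$ and $\partial_y(-F)(x,y)=\partial_y\phi(x,y)$ (up to sign) is outer semicontinuous with the distance-to-zero bounded on bounded sets; the KL inequality \eqref{KL_y} gives $C\phi(y)^\theta\le\dist(0,\partial_y\phi(x,y))$ for every $y\in\cL(x)$.

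Next I would consider a fixed $y\in\cL(x)$ and analyze the gradient-flow / steepest-descent trajectory of $\phi$ (equivalently of $-F(x,\cdot)$) emanating from $y$. Formally, let $y(t)$ solve $\dot y(t)\in-\partial_y\phi(x,y(t))$ with $y(0)=y$; this is the standard device for turning a KL inequality into a trajectory-length (hence distance-to-solution) bound. Along the trajectory one has, wherever $\phi(y(t))>0$ and $y(t)\in\cL(x)$, the differential inequality
\[
-\frac{d}{dt}\,\phi(y(t))^{1-\theta}=(1-\theta)\,\phi(y(t))^{-\theta}\,\dist\!\big(0,\partial_y\phi(x,y(t))\big)^2\ \ge\ (1-\theta)\,C\,\dist\!\big(0,\partial_y\phi(x,y(t))\big)\ \ge\ (1-\theta)\,C\,\|\dot y(t)\|,
\]
using \eqref{KL_y} and the fact that the steepest-descent velocity realizes the minimal-norm subgradient. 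Integrating from $0$ to $\infty$ and using that $\phi(y(t))\downarrow 0$ and $y(t)$ converges to some $y_\infty\in Y^*(x)$ (a consequence of the finite trajectory length) yields
\[
(1-\theta)\,C\,d(y)\ \le\ (1-\theta)\,C\,\|y-y_\infty\|\ \le\ \int_0^\infty\|\dot y(t)\|\,dt\ \le\ \phi(y)^{1-\theta},
\]
which rearranges to exactly \eqref{lem4_QG}. A cleaner, fully discrete substitute that avoids differential inclusions: apply Algorithm~\ref{algo_implement:subsolver} (or any proximal-gradient scheme) to $\min_y -F(x,y)$ started at $y$, use the sufficient-decrease inequality to get a telescoping bound $\sum_k\|y^{k+1}-y^k\|\le \text{const}\cdot\sum_k(\phi(y^k)^\theta\text{-type terms})$, and combine with \eqref{KL_y} and a summation-by-parts / concavity-of-$t\mapsto t^{1-\theta}$ argument to conclude $\sum_k\|y^{k+1}-y^k\|\le (C(1-\theta))^{-1}\phi(y)^{1-\theta}$; since the iterates stay in $\cL(x)$ (the objective only decreases, so $\phi$ only decreases) and converge to $Y^*(x)$, the left side dominates $d(y)$.

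The delicate point—and the one I expect to be the main obstacle—is the \emph{invariance of $\cL(x)$} along whichever trajectory or iteration is used: the level-set constraint $0<\phi(y)\le\gamma\,\dist(0,\partial\Psi(x))^\sigma$ in \eqref{Lev_set} is not a sublevel set in the usual sense because its right-hand side is a fixed number depending only on $x$, so as long as the scheme is monotone in $\phi$ (which both the gradient flow and the line-searched proximal-gradient method are), once $y(0)\in\cL(x)$ we stay in $\cL(x)$ until $\phi$ hits $0$; one must also rule out finite-time absorption issues and handle the boundary case $\phi(y)=0$ (trivial, $d(y)=0$) and the endpoint where the trajectory reaches $Y^*(x)$. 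The other technical care needed is justifying that the infinite sum / integral of the velocities is finite and bounds $d(y)$ from above, i.e.\ that the trajectory actually converges to a point of $Y^*(x)$ rather than merely having $\phi\to0$; this is where outer semicontinuity of $\partial_y F(x,\cdot)$ and closedness of $Y^*(x)$ enter. Everything else is the routine concavity estimate $\phi(y^k)^{1-\theta}-\phi(y^{k+1})^{1-\theta}\ge(1-\theta)\phi(y^k)^{-\theta}(\phi(y^k)-\phi(y^{k+1}))$ combined with the KL bound.
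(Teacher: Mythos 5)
Your proposal is correct and takes essentially the same route as the paper: both run a subgradient-flow (steepest-ascent for $F(x,\cdot)$, equivalently steepest-descent for $\phi$) trajectory starting at $y$, derive the differential inequality $\tfrac{d}{dt}\,\phi(Y(t))^{1-\theta}\le -C(1-\theta)\|\dot Y(t)\|$ from the local KL inequality \eqref{KL_y} and the identity $\tfrac{d}{dt}F(x,Y(t))=\|\dot Y(t)\|^2$, integrate the length along the trajectory, and use level-set invariance plus convergence of $Y(t)$ to a point of $Y^*(x)$ to bound $\dist(y,Y^*(x))$; the paper supplies the existence and chain-rule facts you flag as delicate by invoking \cite[Theorem~13]{bolte2010characterizations} for weakly concave $F(x,\cdot)$, and handles the finite-time absorption case ($r(t)=0$) separately exactly as you anticipate.
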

\begin{proof}
Fix any $x \in \cX$ and $y \in \mathcal{L}(x)$. It then follows from the definition of $\mathcal{L}(x)$ in \eqref{Lev_set} that $y \notin Y^*(x)$.

Recall from Assumption~\ref{ass:Lip_Smo_KL} that $f$ is Lipschitz smooth on $\cX \times \cY$. It together with the convexity of $q$ and the expression of $F$ implies that $F(x,\cdot)$ is weakly concave on $\cY$. In addition, since $y \in \mathcal{L}(x)$, one has $y \in \dom\,F(x,\cdot)$. By these and \cite[Theorem 13]{bolte2010characterizations},  there exists a unique absolutely continuous curve
$Y:[0,\infty) \to \mathbb{R}^m$ satisfying
\begin{align}
&Y(0)=y,\quad \dot{Y}(t) \in \partial_y F(x,Y(t))\quad \text{a.e. } t>0, \label{lem4_Y1} \\
& \frac{d}{dt}F(x,Y(t)) = \|\dot{Y}(t)\|^2\quad \text{a.e. } (\eta, \infty)  \label{lem4_Y2}
\end{align}
for any $\eta > 0$, and moreover, $F(x, Y(\cdot))$ is non-decreasing and continuous on $[0, \infty)$. It follows that $Y(t) \in \mathcal{L}(x)$ for any $t \geq 0$.

Let $r(t) = (F^*(x) - F(x, Y(t)))^{1-\theta}$. By $y \notin Y^*(x)$ and the monotonicity and continuity of 
$F(x, Y(\cdot))$, one can observe that $r(0) > 0$,  and $r$ is non-negative, nonincreasing, and continuous on $[0, \infty)$. We next show that
\beq \label{deri_r}
\frac{d}{dt}r(t) \le -C(1-\theta) \|\dot{Y}(t)\| \quad \text{a.e. } (\eta, \infty) 
\eeq
for any $\eta > 0$. To this end, let us fix any $\eta > 0$ and consider two separate cases below.

Case 1) $r(t) > 0$ on $[0,\infty)$. It follows from this, \eqref{KL_y}, \eqref{lem4_Y1}, and \eqref{lem4_Y2} that
\begin{align*}
\frac{d}{dt}r(t) 
&\overset{\eqref{lem4_Y2}}{=}-(1-\theta)\bigl(F^*(x)-F(x,Y(t))\bigr)^{-\theta}\|\dot{Y}(t)\|^2 \overset{\eqref{KL_y}}{\leq} -C(1-\theta)\operatorname{dist}\bigl(0,\partial_y F(x,Y(t))\bigr)^{-1}\|\dot{Y}(t)\|^2\\ 
&\overset{\eqref{lem4_Y1}}{\leq} -C(1-\theta)\,\|\dot{Y}(t)\|^{-1}\|\dot{Y}(t)\|^2 = -C(1-\theta)\,\|\dot{Y}(t)\| \quad \text{a.e. } (\eta, \infty),
\end{align*}
and hence \eqref{deri_r} holds as desired.

Case 2) $r(t) = 0$ for some $t > 0$.  Since $r$ is continuous on $[0, \infty)$,  one has that $t_0:=\min\{t>0: r(t)=0\}>0$.  By this and the nonnegativity and monotonicity of $r$, we have $r(t)=0$ and hence $F(x, Y(t))=F^*(x)$ for all $t \ge t_0$.  It then follows from \eqref{lem4_Y2} with $\eta$ replaced by $t_0$ that $\|\dot{Y}(t)\|= 0$ almost everywhere on $(t_0, \infty)$. Hence, we obtain
\beq \label{deri_r1}
\frac{d}{dt}r(t) \le -C(1-\theta) \|\dot{Y}(t)\| \quad \text{a.e. } (t_0, \infty).
\eeq
It follows that \eqref{deri_r} holds if $\eta \geq t_0$. We now assume that $\eta <t_0$. Note that $r(t)>0$ for all $t\in [\eta,t_0)$. By a similar argument as in Case 1), one can conclude that 
\[
\frac{d}{dt}r(t) \le -C(1-\theta) \|\dot{Y}(t)\| \quad \text{a.e. } (\eta, t_0).
\]
Combining this with \eqref{deri_r1}, we see that \eqref{deri_r} holds in this case as well.

Fix any $T > 0$ and $\delta \in (0, T)$. By \eqref{deri_r}, the monotonicity of $r$, and the absolute continuity of $Y(\cdot)$, one has
\begin{align*}
    r(T) - r(\delta) &\leq \int_{\delta}^{T} \frac{d}{dt} r(t) \, dt \overset{\eqref{deri_r}}{\leq} -C(1-\theta) \int_{\delta}^{T} \|\dot{Y}(t)\| \, dt 
    \leq -C(1-\theta) \left\| \int_{\delta}^{T} \dot{Y}(t) \, dt \right\| \\ 
    &= -C(1-\theta) \|Y(T) - Y(\delta)\|,
\end{align*}
where the first inequality follows from the monotonicity of $r$ (e.g., see \cite[Chapter~3, Exercise~16]{stein2009real}), and the equality uses the absolute continuity of $Y(\cdot)$. Taking the limit on both sides of the above relation as $\delta \to 0$, and using $Y(0)=y$ and the continuity of $r$ and $Y(\cdot)$, we obtain  
\begin{equation} \label{eq:QG1}
    r(T) - r(0) \leq -C(1-\theta) \|Y(T) - y\|.
\end{equation}

We next show that  $\lim_{T \to \infty} r(T) = 0$. It clearly holds if there exists some $t > 0$ such that $r(t) = 0$, due to the nonnegativity and monotonicity of $r$. We now assume that $r(t) > 0$ for all $t\in [0,\infty)$. By this, \eqref{KL_y}, \eqref{lem4_Y1}, \eqref{lem4_Y2}, and the monotonicity of $r$, one has that  for any $T > 0$ and $\delta \in (0, T)$,
\begin{align*}
    r(T) - r(\delta) &\leq \int_{\delta}^{T} \frac{d}{dt} r(t) \, dt \overset{\eqref{lem4_Y2}}{=} -(1-\theta) \int_{\delta}^{T} (F^*(x) - F(x, Y(t)))^{-\theta} \|\dot{Y}(t)\|^2 \, dt \\
    &\overset{\eqref{lem4_Y1}}{\leq} -(1-\theta) \int_{\delta}^{T} (F^*(x) - F(x, Y(t)))^{-\theta} \operatorname{dist}(0, \partial_y F(x, Y(t)))^2 \, dt \\
    &\overset{\eqref{KL_y}}{\leq} -C^2(1-\theta) \int_{\delta}^{T} (F^*(x) - F(x, Y(t)))^{\theta} \, dt  = -C^2(1-\theta) \int_{\delta}^{T} r(t)^{\frac{\theta}{1-\theta}} \, dt \\ 
    & \leq -C^2(1-\theta) (T - \delta) r(T)^{\frac{\theta}{1-\theta}}, 
\end{align*}
where the first and last inequalities follow from the monotonicity of $r$. This relation and  $r(T)>0$ imply that 
$ - r(\delta) \leq -C^2(1-\theta) (T - \delta) r(T)^{\frac{\theta}{1-\theta}}$. 
Taking the limit on both sides of this relation as $\delta \to 0$, and using the continuity of $r$, we obtain that 
$- r(0) \leq -C^2(1-\theta) T r(T)^{\frac{\theta}{1-\theta}}$, which yields $r(T) \leq [r(0)/(C^2(1-\theta) T)]^{\frac{1-\theta}{\theta}}$. This along with $r(T)>0$ implies that $r(T) \to 0$ as $T \to \infty$.

By \eqref{eq:QG1} and the nonnegativity of $r$, one can observe that the range of $Y(\cdot)$ is bounded. In addition, notice from Assumption \ref{ass:Lip_Smo_KL} that $\dom\,F(x, \cdot)$ is closed.  By these facts, there exists a sequence $\{t_k\} \subset (0,\infty)$ such that $t_k\to\infty$ and $\{Y(t_k)\}$ converges to some point $y^* \in \operatorname{dom} F(x, \cdot)$.  Recall that $\lim_{t \to \infty} r(t) = 0$, which along with $t_k\to\infty$ implies that $r(t_k) \to 0$.  It then follows that $\lim_{k \to \infty} F(x, Y(t_k)) = F^*(x)$. On the other hand,  by the upper semicontinuity of $F(x, \cdot)$ and $Y(t_k) \to y^* \in \operatorname{dom} F(x, \cdot)$, one has $\limsup_{k \to \infty} F(x, Y(t_k)) \leq F(x, y^*)$. Combining these relations, we conclude that $y^* \in Y^*(x)$. Finally, letting $T = t_k$ in \eqref{eq:QG1}, one has $r(t_k) - r(0) \leq -C(1-\theta) \|Y(t_k) - y\|$. Taking the limit on both sides of this inequality as $k\to\infty$, and using the fact that  $r(t_k) \to 0$ and $Y(t_k)\to y^* \in Y^*(x)$, we obtain that
\[
r(0) \geq  C(1-\theta) \|y^* - y\| \geq C(1-\theta) \operatorname{dist}(y, Y^*(x)),
\]
which together with the expression of $r$ implies that the conclusion \eqref{lem4_QG} holds.
\end{proof}

As an immediate consequence of Lemma \ref{lem:QG} and Assumption \ref{ass:Lip_Smo_KL}, the following lemma establishes an error bound for $F(x,\cdot)$. This result, originally derived in \cite[Theorem 3.7]{drusvyatskiy2021nonsmooth}, provides a relationship between $\dist(y, Y^*(x))$ and $\dist(0, \partial_y F(x, y))$.

\begin{lemma} \label{lem:EB}
Suppose that Assumption~\ref{ass:Lip_Smo_KL} holds. Then it holds that for any $x \in \cX$, 
\beq \label{lem5_EB}
\operatorname{dist}(y, Y^*(x)) \leq (1-\theta)^{-1} {C^{-\frac{1}{\theta}}} \operatorname{dist}\big(0, \partial_y F(x, y)\big)^{\frac{1-\theta}{\theta}} \qquad \forall y \in \mathcal{L}(x).
\eeq
\end{lemma}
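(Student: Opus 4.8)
The plan is to simply chain the local KL inequality \eqref{KL_y} with the growth estimate of Lemma~\ref{lem:QG}. Fix an arbitrary $x\in\Omega$ and $y\in\mathcal{L}(x)$. From \eqref{KL_y} we have $C\,(F^*(x)-F(x,y))^{\theta}\le\operatorname{dist}(0,\partial_y F(x,y))$, and since the definition of $\mathcal{L}(x)$ in \eqref{Lev_set} forces $F^*(x)-F(x,y)>0$, we may raise both sides to the power $1/\theta$ to obtain
\[
F^*(x)-F(x,y)\ \le\ C^{-1/\theta}\,\operatorname{dist}\big(0,\partial_y F(x,y)\big)^{1/\theta}.
\]
(If $\partial_y F(x,y)=\emptyset$ the right-hand side is $+\infty$ by convention and the bound is vacuous, so there is nothing to do; otherwise the manipulation is legitimate.) On the other hand, Lemma~\ref{lem:QG} gives $F^*(x)-F(x,y)\ge (C(1-\theta))^{1/(1-\theta)}\operatorname{dist}(y,Y^*(x))^{1/(1-\theta)}$, i.e.
\[
\operatorname{dist}(y,Y^*(x))^{1/(1-\theta)}\ \le\ (C(1-\theta))^{-1/(1-\theta)}\,(F^*(x)-F(x,y)).
\]

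Combining the two displays yields $\operatorname{dist}(y,Y^*(x))^{1/(1-\theta)}\le (C(1-\theta))^{-1/(1-\theta)}C^{-1/\theta}\operatorname{dist}(0,\partial_y F(x,y))^{1/\theta}$, and raising both sides to the power $1-\theta$ gives
\[
\operatorname{dist}(y,Y^*(x))\ \le\ (1-\theta)^{-1}\,C^{-1}\,C^{-(1-\theta)/\theta}\,\operatorname{dist}\big(0,\partial_y F(x,y)\big)^{(1-\theta)/\theta}.
\]
It remains only to simplify the constant: $-1-(1-\theta)/\theta=-(\theta+1-\theta)/\theta=-1/\theta$, so $C^{-1}C^{-(1-\theta)/\theta}=C^{-1/\theta}$, and \eqref{lem5_EB} follows. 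Since $x\in\Omega$ and $y\in\mathcal{L}(x)$ were arbitrary, the proof is complete. There is no real obstacle here — the only thing to be careful about is the bookkeeping of the exponents on $C$ and handling the degenerate case $\partial_y F(x,y)=\emptyset$ via the convention $\operatorname{dist}(0,\emptyset)=\infty$.
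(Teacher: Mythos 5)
Your proof is correct and follows exactly the same route as the paper, which simply cites \eqref{KL_y} and \eqref{lem4_QG} without spelling out the algebra; you have verified the exponent bookkeeping, and it checks out. The only addition you make — handling the $\partial_y F(x,y)=\emptyset$ case via the convention $\operatorname{dist}(0,\emptyset)=\infty$ — is a harmless but reasonable precaution.
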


\begin{proof}
The relation \eqref{lem5_EB} follows from \eqref{KL_y} and \eqref{lem4_QG}.
\end{proof}

The next result concerns the relative openness of the set $\cU_\epsilon$ in $\cX$.

\begin{lemma} \label{lem:open}
Let $\cU_\epsilon$ be defined in \eqref{U-eps}. Suppose that Assumption~\ref{ass:Lip_Smo_KL} holds. Then $\cU_\epsilon$ is relatively open in $\cX$ for any $\epsilon > 0$.
\end{lemma}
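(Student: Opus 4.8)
The plan is to prove the equivalent statement that $\{x\in\Omega:\operatorname{dist}(0,\partial\Psi(x))\le\epsilon\}$ is closed relative to $\Omega$; since $\Omega$ is open, this immediately yields that $\cU_\epsilon$ defined in \eqref{U-eps} is open. The single fact driving everything is that the set-valued map $x\mapsto\partial\Psi(x)=\partial F^*(x)+\partial p(x)$ has closed graph on $\Omega$ (equivalently, is outer semicontinuous there).

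To establish the closed-graph property, I would combine two ingredients. First, by Lemma~\ref{lem:Lips}, $F^*$ is $L_f$-Lipschitz on the open set $\Omega$; hence its Clarke subdifferential $\partial F^*$ is nonempty, compact- and convex-valued, satisfies $\|v\|\le L_f$ for every $v\in\partial F^*(x)$ and $x\in\Omega$, and has closed graph (standard properties of the Clarke subdifferential of a locally Lipschitz function). Second, the convex subdifferential $\partial p$ has closed graph, with the convention $\partial p(x)=\emptyset$ for $x\notin\dom\,p$. Now suppose $x^k\to x$ in $\Omega$, $v^k\in\partial\Psi(x^k)$, and $v^k\to v$. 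Writing $v^k=a^k+b^k$ with $a^k\in\partial F^*(x^k)$ and $b^k\in\partial p(x^k)$, the uniform bound $\|a^k\|\le L_f$ lets me pass to a subsequence along which $a^k\to a$; then $b^k=v^k-a^k\to v-a$. Closedness of the graphs of $\partial F^*$ and $\partial p$ gives $a\in\partial F^*(x)$ and $v-a\in\partial p(x)$, so $v=a+(v-a)\in\partial\Psi(x)$, as needed.

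With the closed-graph property in hand, the argument concludes quickly. Let $x^k\to x\in\Omega$ with $\operatorname{dist}(0,\partial\Psi(x^k))\le\epsilon$ for all $k$. Since $\operatorname{dist}(0,\emptyset)=\infty>\epsilon$, each $\partial\Psi(x^k)$ is nonempty; it is also closed, being the sum of the compact set $\partial F^*(x^k)$ and the closed convex set $\partial p(x^k)$, so it contains a point $v^k$ with $\|v^k\|=\operatorname{dist}(0,\partial\Psi(x^k))\le\epsilon$. Extracting a convergent subsequence $v^k\to v$, we get $\|v\|\le\epsilon$, and the closed-graph property gives $v\in\partial\Psi(x)$, whence $\operatorname{dist}(0,\partial\Psi(x))\le\|v\|\le\epsilon$. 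Thus $\{x\in\Omega:\operatorname{dist}(0,\partial\Psi(x))\le\epsilon\}$ is closed in $\Omega$, so $\cU_\epsilon$ is open.

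The step requiring the most care is the closed-graph claim for $\partial\Psi$: because $\partial p$ fails to be locally bounded near the boundary of $\dom\,p$, one cannot simply invoke a generic ``sum of outer-semicontinuous maps'' result, and the convention $\partial p(x)=\emptyset$ off $\dom\,p$ must be tracked (points of $\Omega\setminus\dom\,p$ then lie in $\cU_\epsilon$ automatically). It is precisely the $L_f$-bound on $\partial F^*$ supplied by Lemma~\ref{lem:Lips} that provides the compactness needed to split $v^k=a^k+b^k$ and pass to the limit, which sidesteps this difficulty.
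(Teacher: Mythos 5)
Your proposal is correct and follows essentially the same route as the paper: reduce to closedness of the sublevel set of $\operatorname{dist}(0,\partial\Psi(\cdot))$, use the Lipschitz bound from Lemma~\ref{lem:Lips} to control $\partial F^*$, and pass subgradient sequences to the limit via outer semicontinuity. The one place you are more explicit than the paper is the step where outer semicontinuity of $\partial\Psi=\partial F^*+\partial p$ is deduced: the paper simply asserts it after noting both summands are outer semicontinuous on $\cX$, while you correctly observe that this inference needs the uniform bound $\|a^k\|\le L_f$ on the $\partial F^*$-component to extract a convergent subsequence, and you also flag that $\partial p$ need not be locally bounded — which is indeed the hidden issue. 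The paper handles this implicitly by invoking $\partial^\infty F^*\equiv\{0\}$ (to get the sum rule $\partial\Psi=\partial F^*+\partial p$) together with the boundedness of $\partial F^*$ on $\cX$, so the two arguments rest on the same facts; yours just spells out the compactness splitting that makes the limit pass go through.
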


\begin{proof}
Fix any $\epsilon > 0$. To prove that $\cU_\epsilon$ is relatively open in $\cX$, it suffices to show that the set $\{x \in \mathbb{R}^n : \operatorname{dist}(0, \partial \Psi(x)) > \epsilon\}$ is open, or equivalently,  $\cC=\{x \in \mathbb{R}^n : \operatorname{dist}(0, \partial \Psi(x)) \leq \epsilon\}$ is closed.
To this end, consider any convergent sequence $\{x^k\} \subset \cC$ with $x^k \to x$ for some $x\in\bR^n$.  Clearly,  $\cC \subseteq \cX$ and hence $\{x^k\} \subset \cX$. It then follows from $x^k \to x$ and the closedness of $\cX$ that $x\in\cX$.  Also, since $x^k \in \cC$, there exists $s^k \in \partial \Psi(x^k)$ with $\|s^k\| \leq \epsilon$. Without loss of generality, we may assume that $s^k \to s$ for some $s$ with $\|s\| \leq \epsilon$.
Observe from Assumption~\ref{ass:Lip_Smo_KL} that $f(\cdot, y)$ is $L_{\nabla f}$-smooth on $\cX$ for all $y \in \cY$, which together with \eqref{eq:F-star} implies that $F^*$ is weakly convex on $\cX$. By this, the convexity of $p$, $\operatorname{ri}(\dom\,F^*) \cap \operatorname{ri}(\dom\,p)=\operatorname{int}(\cX) \neq \emptyset$, $\Psi=F^*+p$, and \cite[Theorem 28.3]{rockafellar1997convex}, one has $\partial \Psi=\partial F^*+\partial p$ on $\cX$.
Additionally, notice that $\partial F^*$ and $\partial p$ are outer semicontinuous on $\cX$. Hence, $\partial \Psi$ is outer semicontinuous on $\cX$, which together with $x^k \to x$, $s^k \in \partial \Psi(x^k)$, and $s^k \to s$ implies that $s \in \partial \Psi(x)$. By this and $\|s\| \leq \epsilon$, we conclude that $x\in\cC$. Hence, $\cC$ is closed as desired.
\end{proof}

We are now ready to prove Theorem~\ref{thm:Phi_smooth}.

\begin{proof}[\textbf{Proof of Theorem~\ref{thm:Phi_smooth}}] 
(i) Fix any $\epsilon > 0$ and $x \in \cU_\epsilon$. It follows that $x\in\cX$ and $\operatorname{dist}(0, \partial \Psi(x)) > \epsilon$.  This together with \eqref{Lev_set} implies that $\{y: F^*(x) > F(x, y) \geq F^*(x) - \gamma \epsilon^\sigma\} \subseteq \cL(x)$. By this and \eqref{lem5_EB}, one has
\beq \label{thm1_EB}
\operatorname{dist}(y, Y^*(x)) \leq (1-\theta)^{-1} {C^{-\frac{1}{\theta}}} \operatorname{dist}(0, \partial_y F(x, y))^{\frac{1-\theta}{\theta}} \qquad \forall y\,  \text{ with } F^*(x) > F(x, y) \geq F^*(x) - \gamma \epsilon^\sigma.
\eeq
Clearly, the above relation also holds for any $y\in Y^*(x)$. Now, fix any $x' \in \cU_\epsilon$ with $\|x - x'\| \leq \gamma \epsilon^\sigma/(2L_f)$. Observe from  Assumption~\ref{ass:Lip_Smo_KL} that $Y^*(x') \neq \emptyset$. Let $y^*(x') \in Y^*(x')$ be arbitrarily chosen. Then $F(x', y^*(x')) = F^*(x')$. By these, Assumption~\ref{ass:Lip_Smo_KL}, and Lemma~\ref{lem:Lips}, one has
\begin{align*}
    F(x, y^*(x')) - F^*(x) = F(x, y^*(x')) - F(x', y^*(x'))  + F^*(x') - F^*(x) 
    \geq -2L_f\|x - x'\| \geq -\gamma \epsilon^\sigma,
\end{align*}
where the first inequality uses the $L_f$-Lipschitz continuity of $F^*$  and $F(\cdot, y)$ for each $y \in \mathcal{Y}$ due to Assumption~\ref{ass:Lip_Smo_KL} and Lemma~\ref{lem:Lips}. Hence, it follows from \eqref{thm1_EB} that
\[
\operatorname{dist}(y^*(x'), Y^*(x)) \leq (1-\theta)^{-1} {C^{-\frac{1}{\theta}}} \operatorname{dist}(0, \partial_y F(x, y^*(x')))^{\frac{1-\theta}{\theta}}.    
\]

Since $y^*(x') \in Y^*(x')$, by the first-order optimality condition, one has $0 \in \partial_y F(x', y^*(x'))$. In addition, by the expression of $F$ and the smoothness of $f$ on $\cX \times \cY$, we obtain
\beq \label{partial-y}
\partial_y F(x', y^*(x')) = \nabla_y f(x', y^*(x')) - \partial q(y^*(x')),\qquad 
\partial_y F(x, y^*(x')) = \nabla_y f(x, y^*(x')) - \partial q(y^*(x')).
\eeq
The first relation in \eqref{partial-y} and $0 \in \partial_y F(x', y^*(x'))$ lead to $ \nabla_y f(x', y^*(x')) \in 
\partial q(y^*(x'))$, which along with the second relation in \eqref{partial-y} implies that
\[
\nabla_y f(x, y^*(x')) - \nabla_y f(x', y^*(x')) \in \partial_y F(x, y^*(x')).
\]
Using this and the Lipschitz continuity of $\nabla_y f$, we have
\[
\operatorname{dist}(0, \partial_y F(x, y^*(x'))) \leq \|\nabla_y f(x, y^*(x')) - \nabla_y f(x', y^*(x'))\| \leq L_{\nabla f} \|x' - x\|.
\]
Combining this with \eqref{thm1_EB} yields
\[
\operatorname{dist}(y^*(x'), Y^*(x)) \leq (1-\theta)^{-1} {C^{-\frac{1}{\theta}}} L_{\nabla f}^{\frac{1-\theta}{\theta}} \|x' - x\|^{\frac{1-\theta}{\theta}}.
\]
Notice from Assumption~\ref{ass:Lip_Smo_KL} that $Y^*(x)$ is a nonempty closed set. Hence, there exists $y^*(x) \in Y^*(x)$ such that $\|y^*(x')-y^*(x)\| = \operatorname{dist}(y^*(x'), Y^*(x))$. By this and the above relation, one has
\beq \label{thm1_distWithx1x2}
\|y^*(x')-y^*(x)\|   \leq (1-\theta)^{-1} {C^{-\frac{1}{\theta}}} L_{\nabla f}^{\frac{1-\theta}{\theta}} \|x' - x\|^{\frac{1-\theta}{\theta}}.
\eeq   

Suppose further that $F^*$ is differentiable at $x$ and $x'$. Using this, $y^*(x) \in Y^*(x)$, $y^*(x') \in Y^*(x')$, and Lemma~\ref{lem:grad}, we obtain 
\[
\nabla F^*(x) = \nabla_x f(x, y^*(x)), \qquad \nabla F^*(x') = \nabla_x f(x', y^*(x')). 
\] 
By these, \eqref{thm1_distWithx1x2}, $\theta \in [1/2,1)$, $\|x - x'\| \leq \gamma \epsilon^\sigma/(2L_f)$, and the Lipschitz smoothness of $f$, one has
\begin{align}
    &\| \nabla F^*(x') - \nabla F^*(x) \| = \| \nabla_x f(x', y^*(x')) - \nabla_x f(x, y^*(x)) \| \nn \\ 
    &\leq \| \nabla_x f(x', y^*(x')) - \nabla_x f(x, y^*(x'))\| + \|\nabla_x f(x, y^*(x')) - \nabla_x f(x, y^*(x)) \| \nn\\
    &\leq L_{\nabla f}\|x' - x\| + L_{\nabla f} \|y^*(x') - y^*(x)\| \overset{\eqref{thm1_distWithx1x2}}{\leq} L_{\nabla f}\|x' - x\| + (1-\theta)^{-1} {C^{-\frac{1}{\theta}}}L_{\nabla f}^{\frac{1}{\theta}} \|x' - x\|^{\frac{1-\theta}{\theta}}. \label{thm1_holder_bound1} 
\end{align}
Using this, Lemma~\ref{lem:diff_ext}, and the fact that $\cU_\epsilon$ is relatively open in $\cX$ (see Lemma~\ref{lem:open}), we conclude that $\partial^\rC_\cX F^*(x)$ is a singleton for every $x \in \cU_\epsilon$ and $F^*$ is differentiable on $\cU_\epsilon \cap \operatorname{int}(\cX)$. Hence, statement (i) of Theorem~\ref{thm:Phi_smooth} holds.

(ii) The relation \eqref{thm1_HolderU} directly follows from statement (i) of Theorem~\ref{thm:Phi_smooth} and \eqref{thm1_holder_bound1}. This proves statement (ii) of Theorem~\ref{thm:Phi_smooth}.

(iii) We now prove \eqref{thm1_tilde_Holder}. Let us fix any $x, x' \in \cU_\epsilon$ with $\|x - x'\| \leq \gamma \epsilon^\sigma / (4L_f)$. It follows from statement (i) of Theorem~\ref{thm:Phi_smooth} that both $\partial^\rC_\cX F^*(x)$ and $\partial^\rC_\cX F^*(x')$ are a singleton. By this and \eqref{r-clarke}, there exist sequences $\{x^k\} \subset \cS$ and $\{\hx^k\} \subset \cS$ such that $\nabla^\rC_\cX F^*(x) = \lim_{k\to\infty} \nabla F^*(x^k)$, $\nabla^\rC_\cX F^*(x') = \lim_{k\to\infty} \nabla F^*(\hx^k)$, $x^k \to x$, and $\hx^k \to x'$. In view of these and the relative openness of $\cU_\epsilon$, we may assume without loss of generality that $x^k, \hx^k \in \cU_\epsilon \cap \cS$ and $\|x^k - \hx^k\| \leq \gamma \epsilon^\sigma / (2L_f)$ for all $k$. It then follows from \eqref{thm1_holder_bound1} that
\[
\| \nabla F^*(x^k) - \nabla F^*(\hx^k) \| \leq L_{\nabla f}\|x^k - \hx^k\| + (1-\theta)^{-1} {C^{-\frac{1}{\theta}}}L_{\nabla f}^{\frac{1}{\theta}} \|x^k - \hx^k\|^{\frac{1-\theta}{\theta}}.
\]
Taking limits on both sides of this inequality as $k \to \infty$ yields \eqref{thm1_tilde_Holder}. Hence, statement (iii) of Theorem~\ref{thm:Phi_smooth} holds. 

(iv) We next prove \eqref{thm1_nablaPhi}. To this end, fix any $x \in \cU_\epsilon$ and $y^* \in Y^*(x)$. It follows from statement (i) of Theorem~\ref{thm:Phi_smooth} that $\partial^\rC_\cX F^*(x)$ is a singleton. By this and \eqref{r-clarke},  there exists a sequence $\{x^k\} \subset \cS$ such that $\nabla^\rC_\cX F^*(x) = \lim_{k\to\infty} \nabla F^*(x^k)$ and $x^k \to x$. Using this and the relative openness of $\cU_\epsilon$, we may assume without loss of generality that $x^k \in \cU_\epsilon$ and $\|x^k - x\| \leq \gamma \epsilon^\sigma/(2L_f)$ for all $k$. By these and  \eqref{thm1_distWithx1x2},  there exists some $y^k \in Y^*(x^k)$ such that 
\[
\|y^k - y^*\| \leq (1-\theta)^{-1} {C^{-\frac{1}{\theta}}} L_{\nabla f}^{\frac{1-\theta}{\theta}} \|x^k - x\|^{\frac{1-\theta}{\theta}}.
\]
This along with $x^k \to x$ implies $y^k \to y^*$. Also, notice from $x^k \in \cS$, $y^k \in Y^*(x^k)$, and Lemma~\ref{lem:grad} that $\nabla F^*(x^k) = \nabla_x f(x^k, y^k)$ for all $k$. In view of these and the  continuity of $\nabla_x f$, one has $\nabla^\rC_\cX F^*(x) = \nabla_x f(x, y^*)$, and hence \eqref{thm1_nablaPhi} holds. This proves statement (iv) of Theorem~\ref{thm:Phi_smooth}.
\end{proof}

We finally prove Corollary~\ref{cor:Fstar-bound}.

\begin{proof}[\textbf{Proof of Corollary~\ref{cor:Fstar-bound}}] 
Fix any $x, x'$ satisfying $[x, x'] \subseteq \cU_\epsilon$ and $\|x - x'\| \leq \gamma \epsilon^\sigma / (4L_f)$. By $\operatorname{int}(\cX) \neq \emptyset$, $x, x' \in \cX$, the convexity of $\cX$, and a similar argument as used in the proof of Lemma~\ref{lem:diff_ext}, one can see that there exist sequences $\{x^k\} \subset \operatorname{int}(\cX)$ and $\{\hx^k\} \subset \operatorname{int}(\cX)$ such that $x^k \to x$ and $\hx^k \to x'$. Then, one can deduce from $[x, x'] \subseteq \cU_\epsilon$ and the relative openness of $\cU_\epsilon$ that $[x^k, \hx^k] \subseteq \cU_\epsilon$ for all sufficiently large $k$. In addition, notice from the convexity of $\operatorname{int}(\cX)$ that $[x^k, \hx^k] \subseteq \operatorname{int}(\cX)$ for all $k$. In view of these, we may assume without loss of generality that $[x^k, \hx^k] \subseteq \cU_\epsilon \cap \operatorname{int}(\cX)$ and $\|x^k - \hx^k\| \leq \gamma \epsilon^\sigma / (2L_f)$ for all $k$. It then follows from Theorem~\ref{thm:Phi_smooth}(ii) that for any $z, z' \in [x^k, \hx^k]$,
\[
\|\nabla F^*(z) - \nabla F^*(z')\| \leq L_{\nabla f}\|z - z'\| + (1-\theta)^{-1} {C^{-1/\theta}}L_{\nabla f}^{1/\theta}\, \|z - z'\|^{\frac{1-\theta}{\theta}}.   
\]
Hence, we obtain that
\beq \label{cor1_Fstar_xk}
F^*(x^k) \leq F^*(\hx^k) + \langle \nabla F^*(\hx^k), x^k - \hx^k \rangle + \frac{1}{2} L_{\nabla f} \|x^k - \hx^k\|^2 + \frac{M}{1+\nu} \|x^k - \hx^k\|^{1+\nu},
\eeq
where $M$ and $\nu$ are defined in \eqref{Phi_smooth_constants}. Note that $\{\hx^k\} \subset \operatorname{int}(\cX)$ and $F^*$ is Lipschitz continuous on $\cX$ (see Lemma \ref{lem:Lips}). This implies that $\{\nabla F^*(\hx^k)\}$ is bounded.  By this, $\{\hx^k\} \subset \cX$, $\hx^k \to x'$, and \eqref{r-clarke}, one can observe that any limit point of $\{\nabla F^*(\hx^k)\}$ belongs to $ \partial^\rC_\cX F^*(x')$. Moreover,  $\partial^\rC_\cX F^*(x')$ is a singleton due to Theorem~\ref{thm:Phi_smooth} and $x'\in \cU_\epsilon$ . Consequently, the limit of $\{\nabla F^*(\hx^k)\}$ as $k\to \infty$ exists, and moreover, $ \lim_{k\to\infty} \nabla F^*(\hx^k)=\nabla^\rC_\cX F^*(x')$. Using this, $x^k \to x$, $\hx^k \to x'$, the continuity of $F^*$, and taking limits on both sides of \eqref{cor1_Fstar_xk} as $k \to \infty$, we conclude that \eqref{Fstar-bound} holds, which proves Corollary~\ref{cor:Fstar-bound}.
\end{proof}

\subsection{Proof of the main results in Section~\ref{sec:subsolver}}\label{sec:proof2}

In this subsection, we provide the proofs of Theorems~\ref{thm:sub_LS} and~\ref{thm:subsolver_iters}. We begin by proving Theorem~\ref{thm:sub_LS}.

\begin{proof}[\textbf{Proof of Theorem~\ref{thm:sub_LS}}] 
Suppose for contradiction that the inner loop runs for more than $\overline{i}+1$ iterations at the $k$th outer iteration. Then one can observe from Algorithm \ref{algo_implement:subsolver} that
\beq \label{aux-ineq1}
h(z^{k+1,\overline{i}}) + \frac{1}{2\lambda_{k,\overline{i}}} \|z^{k+1,\overline{i}} - z^k\|^2 > h(z^k).
\eeq
By the optimality condition for $z^{k+1, \overline{i}}$, one has 
\[
\langle \nabla g(z^k), z^{k+1, \overline{i}}-z^k \rangle + \frac{1}{\lambda_{k,i}} \|z^{k+1, \overline{i}} - z^k\|^2 + q(z^{k+1, \overline{i}}) \leq q(z^k).
\]
In addition, by the $L$-smoothness of $g$, we have
\[
g(z^{k+1, \overline{i}}) \leq g(z^k) + \langle \nabla g(z^k), z^{k+1, \overline{i}} - z^k \rangle + \frac{L}{2} \|z^{k+1, \overline{i}} - z^k\|^2.
\]
Combining these two inequalities yields
\beq \label{aux-ineq2}
h(z^{k+1, \overline{i}}) + \Big( \frac{1}{\lambda_{k, \overline{i}}} - \frac{L}{2} \Big) \|z^{k+1, \overline{i}} - z^k\|^2 \leq h(z^k).
\eeq
By the definition of $\overline{i}$ and $\lambda_{k,\overline{i}} = \overline{\lambda} \rho^{\overline{i}}$, one has $L \leq 1/\lambda_{k, \overline{i}}$. This and \eqref{aux-ineq2} imply that
\[
h(z^{k+1, \overline{i}}) + \frac{1}{2\lambda_{k, \overline{i}}} \|z^{k+1, \overline{i}} - z^k\|^2 \leq h(z^k),
\]
which contradicts \eqref{aux-ineq1}. Hence, the inner loop runs at most $\overline{i}$+1 iterations. By this and the definition of $\lambda_k$, one has  
$\min\{\rho/L, \overline{\lambda}\} \leq \overline{\lambda} \rho^{\overline{i}} \leq\lambda_k \leq \overline{\lambda}$. Hence, the conclusion of Theorem~\ref{thm:sub_LS} holds.
\end{proof}

In the remainder of this subsection, we present the proof of Theorem~\ref{thm:subsolver_iters}. To this end, we first establish several technical lemmas. The following result provides a bound on $\operatorname{dist}(0, \partial h(z^{k+1}))$ in terms of $\|z^{k+1} - z^k\|$.

\begin{lemma} \label{lem:H2_bound}
Let $z^k$ and $z^{k+1}$ be generated by Algorithm \ref{algo_implement:subsolver} for some $k\geq 0$. Then it holds that
\beq \label{lem8}
\operatorname{dist}(0, \partial h(z^{k+1})) \leq \big(L + \lambda_k^{-1}\big)
\|z^{k+1} - z^k\|.
\eeq
\end{lemma}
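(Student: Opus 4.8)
The plan is to use the first-order optimality condition for the proximal subproblem that defines $z^{k+1}$, together with the $L$-smoothness of $g$, to produce an explicit element of $\partial h(z^{k+1})$ whose norm is controlled by $\|z^{k+1}-z^k\|$.

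First I would recall that, by the definition of $z^{k+1}=z^{k+1,i}$ in line~4 of Algorithm~\ref{algo_implement:subsolver} with step size $\lambda_k$, the point $z^{k+1}$ is the exact minimizer of the strongly convex function $z \mapsto \langle \nabla g(z^k), z\rangle + \frac{1}{2\lambda_k}\|z-z^k\|^2 + q(z)$. Hence its optimality condition reads
\[
0 \in \nabla g(z^k) + \lambda_k^{-1}(z^{k+1}-z^k) + \partial q(z^{k+1}),
\]
so that $-\nabla g(z^k) - \lambda_k^{-1}(z^{k+1}-z^k) \in \partial q(z^{k+1})$. Next, since $h = g+q$ with $g$ differentiable, we have $\partial h(z^{k+1}) = \nabla g(z^{k+1}) + \partial q(z^{k+1})$. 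Adding $\nabla g(z^{k+1})$ to the displayed inclusion therefore gives the explicit element
\[
v := \nabla g(z^{k+1}) - \nabla g(z^k) - \lambda_k^{-1}(z^{k+1}-z^k) \in \partial h(z^{k+1}).
\]

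Then I would bound $\|v\|$ using the triangle inequality and the $L$-smoothness of $g$: $\|\nabla g(z^{k+1}) - \nabla g(z^k)\| \le L\|z^{k+1}-z^k\|$, while $\|\lambda_k^{-1}(z^{k+1}-z^k)\| = \lambda_k^{-1}\|z^{k+1}-z^k\|$. Combining, $\|v\| \le (L+\lambda_k^{-1})\|z^{k+1}-z^k\|$, and since $\operatorname{dist}(0,\partial h(z^{k+1})) \le \|v\|$, the claimed bound \eqref{lem8} follows.

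This argument is entirely routine; there is no real obstacle. The only minor point requiring care is that the $L$-smoothness of $g$ must be invoked on $\dom\,q$, which contains both $z^k$ and $z^{k+1}$ (the latter lies in $\dom\,q$ because it is the $\arg\min$ of a function with $q$ in it, and the former lies in $\dom\,q$ since the algorithm maintains iterates with finite $h$-value), so the gradient inequality $\|\nabla g(z^{k+1})-\nabla g(z^k)\|\le L\|z^{k+1}-z^k\|$ is applicable. Everything else is direct substitution.
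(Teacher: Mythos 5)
Your proof is correct and follows essentially the same approach as the paper: use the first-order optimality condition for the proximal step to exhibit $\nabla g(z^{k+1}) - \nabla g(z^k) - \lambda_k^{-1}(z^{k+1}-z^k)$ as an element of $\partial h(z^{k+1})$, then bound its norm via the triangle inequality and $L$-smoothness of $g$. The extra remark about $z^k, z^{k+1} \in \dom\,q$ is a fine point the paper leaves implicit, but otherwise the two arguments coincide.
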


\begin{proof}
By the optimality condition for $z^{k+1}$, one has
\[
0 \in \nabla g(z^k) + \lambda_k^{-1}(z^{k+1}-z^k) + \partial q(z^{k+1}),
\]
which implies that
\[
\nabla g(z^{k+1}) - \nabla g(z^k) - \lambda_k^{-1}(z^{k+1}-z^k) \in \partial h(z^{k+1}).
\]
Using this and the $L$-smoothness of $g$, we obtain
\[
\operatorname{dist}(0, \partial h(z^{k+1})) \leq \|\nabla g(z^{k+1}) - \nabla g(z^k) - \lambda_k^{-1}(z^{k+1}-z^k)\| 
\leq (L + \lambda_k^{-1})\|z^{k+1}-z^k\|,
\]
and hence the conclusion holds.
\end{proof}

For notational convenience, let
\beq \label{rk}
a_k = \frac{1}{2\lambda_k}, \qquad 
b_k = \left(L + \frac{1}{\lambda_k}\right)^{-1}.
\eeq
In view of these, Algorithm \ref{algo_implement:subsolver}, Theorem~\ref{thm:sub_LS}, and Lemma~\ref{lem:H2_bound}, one can observe that the following relations hold:
\begin{align} 
&h(z^{k+1}) + a_k \|z^{k+1} - z^k\|^2 \leq h(z^k), \label{H1} \\
&b_k \operatorname{dist}(0, \partial h(z^{k+1})) \leq \|z^{k+1} - z^k\|, \label{H2}  \\ 
& (2\overline{\lambda})^{-1} \leq a_k \leq (2\underline{\lambda})^{-1}, \qquad (L + \underline{\lambda}^{-1})^{-1} \leq b_k \leq (L + \overline{\lambda}^{-1})^{-1},\label{H3}
\end{align}
where $\underline{\lambda}$ is defined in \eqref{thm2_const}. 
In addition, by \eqref{H1} and the choice of $z^0$, we can observe that $r_0 \leq \delta$, and $\{r_k\}$ is nonincreasing. Consequently, $r_k \leq \delta$ holds for all $k$.

The following lemma establishes a convergence rate for Algorithm \ref{algo_implement:subsolver}, following a similar argument as in~\cite[Theorem 3.4]{frankel2015splitting}.

\begin{lemma} \label{lem:subsolver_converge}
Let $\delta, \theta, \underline{\lambda}, \underline{\beta}, \overline{\beta}, C'$  and $\overline{\lambda}$ be given in \eqref{KL_subpr}, \eqref{thm2_const}, \eqref{cor1_K}, \eqref{rk} and Algorithm \ref{algo_implement:subsolver}, respectively. Suppose that $z^k$ is generated by Algorithm \ref{algo_implement:subsolver} for some $k\geq 1$. Then the following statements hold.
\begin{enumerate} [label=(\roman*)]
\item If $\theta = 1/2$, then
\beq \label{thm2_linear}
   h(z^k) - h^*  \leq \delta e^{-\frac{\underline{\beta}}{1+\underline{\beta}} k}.
\eeq
\item If $\theta \in (1/2, 1)$, then
\beq \label{thm2_subli}
h(z^k) - h^* \leq \left( \frac{1}{C' (2\theta-1)\underline{\beta}} \right)^{\frac{1}{2\theta-1}} k^{-\frac{1}{2\theta-1}}.
\eeq
\end{enumerate}
\end{lemma}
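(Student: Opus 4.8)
\textbf{Proof plan for Lemma~\ref{lem:subsolver_converge}.}
The plan is to combine the sufficient-decrease inequality \eqref{H1}, the subgradient bound \eqref{H2}, and the KL inequality \eqref{KL_subpr} into a one-step recursion for the sequence $r_k := h(z^k) - h^*$, and then to analyze that recursion separately in the two regimes $\theta = 1/2$ and $\theta \in (1/2,1)$. First I would record that $r_k \le \delta$ for all $k \ge 0$ (from \eqref{H1} and the choice of $z^0$), so that the KL inequality \eqref{KL_subpr} applies along the whole sequence whenever $r_k > 0$; if $r_k = 0$ for some $k$ the claimed bounds hold trivially by monotonicity, so I may assume $r_k > 0$ throughout. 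Next, from \eqref{H1} I have $a_k \|z^{k+1}-z^k\|^2 \le r_k - r_{k+1}$, and from \eqref{H2} together with the KL inequality applied at $z^{k+1}$, $\|z^{k+1}-z^k\| \ge b_k \dist(0,\partial h(z^{k+1})) \ge b_k C (r_{k+1})^\theta$. Substituting the second into the first gives $a_k b_k^2 C^2 (r_{k+1})^{2\theta} \le r_k - r_{k+1}$, and using the uniform bounds \eqref{H3} on $a_k, b_k$ (namely $a_k \ge (2\overline\lambda)^{-1}$, $b_k \ge (L+\underline\lambda^{-1})^{-1}$, so $a_k b_k^2 C^2 \ge \underline\beta$ with $\underline\beta$ as in \eqref{thm2_const}), I obtain the clean recursion
\beq \label{recursion-plan}
\underline\beta\, r_{k+1}^{2\theta} \le r_k - r_{k+1}.
\eeq

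From here the two cases are routine but must be handled carefully. When $\theta = 1/2$, \eqref{recursion-plan} reads $\underline\beta r_{k+1} \le r_k - r_{k+1}$, i.e. $r_{k+1} \le \frac{1}{1+\underline\beta} r_k$, and iterating from $r_1$ (or from $r_0 \le \delta$, absorbing one factor) yields $r_k \le \delta\big(\tfrac{1}{1+\underline\beta}\big)^k \le \delta e^{-\frac{\underline\beta}{1+\underline\beta}k}$, using $\log(1+\underline\beta) \ge \frac{\underline\beta}{1+\underline\beta}$; this gives \eqref{thm2_linear}. When $\theta \in (1/2,1)$, I would follow the standard Łojasiewicz-type argument (as in~\cite[Theorem~3.4]{frankel2015splitting}): the goal is to show $r_k^{1-2\theta}$ grows at least linearly. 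Here I distinguish, at each step, whether $r_{k+1} \ge \tfrac12 r_k$ or $r_{k+1} < \tfrac12 r_k$. In the first subcase, \eqref{recursion-plan} gives $r_k - r_{k+1} \ge \underline\beta r_{k+1}^{2\theta} \ge \underline\beta (r_k/2)^{2\theta}\cdot$ (constant), and a mean-value/concavity estimate on the function $t \mapsto t^{1-2\theta}$ (which is decreasing and convex) over $[r_{k+1}, r_k]$ converts this into $r_{k+1}^{1-2\theta} - r_k^{1-2\theta} \ge c$ for an explicit constant $c$ depending on $\underline\beta$, $\theta$, and the upper bound $\delta$ on $r_k$. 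In the second subcase $r_{k+1} < r_k/2$, one gets $r_{k+1}^{1-2\theta} \ge 2^{2\theta-1} r_k^{1-2\theta} \ge r_k^{1-2\theta} + (2^{2\theta-1}-1)r_k^{1-2\theta} \ge r_k^{1-2\theta} + (2^{2\theta-1}-1)\delta^{1-2\theta}$, again a positive constant increment. Taking $C'$ as the minimum of the relevant constants (which matches the definition in \eqref{cor1_K}) gives $r_{k+1}^{1-2\theta} - r_k^{1-2\theta} \ge C'(2\theta-1)\underline\beta$ in both subcases, hence $r_k^{1-2\theta} \ge C'(2\theta-1)\underline\beta\, k$ (dropping the nonnegative $r_0^{1-2\theta}$ term or starting the count appropriately), which rearranges to \eqref{thm2_subli}.

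The main obstacle I anticipate is the bookkeeping in the $\theta \in (1/2,1)$ case: getting the constant in the per-step increment of $r_k^{1-2\theta}$ to come out exactly as $C'(2\theta-1)\underline\beta$ with $C'$ as defined requires care in the concavity estimate and in tracking where the upper bound $r_k \le \delta$ enters (it is used to lower-bound $r_k^{2\theta - 1}$ type quantities, i.e. to turn $r_{k+1}^{2\theta} = r_{k+1}\cdot r_{k+1}^{2\theta-1}$ into something comparable to $r_{k+1}$ times a constant). A secondary subtlety is the off-by-one in the indexing — whether one starts the geometric/polynomial decay from $r_0 \le \delta$ or from $r_1$ — which I would resolve by simply using $r_0 \le \delta$ as the anchor and noting $r_k \le r_1 \cdot (\cdots)$ follows a fortiori. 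Everything else (the derivation of \eqref{recursion-plan}, the elementary inequality $\log(1+t)\ge t/(1+t)$, and the convexity of $t\mapsto t^{1-2\theta}$) is standard.
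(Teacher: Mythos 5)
Your overall plan is the same as the paper's: derive the one-step recursion $\beta_\ell r_{\ell+1}^{2\theta} \le r_\ell - r_{\ell+1}$ from \eqref{H1}, \eqref{H2}, and \eqref{KL_subpr}, handle $\theta=1/2$ by iterating the contraction and using $\log(1+t)\le t$, and handle $\theta\in(1/2,1)$ via the Lyapunov function $\psi(t)=\tfrac{1}{2\theta-1}t^{1-2\theta}$ with a two-case split on how fast $r_\ell$ drops. The one place your bookkeeping will not reproduce the stated constant is the threshold for the case split: you propose splitting at $r_{k+1}\ge r_k/2$, but the paper splits at $r_{\ell+1}^{-2\theta}\le 2\,r_\ell^{-2\theta}$, i.e.\ at $r_{\ell+1}\ge 2^{-1/(2\theta)}r_\ell$. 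That choice is precisely what makes the constant in the first subcase come out as $\tfrac12$: from $\psi(r_{\ell+1})-\psi(r_\ell)=\int_{r_{\ell+1}}^{r_\ell}t^{-2\theta}\,dt \ge r_\ell^{-2\theta}(r_\ell-r_{\ell+1})$ one uses $r_\ell^{-2\theta}\ge\tfrac12 r_{\ell+1}^{-2\theta}$ and then $r_\ell-r_{\ell+1}\ge\beta_\ell r_{\ell+1}^{2\theta}$ to get exactly $\tfrac12\beta_\ell$. With your threshold $r_{k+1}\ge r_k/2$, the same chain yields only $2^{-2\theta}\beta_k$, and since $2^{-2\theta}<\tfrac12$ for $\theta>1/2$, this does not dominate $C'\beta_k$ in the regime where $C'=\tfrac12$ (small $\delta$). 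The second subcase is analogously calibrated: the complement $r_{\ell+1}<2^{-1/(2\theta)}r_\ell$ gives $r_{\ell+1}^{1-2\theta}>2^{(2\theta-1)/(2\theta)}r_\ell^{1-2\theta}$, matching the factor $2^{(2\theta-1)/(2\theta)}-1$ in the definition of $C'$, whereas your $r_{k+1}<r_k/2$ would give $2^{2\theta-1}-1$. So replace $1/2$ by $2^{-1/(2\theta)}$ as the threshold and the constants fall into place; also keep $\beta_\ell$ in the recursion (only replacing by $\underline\beta$ after summing) so that the $\overline\beta$ in the denominator of $C'$ appears for the reason the paper needs it — to normalize the second subcase's constant increment into the form $C'\beta_\ell$. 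Everything else in your plan (the trivial $r_k=0$ case, Lemma~\ref{lem:H2_bound} via optimality plus $L$-smoothness, the $\theta=1/2$ geometric decay) matches the paper's argument.
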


\begin{proof}
For notational convenience, let $r_\ell = h(z^\ell) - h^*$ for all $\ell$. Since $h(z^0)-h^* \leq \delta$ and $\{h(z^\ell)\}$ is nonincreasing, \eqref{KL_subpr} holds with $z=z^\ell$ for all $\ell \geq 0$. 
By this, \eqref{H1}, and \eqref{H2}, one has  
\[
r_\ell - r_{\ell+1} \overset{\eqref{H1}}{\geq} a_\ell \|z^{\ell+1} - z^\ell\|^2 \overset{\eqref{H2}}{\geq} a_\ell b_\ell^2 \operatorname{dist}\big(0, \partial h(z^{\ell+1})\big)^2 \overset{\eqref{KL_subpr}}{\geq} {a_\ell b_\ell^2} C^2 r_{\ell+1}^{2\theta}.
\]
Let $\beta_\ell := {a_\ell b_\ell^2} C^2$ for all $\ell$. Using \eqref{thm2_const} and \eqref{H3}, we have
\beq \label{thm2_recur}
r_\ell - r_{\ell+1} \geq \beta_\ell r_{\ell+1}^{2\theta}, \qquad \beta_\ell \in [\underline{\beta}, \overline{\beta}],
\eeq

(i) Suppose $\theta = 1/2$. It then follows from \eqref{thm2_recur} that $r_{\ell+1} \leq (1+\beta_\ell)^{-1} r_\ell$ for all $\ell$. Hence,  
\beq \label{rk-bound}
r_k \leq r_0 \prod_{\ell=0}^{k-1} (1+\beta_\ell)^{-1} \qquad \forall k \geq 0.
\eeq
By the concavity of $\log(\cdot)$,  one has that $ \log(1+t) \leq t $ for all $ t > -1$.  It follows that 
\[
\log (1+\beta_\ell)^{-1} = \log\Big(1- \frac{\beta_\ell}{1+\beta_\ell}\Big) \leq - \frac{\beta_\ell}{1+\beta_\ell}.
\]
Using this and $\beta_\ell \geq \underline{\beta}$ for all $\ell$, we obtain  
\[
\prod_{\ell=0}^{k-1} (1+\beta_\ell)^{-1} =\exp \Big(\sum_{\ell=0}^{k-1} \log (1+\beta_\ell)^{-1}\Big) 
\leq \exp  \Big(-\sum_{\ell=0}^{k-1} \frac{\beta_\ell}{1+\beta_\ell} \Big)
\leq \exp  \Big(-\frac{k \underline{\beta}}{1+\underline{\beta}}  \Big),
\]
which together with \eqref{rk-bound} and $r_0 \leq \delta$ implies that \eqref{thm2_linear} holds.

(ii) Suppose $\theta \in (1/2, 1)$.  
Clearly, \eqref{thm2_subli} holds if $r_k=0$. Now we assume that $r_k > 0$. It then follows from the monotonicity of $\{r_\ell\}$ that $r_\ell>0$ for all $0 \leq \ell < k$. Let $\psi(t) = \frac{1}{2\theta-1} t^{1-2\theta}$. Then we have
\beq \label{r-relation}
\psi(r_{\ell+1}) - \psi(r_\ell) = \int_{r_{\ell}}^{r_{\ell+1}} \psi'(t) dt = \int_{r_{\ell+1}}^{r_\ell} t^{-2\theta} dt \geq r_\ell^{-2\theta} (r_\ell - r_{\ell+1}) \qquad \forall 0 \leq \ell < k.
\eeq
For each $0 \leq \ell < k$, we consider two separate cases below.

Case a): $r_{\ell+1}^{-2\theta} \leq 2 r_\ell^{-2\theta}$. It along with \eqref{thm2_recur} and \eqref{r-relation} implies that
\[
\psi(r_{\ell+1}) - \psi(r_\ell) \geq \frac{1}{2} r_{\ell+1}^{-2\theta} (r_\ell - r_{\ell+1}) \overset{\eqref{thm2_recur}}{\geq} \frac{1}{2} \beta_\ell.
\]

Case b): $r_{\ell+1}^{-2\theta} > 2 r_\ell^{-2\theta}$. It leads to $r_{\ell+1}^{1-2\theta} >2^{\frac{2\theta-1}{2\theta}} r_\ell^{1 - 2\theta}$. By this, $r_\ell \leq \delta$, $\beta_\ell \geq \overline{\beta}$, and the expression of $\psi$, one has
\begin{align*}
\psi(r_{\ell+1}) - \psi(r_\ell) 
&= \frac{1}{2\theta-1} (r_{\ell+1}^{1-2\theta} - r_\ell^{1-2\theta}) > \frac{1}{2\theta-1} \left(2^{\frac{2\theta-1}{2\theta}} - 1\right) r_\ell^{1-2\theta} \\
&\geq \frac{1}{2\theta-1} \left(2^{\frac{2\theta-1}{2\theta}} - 1\right) \delta^{1-2\theta} \geq \frac{(2^{\frac{2\theta-1}{2\theta}} - 1) \delta^{1-2\theta}}{(2\theta-1)\, \overline{\beta}} \beta_\ell.
\end{align*}

Combining the above two cases, and using the definition of $C'$ in \eqref{thm2_const}, we obtain that $\psi(r_{\ell+1}) - \psi(r_\ell) \geq C' \beta_\ell$ for all $0 \leq \ell < k$. It then follows that 
\[
\psi(r_{k}) \geq \psi(r_0)+ C' \sum_{\ell=0}^{k-1} \beta_\ell  \geq C' \sum_{\ell=0}^{k-1} \beta_\ell.
\]
This and the expression of $\psi$ lead to
\[
r_{k} \leq \left( \frac{1}{C' (2\theta-1)} \right)^{\frac{1}{2\theta-1}} \left( \sum_{\ell=0}^{k-1} \beta_\ell \right)^{-\frac{1}{2\theta-1}} 
\leq \left( \frac{1}{C' (2\theta-1)\underline{\beta}} \right)^{\frac{1}{2\theta-1}} k^{-\frac{1}{2\theta-1}},
\]
and hence \eqref{thm2_subli} holds.
\end{proof}

We are now ready to prove Theorem~\ref{thm:subsolver_iters}.

\begin{proof}[\textbf{Proof of Theorem~\ref{thm:subsolver_iters}}] 
Suppose for contradiction that Algorithm~\ref{algo_implement:subsolver} runs for more than $\overline{K}_\theta$ outer iterations. Then there exists some $\ell \geq \overline{K}_\theta-1$ such that $\|z^{\ell+1} - z^\ell\| > \tau$. By \eqref{H1} and \eqref{H3}, one has
\beq \label{cor1_bound}
\|z^{\ell+1} - z^\ell\| \overset{\eqref{H1}}{\leq} \sqrt{\frac{r_\ell - r_{\ell+1}}{a_\ell}} \leq a_\ell^{-\frac{1}{2}} r_\ell^{\frac{1}{2}} \overset{\eqref{H3}}{\leq} (2\overline{\lambda})^{\frac{1}{2}} r_\ell^{\frac{1}{2}},
\eeq 
where $r_\ell = h(z^\ell) - h^*$. We next show that $r_\ell \leq \tau^2/(2\overline{\lambda})$ by considering two separate cases: $\theta=1/2$ and $\theta \in (1/2, 1)$.

Case (i): $\theta = 1/2$. By this, \eqref{cor1_K}, and $\ell \geq \overline{K}_\theta-1$, one has $\ell \geq \underline{\beta}^{-1} (1 + \underline{\beta}) \log (2\overline{\lambda} \delta \tau^{-2})$.  Using this relation and \eqref{thm2_linear}, we have $r_{\ell}  \leq \delta e^{ -\underline{\beta}(1 + \underline{\beta})^{-1}\ell} \leq \tau^2/(2\overline{\lambda})$.

Case (ii): $\theta \in (1/2, 1)$. Using this, \eqref{cor1_K}, and $\ell \geq \overline{K}_\theta-1$, we obtain that $\ell \geq \frac{1}{C'(2\theta-1)\underline{\beta}} \left( 2\overline{\lambda} \tau^{-2} \right)^{2\theta-1}$.  By this relation and \eqref{thm2_subli}, one has
\[
r_{\ell} \leq \left( \frac{1}{C' (2\theta-1)\underline{\beta}} \right)^{\frac{1}{2\theta-1}} \ell^{-\frac{1}{2\theta-1}} \leq \tau^2/(2\overline{\lambda}).
\]
We thus conclude that $r_\ell \leq \tau^2/(2\overline{\lambda})$. This together with \eqref{cor1_bound} implies $\|z^{\ell+1} - z^\ell\| \leq \tau$, which leads to a contradiction. Hence,  Algorithm~\ref{algo_implement:subsolver} runs at most $\overline{K}_\theta$ outer iterations.

We next show that \eqref{h-opt} holds. Notice from \eqref{lambda-bound} and \eqref{rk} that $\lambda_k \geq \underline{\lambda}$. By this and \eqref{lem8}, one has 
\beq \label{h-subdiff}
\operatorname{dist}(0, \partial h(z^{k+1})) \leq \big(L + \underline{\lambda}^{-1}\big) \|z^{k+1} - z^k\|.
\eeq
Since $h(z^0)-h^* \leq \delta$ and $\{h(z^\ell)\}$ is nonincreasing, it follows that $h(z^{k+1})-h^* \leq \delta$.  
Using this and \eqref{KL_subpr}, we have
\[
C (h(z^{k+1}) - h^*)^{\theta} \leq \operatorname{dist}(0, \partial h(z^{k+1})). 
\]
By this, \eqref{h-subdiff}, and $\|z^{k+1}-z^k\| \leq \tau$, one has
\[
h(z^{k+1}) - h^* \leq C^{-\frac{1}{\theta}}\big(\operatorname{dist}(0, \partial h(z^{k+1}))\big)^{\frac{1}{\theta}} 
\leq \big(C^{-1}(L + \underline{\lambda}^{-1})\big)^{\frac{1}{\theta}}\|z^{k+1} - z^k\|^{\frac{1}{\theta}} \leq \big(C^{-1}(L + \underline{\lambda}^{-1})\tau\big)^{\frac{1}{\theta}},
\]
and hence \eqref{h-opt} holds as desired.
\end{proof}

\subsection{Proof of the main results in Section~\ref{sec:FOD}}\label{sec:proof3}

In this subsection we prove Theorems \ref{thm:outer_bound} and \ref{thm:operations}. To proceed, we first establish several technical lemmas below.

\begin{lemma} \label{lem:subsolver_output}
Let $\gamma, \sigma, C, \theta, L_{\nabla f}, \epsilon, \cX^\rc_\epsilon$,\,and $\{\eta_\ell\}$ be given in \eqref{X-eps}, Assumption \ref{ass:Lip_Smo_KL}, and Algorithm \ref{algo_implement:whole}, respectively.  Suppose that $\{(x^\ell,y^\ell)\}^{k}_{\ell=0}$ are generated by Algorithm \ref{algo_implement:whole} for some $k \geq 1$ such that $x^{\ell} \in \cX^\rc_\epsilon$ for all $0\leq \ell < k$. Then, for all $0\leq \ell \leq k$,  it holds that
\begin{align}
& F^*(x^\ell) - F(x^\ell, y^\ell) \leq \min\big\{\gamma \epsilon^\sigma/2, \eta_{\ell} \big\}, \qquad \operatorname{dist}\big(y^\ell, Y^*(x^\ell)\big) \leq \frac{1}{C(1-\theta)} \min\big\{(\gamma/2)^{1-\theta} \epsilon^{\sigma (1-\theta)}, \eta_\ell^{1/2}\big\}, \label{lem9-ineq1-1}  \\ 
& \|\nabla^\rC_\cX F^*(x^\ell) -\nabla_x f(x^\ell, y^\ell)\| \leq \frac{L_{\nabla f}}{C(1-\theta)} \min\big\{(\gamma/2)^{1-\theta}\epsilon^{\sigma (1-\theta)},\eta_\ell^{1/2}\big\}. \label{lem9-ineq1-2} 
\end{align}
\end{lemma}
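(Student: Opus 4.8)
The plan is to prove by induction on $\ell$ the single \emph{reinforced} estimate
\[
F^*(x^\ell)-F(x^\ell,y^\ell)\le \min\big\{\tfrac12\gamma\epsilon^\sigma,\ \eta_\ell^{1/(2(1-\theta))}\big\}\qquad(0\le \ell\le k),
\]
and then to read off the three displayed bounds from it. Before the induction I would record one structural fact used throughout: every iterate $x^\ell$ with $0\le\ell\le k$ lies in $\cU_\epsilon$. Indeed $x^0\in\cX^\rc_\epsilon\subseteq\cU_\epsilon$, and for $1\le\ell\le k$ the point $x^\ell$ is produced from $x^{\ell-1}\in\cX^\rc_\epsilon$ by the constrained step \eqref{step:constrained_PG}, so $\|x^\ell-x^{\ell-1}\|\le r=\gamma\epsilon^\sigma/(4L_f)$; since $\dist(x^{\ell-1},\cX_\epsilon)>r$ by \eqref{X-eps}, the triangle inequality gives $\dist(x^\ell,\cX_\epsilon)>0$, i.e.\ $\dist(0,\partial\Psi(x^\ell))>\epsilon$, hence $x^\ell\in\cU_\epsilon$. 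This is exactly where the hypothesis $x^\ell\in\cX^\rc_\epsilon$ for $\ell<k$ is used, and it is the only route to the borderline index $\ell=k$.

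Granting the reinforced estimate at a fixed $\ell$, the first bound in \eqref{lem9-ineq1-1} is immediate since $\theta\ge1/2$ gives $1/(2(1-\theta))\ge1$ and $\eta_\ell=1/(\ell+1)\le1$, so $\eta_\ell^{1/(2(1-\theta))}\le\eta_\ell$. For the second bound: if $F^*(x^\ell)=F(x^\ell,y^\ell)$ then $y^\ell\in Y^*(x^\ell)$ and it is trivial; otherwise $0<F^*(x^\ell)-F(x^\ell,y^\ell)\le\tfrac12\gamma\epsilon^\sigma<\gamma\,\dist(0,\partial\Psi(x^\ell))^\sigma$ because $x^\ell\in\cU_\epsilon$, so $y^\ell\in\cL(x^\ell)$ and Lemma~\ref{lem:QG} yields $\dist(y^\ell,Y^*(x^\ell))\le(C(1-\theta))^{-1}\big(F^*(x^\ell)-F(x^\ell,y^\ell)\big)^{1-\theta}$; raising the reinforced estimate to the power $1-\theta$ and using $(\min\{a,b\})^{1-\theta}=\min\{a^{1-\theta},b^{1-\theta}\}$ together with $(\eta_\ell^{1/(2(1-\theta))})^{1-\theta}=\eta_\ell^{1/2}$ gives precisely the claimed bound. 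Finally, for \eqref{lem9-ineq1-2}: since $x^\ell\in\cU_\epsilon$, Theorem~\ref{thm:Phi_smooth} gives $\nabla F^*(x^\ell)=\nabla_xf(x^\ell,y^*)$ for every $y^*\in Y^*(x^\ell)$; choosing $y^*$ a nearest point of the nonempty closed set $Y^*(x^\ell)$ to $y^\ell$ and using the $L_{\nabla f}$-smoothness of $f$ in $y$ gives $\|\nabla F^*(x^\ell)-\nabla_xf(x^\ell,y^\ell)\|\le L_{\nabla f}\,\dist(y^\ell,Y^*(x^\ell))$, and the second bound finishes it.

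It remains to establish the reinforced estimate. The base case $\ell=0$ holds since $\eta_0=1$ and the initialization in Algorithm~\ref{algo_implement:whole} requires $F^*(x^0)-F(x^0,y^0)\le\min\{\tfrac12\gamma\epsilon^\sigma,1\}$. For the inductive step, assume it at some $\ell<k$ and look at iteration $\ell$, which runs Algorithm~\ref{algo_implement:subsolver} on $h:=-F(x^{\ell+1},\cdot)=-f(x^{\ell+1},\cdot)+q$ from $z^0=y^\ell$; here $h^*=-F^*(x^{\ell+1})$, $\partial h=-\partial_yF(x^{\ell+1},\cdot)$, $x^{\ell+1}\in\cX\subseteq\Omega$, and $g:=-f(x^{\ell+1},\cdot)$ is $L_{\nabla f}$-smooth on $\cY$. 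Combining Lemma~\ref{lem:Lips}, the $L_f$-Lipschitz continuity of $f(\cdot,y)$, $\|x^{\ell+1}-x^\ell\|\le r$, and the inductive hypothesis,
\[
F^*(x^{\ell+1})-F(x^{\ell+1},y^\ell)\le L_f\|x^{\ell+1}-x^\ell\|+\big(F^*(x^\ell)-F(x^\ell,y^\ell)\big)+L_f\|x^{\ell+1}-x^\ell\|\le \tfrac14\gamma\epsilon^\sigma+\tfrac12\gamma\epsilon^\sigma+\tfrac14\gamma\epsilon^\sigma=\gamma\epsilon^\sigma,
\]
so $z^0$ is admissible for Algorithm~\ref{algo_implement:subsolver} with $\delta:=\gamma\epsilon^\sigma$. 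Moreover, $x^{\ell+1}\in\cU_\epsilon$ gives $\{z:0<F^*(x^{\ell+1})-F(x^{\ell+1},z)\le\gamma\epsilon^\sigma\}\subseteq\cL(x^{\ell+1})$, so Assumption~\ref{ass:Lip_Smo_KL}(iii) shows $h$ satisfies \eqref{KL_subpr} with this $\delta$. Hence Theorem~\ref{thm:subsolver_iters} applies, and recalling the choice of $\tau$ in line~5 of Algorithm~\ref{algo_implement:whole} together with $\underline{\lambda}=\min\{\rho/L_{\nabla f},\overline{\lambda}\}$, estimate \eqref{h-opt} gives
\[
F^*(x^{\ell+1})-F(x^{\ell+1},y^{\ell+1})\le\big(C^{-1}(L_{\nabla f}+\underline{\lambda}^{-1})\tau\big)^{1/\theta}=\min\big\{\tfrac12\gamma\epsilon^\sigma,\ \eta_{\ell+1}^{1/(2(1-\theta))}\big\},
\]
which is the reinforced estimate at $\ell+1$ and closes the induction.

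I expect the main obstacle to be verifying the two hypotheses of Theorem~\ref{thm:subsolver_iters} inside the inductive step: that $z^0=y^\ell$ lies in the prescribed sublevel set of the subproblem, and that the subproblem objective inherits the KL inequality \eqref{KL_subpr} on a sublevel set of the \emph{fixed} size $\gamma\epsilon^\sigma$. Both rest on propagating approximate optimality from $x^\ell$ to $x^{\ell+1}$ across a step of length at most $r$, and on the inclusion of the $\gamma\epsilon^\sigma$-sublevel set of $F(x^{\ell+1},\cdot)$ in $\cL(x^{\ell+1})$, which itself needs $x^{\ell+1}\in\cU_\epsilon$. A minor but essential subtlety is to carry the reinforced exponent $1/(2(1-\theta))$ through the induction rather than the plainer exponent $1$, since only the former survives the power-$(1-\theta)$ passage to the $\dist(y^\ell,Y^*(x^\ell))$ bound when $\theta>1/2$; everything else is bookkeeping with the constants.
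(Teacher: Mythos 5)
Your proof is correct and follows essentially the same route as the paper. Both arguments hinge on (a) establishing that every iterate $x^\ell$ for $0\le\ell\le k$ lies in $\cU_\epsilon$ via the step-size bound $r=\gamma\epsilon^\sigma/(4L_f)$ and the hypothesis $x^{\ell-1}\in\cX^\rc_\epsilon$, (b) invoking Theorem~\ref{thm:subsolver_iters} with the carefully chosen $\tau$ to get the reinforced decrease $F^*(x^\ell)-F(x^\ell,y^\ell)\le\min\{\gamma\epsilon^\sigma/2,\eta_\ell^{1/(2(1-\theta))}\}$ for $\ell\ge 1$, (c) using $x^\ell\in\cU_\epsilon$ to place $y^\ell$ in $\cL(x^\ell)$ and applying Lemma~\ref{lem:QG} to convert to a distance bound, and (d) combining Theorem~\ref{thm:Phi_smooth} with the $L_{\nabla f}$-smoothness of $f$ to bound the gradient gap.

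The one genuine (if minor) divergence: for $\ell=0$ the paper reads the distance bound in \eqref{lem9-ineq1-1} directly off the second initialization condition on $(x^0,y^0)$, whereas you derive it from the first initialization condition via Lemma~\ref{lem:QG} and the membership $y^0\in\cL(x^0)$ (which needs $x^0\in\cU_\epsilon$). Your route makes the second initialization condition logically redundant for this lemma's conclusion, which is a small gain in economy; the paper's route is shorter because it accepts the initialization at face value. Your version also makes the verification of the hypotheses of Theorem~\ref{thm:subsolver_iters} --- that $y^\ell$ lies in the prescribed sublevel set of $h=-F(x^{\ell+1},\cdot)$ and that this $h$ inherits the KL inequality \eqref{KL_subpr} with $\delta=\gamma\epsilon^\sigma$ --- explicit inside the induction; the paper's proof of this lemma invokes Theorem~\ref{thm:subsolver_iters} and leaves that verification implicit (it is spelled out later, inside the proof of Theorem~\ref{thm:outer_bound}). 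Both are defensible expository choices; yours is more self-contained.
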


\begin{proof}
We first show that \eqref{lem9-ineq1-1} and \eqref{lem9-ineq1-2} hold for $\ell=0$. One can observe from Algorithm~\ref{algo_implement:whole} that the first relation in \eqref{lem9-ineq1-1} holds for $\ell=0$. We now show that the second relation in \eqref{lem9-ineq1-1} and \eqref{lem9-ineq1-2} also hold for $\ell=0$. By the assumption in this lemma, we know that $x^0\in \cX^\rc_\epsilon$, and hence $\dist(0, \partial \Psi(x^0)) > \epsilon$. Using this, $F^*(x^0) - F(x^0, y^0) \leq \min\{\gamma \epsilon^\sigma/2, 1\}$, and \eqref{Lev_set}, we see that $y^0 \in \cL(x^0)$ or $y^0 \in Y^*(x^0)$. In view of these, $x^0\in\cX$, and $\theta \in [1/2, 1)$, it follows from Lemma~\ref{lem:QG} that
\beq \label{lem9_dist_y0}
\dist(y^0, Y^*(x^0)) \leq \frac{1}{C(1-\theta)} (F^*(x^0) - F(x^{0}, y^{0}))^{1-\theta} \leq \frac{1}{C(1-\theta)} \min\big\{(\gamma/2)^{1-\theta} \epsilon^{\sigma (1-\theta)}, 1\big\}.
\eeq
This together with $\eta_0 = 1$ implies that the second relation in \eqref{lem9-ineq1-1} holds for $\ell = 0$. Moreover, one can see from $x^0 \in \cX$, $\dist(0, \partial \Psi(x^0)) > \epsilon$ and \eqref{U-eps} that $x^0\in\cU_\epsilon$. It then follows from Theorem \ref{thm:Phi_smooth} that $\nabla^\rC_\cX F^*(x^0)=\nabla_x f(x^0,y^*)$, where $y^*\in Y^*(x^0)$ with $\|y^*-y^0\|=\dist(y^0, Y^*(x^0))$. Using this, \eqref{lem9_dist_y0}, and the $L_{\nabla f}$-smoothness of $f$, one has
\begin{align*}
&\|\nabla^\rC_\cX F^*(x^0) - \nabla_x f(x^0,y^0)\| = \|\nabla_x f(x^0,y^*)-\nabla_x f(x^0,y^0)\| \leq L_{\nabla f}\|y^*-y^0\| \\
& =L_{\nabla f} \, \dist(y^0, Y^*(x^0)) \leq \frac{L_{\nabla f}}{C(1-\theta)} \min\big\{(\gamma/2)^{1-\theta}\epsilon^{\sigma(1-\theta)}, 1\big\}.
\end{align*}
This along with $\eta_0 = 1$ implies that \eqref{lem9-ineq1-2} also holds for $\ell = 0$.

We next show that \eqref{lem9-ineq1-1} and \eqref{lem9-ineq1-2} hold for $0<\ell \leq k$.  Notice from Algorithm \ref{algo_implement:whole} that $y^{\ell}$ is an approximate solution of the problem $\min_y \{-f(x^{\ell},y)+q(y)\}$ obtained by Algorithm~\ref{algo_implement:subsolver} with the initial point $y^{\ell-1}$, and the parameters $\overline{\lambda}, \rho, \tau$ specified in Algorithm~\ref{algo_implement:whole}. Then it follows from $\tau =\frac{C}{L_{\nabla f}+\underline{\lambda}^{-1}} \min\Big\{ (\tfrac{1}{2} \gamma \epsilon^\sigma)^{\theta}, \eta_{\ell}^{\frac{\theta}{2(1-\theta)}} \Big\}$, 
the definitions of $F^*$ and $F$, and Theorem \ref{thm:subsolver_iters} with $h(\cdot)=-f(x^{\ell},\cdot)+q(\cdot)$ that
\beq \label{opt-x-ell}
F^*(x^{\ell})-F(x^{\ell},y^{\ell}) \leq [C^{-1}(L _{\nabla f}+ \underline{\lambda}^{-1})\tau]^{\frac{1}{\theta}} =   \min \Big\{ \frac{1}{2} \gamma \epsilon^\sigma, \eta_{\ell}^{\frac{1}{2(1-\theta)}} \Big\}.
\eeq
This together with $\eta_\ell \in (0,1)$ and $\theta \in [1/2, 1)$ implies that the first relation in \eqref{lem9-ineq1-1} holds for $\ell>0$. In addition, notice from the assumption that $x^{\ell-1} \in \cX^\rc_\epsilon$. Also, observe from Algorithm~\ref{algo_implement:whole} that $r = \gamma \epsilon^\sigma / (4L_f)$ and $\|x^{\ell}-x^{\ell-1}\| \leq r$. It then follows that $\|x^{\ell}-x^{\ell-1}\| \leq \gamma \epsilon^\sigma / (4L_f)$,
which together with $x^{\ell-1} \in \cX^\rc_\epsilon$ implies that $\dist(0, \partial \Psi(x^\ell)) > \epsilon$. Using this and \eqref{opt-x-ell}, we obtain that $F^*(x^{\ell})-F(x^{\ell},y^{\ell}) \leq \gamma \dist(0,\partial \Psi(x^\ell))^\sigma$ and hence
$y^{\ell}\in \cL(x^\ell)$, where $\cL(\cdot)$ is defined in \eqref{Lev_set}. By this, \eqref{opt-x-ell}, $x^\ell \in \cX$, and Lemma \ref{lem:QG}, one can conclude that the second relation in \eqref{lem9-ineq1-1} holds for $\ell>0$.  Lastly, \eqref{lem9-ineq1-2} also holds for $\ell>0$, due to the second relation in \eqref{lem9-ineq1-1} and arguments similar to those used in the case $\ell=0$.  
\end{proof}

\begin{lemma} \label{lem:gen_converge}
Let $\epsilon>0$ be given, $M$, $\cX^\rc_\epsilon$ be defined in \eqref{Phi_smooth_constants} and \eqref{X-eps}, $L_f, L_{\nabla f}, C, \theta, \gamma, \sigma, \{\delta_\ell\}, \{\eta_\ell\}, \{L_\ell\}$ be given in Assumption \ref{ass:Lip_Smo_KL} and Algorithm~\ref{algo_implement:whole}, and let 
\begin{align}
& \Delta_k := 8 \Big[\Psi(x^0) - \Psi^* + \eta_{k+1} + \sum_{\ell=0}^{k}  \Big(1+\tfrac{L_{\nabla f}^2}{(1-\theta)^2 C^2 L_\ell} \Big)\eta_\ell + \sum_{\ell=0}^{k}\frac{\delta_\ell}{2} \Big],  \label{thm4_Deltak} \\
&\underline{K}_\epsilon := \max\{k \geq 1: \Delta_k/(kL_{\lceil k/2 \rceil}) \geq \gamma^2 \epsilon^{2\sigma} / (16L_f^2)\},  \label{K-eps-low}\\ 
& {\overline K}_\epsilon := \max \{k \geq 0: x^k \in \cX^\rc_\epsilon \}, \label{K-eps} \\
& \ell(k) := \mathop{\arg\min}_{\lceil k/2 \rceil \leq \ell \leq k} L_\ell \|x^{\ell+1} - x^\ell\|^2.  \label{thm4_def_hatk}
\end{align}
Let $\underline{K}_\epsilon < k \leq \overline{K}_\epsilon$ be given. Suppose that $\{(x^\ell,y^\ell)\}^{k}_{\ell=0}$ are generated by Algorithm \ref{algo_implement:whole} such that $x^\ell \in \cX^\rc_\epsilon$ for all 
$0 \leq \ell \leq k$. Then we have
\begin{equation} \label{eq:dist_to_stationary}
\operatorname{dist}\big(0, \partial \Psi(x^{\ell(k)+1})\big) \leq L_{\nabla f} \sqrt{ \frac{\Delta_k}{L_{\lceil k/2 \rceil} k} } + \sqrt{ \frac{L_{k} \Delta_k}{k} } + M \Big( \frac{\Delta_k}{L_{\lceil k/2 \rceil}k} \Big)^{\frac{\nu}{2}} + (1-\theta)^{-1} C^{-1} L_{\nabla f} \eta_{\lceil k/2 \rceil}^{\frac{1}{2}}.
\end{equation}
\end{lemma}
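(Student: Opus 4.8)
The plan is a standard inexact proximal‑gradient descent‑lemma‑plus‑averaging argument, adapted to the trust‑region update \eqref{step:constrained_PG} and to the local H\"older smoothness of $F^*$ from Theorem~\ref{thm:Phi_smooth}.

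\emph{Step 1: a per‑iteration decrease.} I would first establish that for every $0\le\ell\le k$,
\[
\Psi(x^{\ell+1}) \le \Psi(x^\ell) - \tfrac{L_\ell}{4}\|x^{\ell+1}-x^\ell\|^2 + \tfrac{L_{\nabla f}^2}{(1-\theta)^2 C^2 L_\ell}\,\eta_\ell + \tfrac{\delta_\ell}{2}.
\]
Since $x^\ell\in\cX^\rc_\epsilon$ and $x^{\ell+1}\in\cX\cap\cB(x^\ell,r)$, the H\"older upper bound \eqref{Fstar-bound} applies at $x'=x^\ell$ with $\delta=\delta_\ell$, and with the prescribed $L_\ell$ it yields the usual quadratic overestimate of $F^*(x^{\ell+1})$. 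Adding $p(x^{\ell+1})$, using that $x^{\ell+1}$ minimizes the $L_\ell$‑strongly convex objective in \eqref{step:constrained_PG} (test against $x^\ell$), then applying Young's inequality to the inexact‑gradient cross term $\langle\nabla F^*(x^\ell)-\nabla_x f(x^\ell,y^\ell),x^{\ell+1}-x^\ell\rangle$ together with the gradient‑error estimate \eqref{lem9-ineq1-2}, produces the displayed inequality.

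\emph{Step 2: telescoping and averaging.} Summing over $\ell=0,\dots,k$, using $\Psi(x^{k+1})\ge\Psi^*$, and recognizing the definition \eqref{thm4_Deltak} of $\Delta_k$ (whose extra $\eta_{k+1}+\sum_\ell\eta_\ell$ terms are pure slack at this stage), I would get $\sum_{\ell=0}^k L_\ell\|x^{\ell+1}-x^\ell\|^2\le\Delta_k/2$. Since $\ell(k)$ minimizes $L_\ell\|x^{\ell+1}-x^\ell\|^2$ over the range $\lceil k/2\rceil\le\ell\le k$, which has $\lfloor k/2\rfloor+1\ge k/2$ elements, this gives $L_{\ell(k)}\|x^{\ell(k)+1}-x^{\ell(k)}\|^2\le\Delta_k/k$. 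Because $\nu\in(0,1]$ forces the exponent $(\nu-1)/(1+\nu)\le0$ while $\delta_\ell=1/(\ell+1)$ decreases, the sequence $\{L_\ell\}$ is non‑decreasing, so $L_{\lceil k/2\rceil}\le L_{\ell(k)}\le L_k$; hence $\|x^{\ell(k)+1}-x^{\ell(k)}\|^2\le\Delta_k/(L_{\lceil k/2\rceil}k)$ and $L_{\ell(k)}\|x^{\ell(k)+1}-x^{\ell(k)}\|\le\sqrt{L_k\Delta_k/k}$.

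\emph{Step 3: interiority of the prox step and the subgradient inclusion.} This is where $\underline{K}_\epsilon$ enters. Since $k>\underline{K}_\epsilon$, its definition \eqref{K-eps-low} together with $r=\gamma\epsilon^\sigma/(4L_f)$ forces $\|x^{\ell(k)+1}-x^{\ell(k)}\|^2\le\Delta_k/(L_{\lceil k/2\rceil}k)<r^2$, so $x^{\ell(k)+1}$ lies in the \emph{interior} of $\cB(x^{\ell(k)},r)$ and the first‑order optimality condition of \eqref{step:constrained_PG} carries no trust‑region normal‑cone term: $-\nabla_x f(x^{\ell(k)},y^{\ell(k)})-L_{\ell(k)}(x^{\ell(k)+1}-x^{\ell(k)})\in\partial p(x^{\ell(k)+1})$. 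Moreover, $x^{\ell(k)}\in\cX^\rc_\epsilon$ means $\operatorname{dist}(x^{\ell(k)},\cX_\epsilon)>r$, and with $\|x^{\ell(k)+1}-x^{\ell(k)}\|<r$ convexity of $\cX$ shows $[x^{\ell(k)},x^{\ell(k)+1}]\subset\cU_\epsilon$; hence by Theorem~\ref{thm:Phi_smooth} $F^*$ is differentiable on this segment, the Clarke sum rule (as in the proof of Lemma~\ref{lem:open}) gives $\partial\Psi(x^{\ell(k)+1})=\nabla F^*(x^{\ell(k)+1})+\partial p(x^{\ell(k)+1})$, and \eqref{thm1_HolderU} applies to the pair $(x^{\ell(k)},x^{\ell(k)+1})$ since $\|x^{\ell(k)+1}-x^{\ell(k)}\|\le r\le\gamma\epsilon^\sigma/(2L_f)$.

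\emph{Step 4: assembling the bound.} From Step 3, $\nabla F^*(x^{\ell(k)+1})-\nabla_x f(x^{\ell(k)},y^{\ell(k)})-L_{\ell(k)}(x^{\ell(k)+1}-x^{\ell(k)})\in\partial\Psi(x^{\ell(k)+1})$, so by the triangle inequality $\operatorname{dist}(0,\partial\Psi(x^{\ell(k)+1}))$ is at most
\[
\|\nabla F^*(x^{\ell(k)+1})-\nabla F^*(x^{\ell(k)})\| + \|\nabla F^*(x^{\ell(k)})-\nabla_x f(x^{\ell(k)},y^{\ell(k)})\| + L_{\ell(k)}\|x^{\ell(k)+1}-x^{\ell(k)}\|.
\]
I would bound the first term by $L_{\nabla f}\|x^{\ell(k)+1}-x^{\ell(k)}\|+M\|x^{\ell(k)+1}-x^{\ell(k)}\|^\nu$ via \eqref{thm1_HolderU}; the second by $(1-\theta)^{-1}C^{-1}L_{\nabla f}\eta_{\ell(k)}^{1/2}\le(1-\theta)^{-1}C^{-1}L_{\nabla f}\eta_{\lceil k/2\rceil}^{1/2}$ via \eqref{lem9-ineq1-2} together with $\ell(k)\ge\lceil k/2\rceil$ and the monotonicity of $\{\eta_\ell\}$; and then substitute the two estimates from Step 2 into the remaining occurrences of $\|x^{\ell(k)+1}-x^{\ell(k)}\|$ and into the third term. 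This gives exactly \eqref{eq:dist_to_stationary}.

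The main obstacle is Step 3: one must argue that the proximal iterate stays strictly inside the trust region so that the optimality condition of \eqref{step:constrained_PG} reduces to a clean inclusion in $\partial p(x^{\ell(k)+1})$ — this is precisely the property that the threshold $\underline{K}_\epsilon$ in \eqref{K-eps-low} was engineered to guarantee, in combination with the averaging estimate of Step 2. Steps 1 and 2 are the routine inexact proximal‑gradient descent lemma and a standard averaging/bookkeeping argument, relying only on the monotonicity of $\{\eta_\ell\}$ and $\{L_\ell\}$ and on $2\lceil k/2\rceil\ge k$.
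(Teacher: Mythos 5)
Your proposal is correct and tracks the paper's own proof very closely: per‑iteration decrease via the H\"older overestimate \eqref{Fstar-bound} and the optimality test at $x^\ell$, telescoping and averaging over $\ell\in[\lceil k/2\rceil,k]$, then using $k>\underline{K}_\epsilon$ to keep the prox step strictly inside $\cB(x^\ell,r)$ so that the optimality inclusion contains no normal‑cone term, and finally the three‑term triangle bound with \eqref{thm1_HolderU} and \eqref{lem9-ineq1-2}. The only (cosmetic) deviation is in Step~1: you write the descent directly in terms of $\Psi(x^\ell)$, whereas the paper uses the computable surrogate $F(x^\ell,y^\ell)+p(x^\ell)$ and consequently picks up an additional $\eta_\ell$ per step and an $\eta_{k+1}$ when it re‑expresses $\Psi(x^0)-\Psi^*$; your route is cleaner and makes those two pieces of $\Delta_k$ pure slack, as you correctly observe, so the final bound \eqref{eq:dist_to_stationary} still goes through with the paper's $\Delta_k$.
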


\begin{proof}
Notice from the above assumption that $\underline{K}_\epsilon < k \leq \overline{K}_\epsilon$ and $x^\ell \in \cX^\rc_\epsilon$ for all 
$0 \leq \ell \leq k$. We first show that for all $0 \leq \ell \leq k$, it holds that
\beq \label{thm4_descent}
F(x^{\ell+1}, y^{\ell+1}) + p(x^{\ell+1}) \leq F(x^\ell, y^\ell) + p(x^\ell) - \frac{L_\ell}{4} \|x^{\ell+1} - x^\ell\|^2 + \Big(1+\frac{L_{\nabla f}^2}{(1-\theta)^2 C^2 L_\ell}\Big)\eta_\ell + \frac{\delta_\ell}{2}.
\eeq
To this end, let us fix any $0 \leq \ell \leq k$. By optimality condition of \eqref{step:constrained_PG}, one has
\beq  \label{argmin_constrained}
\langle \nabla_x f(x^\ell, y^\ell), x^{\ell+1} \rangle + L_\ell \|x^{\ell+1} - x^\ell\|^2 + p(x^{\ell+1}) \leq \langle \nabla_x f(x^\ell, y^\ell), x^\ell \rangle + p(x^\ell).
\eeq
Observe from Algorithm~\ref{algo_implement:whole} that  $\|x^{\ell+1}-x^\ell\| \leq \gamma \epsilon^\sigma / (4L_f)$.
Using this relation, $x^\ell \in \cX^\rc_\epsilon$, and the definition of $ \cX^\rc_\epsilon$ in \eqref{X-eps},  we deduce that $\dist(0, \partial \Psi(x))>\epsilon$ for any $x\in [x^\ell, x^{\ell+1}]$. In addition, by $x^\ell, x^{\ell+1} \in\cX$, and the convexity of $\cX$, one can see that $[x^\ell, x^{\ell+1}]\subseteq \cX$. It follows from these and \eqref{U-eps} that $[x^\ell, x^{\ell+1}]\subseteq\cU_\epsilon$. In view of this, \eqref{sec4-Fstar-bound}, and the definition of $L_\ell$ in Algorithm~\ref{algo_implement:whole}, we have
\beq \label{Phi_kUpper}
F^*(x^{\ell+1}) \leq F^*(x^\ell) + \langle \nabla^\rC_\cX F^*(x^\ell), x^{\ell+1} - x^\ell \rangle + \frac{L_\ell}{2} \|x^{\ell+1} - x^\ell\|^2 + \frac{\delta_\ell}{2}.
\eeq
In addition, notice that $F(x^{\ell+1}, y^{\ell+1}) \leq F^*(x^{\ell+1})$. Using this, \eqref{lem9-ineq1-1}, \eqref{lem9-ineq1-2}, \eqref{argmin_constrained}, and \eqref{Phi_kUpper}, we obtain that
\begin{align}
&F(x^{\ell+1}, y^{\ell+1})+ p(x^{\ell+1}) \leq {F^*}(x^{\ell+1}) + p(x^{\ell+1}) \nn \\
&\overset{\eqref{Phi_kUpper}}{\leq} {F^*}(x^\ell) + \langle \nabla^\rC_\cX F^*(x^\ell), x^{\ell+1} - x^\ell \rangle + \frac{L_{\ell}}{2} \|x^{\ell+1} - x^\ell\|^2 + p(x^{\ell+1}) + \frac{\delta_\ell}{2} \nn \\
&= F(x^\ell, y^\ell) + \langle \nabla_x f(x^\ell, y^\ell), x^{\ell+1} - x^\ell \rangle + \frac{L_\ell}{2} \|x^{\ell+1} - x^\ell\|^2 + p(x^{\ell+1})  + F^*(x^\ell) - F(x^\ell, y^\ell) \nn \\
&\quad + \langle \nabla^\rC_\cX F^*(x^\ell) - \nabla_x f(x^\ell, y^\ell), x^{\ell+1} - x^\ell \rangle + \frac{\delta_\ell}{2} \nn \\
&\overset{\eqref{lem9-ineq1-1}\eqref{argmin_constrained}}{\leq} F(x^\ell, y^\ell) + p(x^\ell) - \frac{L_\ell}{2} \|x^{\ell+1} - x^\ell\|^2 + \eta_\ell + \langle \nabla^\rC_\cX F^*(x^\ell) - \nabla_x f(x^\ell, y^\ell), x^{\ell+1} - x^\ell \rangle + \frac{\delta_\ell}{2}\nn \\
&= F(x^\ell, y^\ell) + p(x^\ell) - \frac{L_{\ell}}{4} \|x^{\ell+1} - x^\ell\|^2 - \frac{L_{\ell}}{4} \|x^{\ell+1} - x^\ell\|^2 + \langle \nabla^\rC_\cX F^*(x^\ell) - \nabla_x f(x^\ell, y^\ell), x^{\ell+1} - x^\ell \rangle + \eta_\ell + \frac{\delta_\ell}{2} \nn \\
&\leq F(x^\ell, y^\ell) + p(x^\ell) - \frac{L_{\ell}}{4} \|x^{\ell+1} - x^\ell\|^2 + \frac{\|\nabla^\rC_\cX F^*(x^\ell) - \nabla_x f(x^\ell, y^\ell)\|^2}{L_{\ell}} + \eta_\ell + \frac{\delta_\ell}{2} \nn \\
&\overset{\eqref{lem9-ineq1-2}}{\leq} F(x^\ell, y^\ell) + p(x^\ell) - \frac{L_{\ell}}{4} \|x^{\ell+1} - x^\ell\|^2 + \Big(1+\frac{L_{\nabla f}^2}{(1-\theta)^2 C^2 L_{\ell}}\Big)\eta_\ell + \frac{\delta_\ell}{2}, \nn
\end{align}
where the fourth inequality follows from the Young's inequality $\langle u,v\rangle \le \alpha\|u\|^2/4 + \|v\|^2/\alpha$ for all $\alpha>0$ and $u,v\in\bR^n$. By this and the arbitrariness  of $\ell$, we see that \eqref{thm4_descent} holds for all $0 \leq \ell \leq k$.

Summing up \eqref{thm4_descent} over $\ell =0, \ldots, k $ yields
\beq \label{thm4_ineq1}
\begin{aligned}
\sum_{\ell=\lceil k/2 \rceil}^k L_\ell \|x^{\ell+1} - x^\ell\|^2 
&\leq 4 \Big[ F(x^0, y^0) + p(x^0) - F(x^{k+1}, y^{k+1}) - p(x^{k+1}) + \sum_{\ell=0}^{k} \Big(1+\tfrac{L_{\nabla f}^2}{(1-\theta)^2 C^2 L_\ell}\Big)\eta_\ell + \sum_{\ell=0}^{k} \frac{\delta_\ell}{2} \Big].
\end{aligned}
\eeq
For notational convenience, let $\hat{k}=\ell(k)$. By this, \eqref{thm4_Deltak}, and \eqref{thm4_def_hatk}, one has
\begin{align}
    &L_{\hat{k}} \|x^{\hat{k}+1} - x^{\hat{k}}\|^2 
    \overset{\eqref{thm4_def_hatk}}{\leq} \frac{1}{\lceil k/2 \rceil} \sum_{\ell=\lceil  k/2 \rceil}^k L_\ell \|x^{\ell+1} - x^\ell\|^2 \nn \\
    &\overset{\eqref{thm4_ineq1}}{\leq} \frac{4}{\lceil  k/2 \rceil} \Big[F(x^0, y^0) + p(x^0) - F(x^{k+1}, y^{k+1}) - p(x^{k+1}) + \sum_{\ell=0}^{k} \Big(1 + \tfrac{L_{\nabla f}^2}{(1-\theta)^2 C^2 L_\ell} \Big) \eta_\ell + \sum_{\ell=0}^{k} \frac{\delta_\ell}{2}\Big] \nn \\
    &\leq \frac{8}{k} \Big[ \Psi(x^0) - \Psi^* + \eta_{k+1} + \sum_{\ell=0}^{k} \Big(1 + \tfrac{L_{\nabla f}^2}{(1-\theta)^2 C^2 L_\ell} \Big) \eta_\ell + \sum_{\ell=0}^{k} \frac{\delta_\ell}{2} \Big] \overset{\eqref{thm4_Deltak}}{=} \frac{\Delta_k}{k}, \label{thm4_afterSum}
\end{align}
where the last inequality uses the fact that $F(x^0, y^0) \leq F^*(x^0)$, $\Psi(\cdot)=F^*(\cdot)+p(\cdot)$,  $\Psi(x^{k+1}) \geq \Psi^*$, and  $F^*(x^{k+1}) - F(x^{k+1}, y^{k+1}) \leq \eta_{k+1}$ due to \eqref{lem9-ineq1-1}. Additionally, by $\hat{k} \geq \lceil k/2 \rceil$, the monotonicity of $\{\delta_\ell\}$, $\nu \in (0, 1]$, and the definition of $L_\ell$ in Algorithm~\ref{algo_implement:whole}, one has $L_{\hat{k}} \geq L_{\lceil k/2 \rceil}$. It then together with \eqref{thm4_afterSum} implies that
\beq \label{thm4_hatk_between}
\|x^{\hat{k}+1} - x^{\hat{k}}\| \leq \sqrt{ \frac{\Delta_k}{L_{\hat{k}}k} } \leq \sqrt{ \frac{\Delta_k}{L_{\lceil k/2 \rceil}k} }.
\eeq
In addition, notice from Algorithm~\ref{algo_implement:whole} that $r = \gamma \epsilon^\sigma/(4L_f)$. By this, $k > \underline{K}_\epsilon$, and \eqref{K-eps-low}, one has $\Delta_{k}/(L_{\lceil k/2 \rceil}k)<r^2$. Using this and \eqref{thm4_hatk_between}, we have
\beq \label{xhat-diff}
\|x^{\hat{k}+1} - x^{\hat{k}}\| < r.
\eeq
This together with the first-order optimality condition of \eqref{step:constrained_PG} for
$x^{\hat{k}+1}$ implies that 
\beq \label{xhatk-opt}
0 \in \nabla_x f(x^{\hat{k}}, y^{\hat{k}}) + L_{\hat{k}}(x^{\hat{k}+1} - x^{\hat{k}}) + \partial p(x^{\hat{k}+1}).
\eeq

By $\hat k \leq k$ and the assumption that $x^\ell \in \cX^\rc_\epsilon$ for all $0 \leq \ell \leq k$, one has  $x^{\hat k} \in \cX^\rc_\epsilon$. Using this and \eqref{xhat-diff},  we conclude that $\dist(0, \partial \Psi(x^{\hat k}))>\epsilon$ and  $\dist(0, \partial \Psi(x^{\hat k+1}))>\epsilon$. By these and $x^{\hat k}, x^{\hat k+1} \in\cX$, one can see that $x^{\hat k}, x^{\hat k+1} \in \cU_\epsilon$. In view of this, \eqref{Phi_smooth_constants}, \eqref{xhat-diff}, and Theorem \ref{thm:Phi_smooth}, we obtain that 
\beq \label{grad-diff}
\| \nabla^\rC_\cX F^*(x^{\hat{k}+1}) - \nabla^\rC_\cX F^*(x^{\hat{k}}) \| \leq L_{\nabla f} \|x^{\hat{k}+1} - x^{\hat{k}}\| + M \|x^{\hat{k}+1} - x^{\hat{k}}\|^\nu.
\eeq
In addition, by $\lceil k/2 \rceil \leq \hat k \leq k$, the monotonicity of $\{\eta_\ell\}$ and $\{\delta_\ell\}$, and the definition of $L_\ell$, one has $\eta_{\hat k}\leq \eta_{\lceil k/2 \rceil}$ and $L_{\lceil k/2 \rceil} \leq L_{\hat{k}} \leq L_{k}$. Using these, \eqref{lem9-ineq1-2}, \eqref{thm4_hatk_between}, \eqref{xhatk-opt}, \eqref{grad-diff}, $\partial \Psi = \partial F^*+ \partial p$, and $\nabla^\rC_\cX F^*(\cdot) \in \partial F^*(\cdot)$, we have
\[
\begin{aligned}
\operatorname{dist}\big(0, \partial \Psi(x^{\hat{k}+1})\big) 
&\overset{\eqref{xhatk-opt}}{\leq} \|\nabla^\rC_\cX F^*(x^{\hat{k}+1}) - \nabla_x f(x^{\hat{k}}, y^{\hat{k}}) - L_{\hat{k}}(x^{\hat{k}+1} - x^{\hat{k}}) \| \\
&\leq \| \nabla^\rC_\cX F^*(x^{\hat{k}+1}) - \nabla^\rC_\cX F^*(x^{\hat{k}}) \| + \| \nabla^\rC_\cX F^*(x^{\hat{k}}) - \nabla_x f(x^{\hat{k}}, y^{\hat{k}}) \| + L_{\hat{k}} \|x^{\hat{k}+1} - x^{\hat{k}}\| \\
&\overset{\eqref{lem9-ineq1-2}\eqref{grad-diff}}{\leq} L_{\nabla f} \|x^{\hat{k}+1} - x^{\hat{k}}\| + M \|x^{\hat{k}+1} - x^{\hat{k}}\|^\nu + (1-\theta)^{-1} C^{-1} L_{\nabla f} \eta_{\hat{k}}^{\frac{1}{2}} + L_{\hat{k}} \|x^{\hat{k}+1} - x^{\hat{k}}\| \\
&\overset{\eqref{thm4_hatk_between}}{\leq} L_{\nabla f} \sqrt{ \frac{\Delta_k}{L_{\hat{k}}k} } + \sqrt{ \frac{L_{\hat{k}} \Delta_k}{k} } + M \Big( \frac{\Delta_k}{L_{\hat{k}}k} \Big)^{\frac{\nu}{2}} + (1-\theta)^{-1} C^{-1} L_{\nabla f} \eta_{\hat{k}}^{\frac{1}{2}} \\
&\leq L_{\nabla f} \sqrt{ \frac{\Delta_k}{L_{\lceil k/2 \rceil} k} } + \sqrt{ \frac{L_{k} \Delta_k}{k} } + M \Big( \frac{\Delta_k}{L_{\lceil k/2 \rceil}k} \Big)^{\frac{\nu}{2}} + (1-\theta)^{-1} C^{-1} L_{\nabla f} \eta_{\lceil k/2 \rceil}^{\frac{1}{2}}.
\end{aligned}
\]
This together with $\hat{k}=\ell(k)$ implies that the conclusion holds.
\end{proof}

The following lemma will be used to prove Theorem \ref{thm:outer_bound} subsequently. 

\begin{lemma}\label{lem:bound-extended}
Let $\zeta, a, b, \omega > 0$ be given. Then the following statements hold.
\begin{enumerate}[label=(\roman*)]
\item If $t \geq \left\lfloor 2\zeta^{-1} \log(1/\zeta) \right\rfloor_+ \!+\! 1$, then\, $t^{-1} \log t < \zeta$.

\item If $t \geq \max\Big\{ (2a\zeta^{-1})^{1/\omega},\, \Big( \left\lfloor 4b(\omega\zeta)^{-1} \log\left( 2b/(\omega\zeta) \right) \right\rfloor_+ \!+\! 1 \Big)^{1/\omega} \Big\}$, then\, $t^{-\omega}(a + b \log t) < \zeta$.
\end{enumerate}
\end{lemma}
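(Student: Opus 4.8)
The plan is to derive both parts from a single elementary fact: $\log s < s/2$ for every $s>0$. This follows from $\log s = 2\log\sqrt{s} \le 2(\sqrt{s}-1) \le s/2$, where the last step is $(\sqrt{s}-2)^2 \ge 0$; the overall inequality is strict since the two intermediate equalities occur only at $s=1$ and $s=4$ respectively, hence never simultaneously. (Equivalently, $g(s):=s/2-\log s$ is minimized at $s=2$ with $g(2)=1-\log 2>0$.)

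For part (i), I would proceed as follows. For arbitrary $t,\zeta>0$, write $\log t = \log(\zeta t) + \log(1/\zeta)$ and bound $\log(\zeta t) < \zeta t/2$ using the fact above, obtaining $\log t < \zeta t/2 + \log(1/\zeta)$. Hence, whenever $t > 2\zeta^{-1}\log(1/\zeta)$ we get $\log(1/\zeta) \le \zeta t/2$ --- which holds vacuously if $\zeta \ge 1$, since then $2\zeta^{-1}\log(1/\zeta) \le 0 < t$ --- and therefore $\log t < \zeta t$, i.e. $t^{-1}\log t < \zeta$. To finish, I note that the hypothesis $t \ge \lfloor 2\zeta^{-1}\log(1/\zeta)\rfloor_+ + 1$ forces $t > 2\zeta^{-1}\log(1/\zeta)$: if $2\zeta^{-1}\log(1/\zeta) \ge 0$ this is immediate from $\lfloor x\rfloor + 1 > x$, and otherwise the threshold equals $1 > 0 > 2\zeta^{-1}\log(1/\zeta)$.

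For part (ii), put $\zeta' := \omega\zeta/(2b)$. It suffices to show $a < \tfrac12\zeta t^{\omega}$ and $b\log t < \tfrac12\zeta t^{\omega}$ and then add. The first is equivalent to $t > (2a/\zeta)^{1/\omega}$, guaranteed by the first entry of the maximum (a possible equality there is harmless, because the second bound will be strict). For the second, substitute $s := t^{\omega}$ (note $s \ge 1$ by the second entry of the maximum) to rewrite it as $s^{-1}\log s < \zeta'$; by part (i) this holds as soon as $s \ge \lfloor 4b(\omega\zeta)^{-1}\log(2b/(\omega\zeta))\rfloor_+ + 1$, which is exactly part (i)'s threshold with $\zeta$ replaced by $\zeta'$, and which is precisely the lower bound that the second entry of the maximum imposes on $t^{\omega}$. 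Adding the two inequalities yields $a + b\log t < \zeta t^{\omega}$, i.e. $t^{-\omega}(a + b\log t) < \zeta$, as required.

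Once the inequality $\log s < s/2$ is in hand the analytic content is trivial; the only delicate point --- which I regard as the main (minor) obstacle --- is the bookkeeping around the $\lfloor\cdot\rfloor_+$ operation and the strictness of the conclusion. One has to handle separately the degenerate regime $\zeta \ge 1$ in (i) and $\omega\zeta/(2b) \ge 1$ in (ii), where the logarithm is nonpositive and the stated threshold collapses to $1$, and one has to ensure the final inequality is genuinely strict even when the first entry of the maximum in (ii) is attained with equality.
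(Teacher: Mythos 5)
Your proof is correct. Part (ii) is handled exactly as the paper does: split $a+b\log t$ into two halves, bound the $a$-term directly, and apply part (i) to $s=t^\omega$ with $\zeta$ replaced by $\omega\zeta/(2b)$; the observation that equality in the $a$-bound is harmless because the log-bound is strict is the same in both. Part (i), however, goes by a genuinely different route. The paper's proof exploits the function $\phi(s)=s^{-1}\log s$: it notes $\phi\le 1/e$ everywhere (dispatching $\zeta>1/e$ immediately), and for $\zeta\le 1/e$ uses that $\phi$ is strictly decreasing on $[e,\infty)$ to evaluate $\phi$ at the threshold $2\zeta^{-1}\log(1/\zeta)$, then closes via the auxiliary bound $\log(2\log(1/\zeta))\le\log(1/\zeta)$. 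Your argument instead rests on the single elementary inequality $\log s<s/2$ together with the decomposition $\log t=\log(\zeta t)+\log(1/\zeta)$: the first summand is bounded by $\zeta t/2$ unconditionally, and the hypothesis on $t$ forces the second to be $\le\zeta t/2$ as well, with the case $\zeta\ge 1$ handled by noting the log is then nonpositive. This avoids both the monotonicity analysis of $\phi$ and the explicit evaluation at the threshold, so it is slightly more self-contained; the paper's version is a bit more computational but stays entirely within the $\phi$-framework. Both handle the $\lfloor\cdot\rfloor_+$ bookkeeping correctly, and both obtain the required strict inequality.
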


\begin{proof}
We first prove statement (i). Fix any $t \geq \left\lfloor 2\zeta^{-1} \log(1/\zeta) \right\rfloor_+ \!+\! 1$. Let $ \phi(s) = s^{-1} \log s $. It can be verified that $\phi$ is strictly decreasing on $[e, \infty)$ and $\phi(s) \leq \phi(e) = 1/e$ for all $s>0$. The latter relation and $t>0$ imply that $t^{-1} \log t = \phi(t) < \zeta$ holds if $\zeta>1/e$. We now assume $\zeta \leq 1/e$. It then follows that $t > 2\zeta^{-1} \log(1/\zeta) \geq 2e$, which along with the strict monotonicity of $\phi$ on $[e, \infty)$ implies that
\[
t^{-1} \log t = \phi(t) < \phi(2\zeta^{-1} \log(1/\zeta)) = \frac{\zeta}{2} \frac{\log((2/\zeta) \log(1/\zeta))}{\log(1/\zeta)} = \frac{\zeta}{2} \Big(1 + \frac{\log(2\log(1/\zeta))}{\log(1/\zeta)}\Big).
\]
In addition, notice that $\zeta\log(1/\zeta)=\phi(1/\zeta)\leq 1/e<1/2$, which implies that $\log(2\log(1/\zeta)) \leq \log(1/\zeta)$. By this and the above inequality, one can conclude that statement (i) also holds if $\zeta \leq 1/e$.

We next prove statement (ii). Fix any $t \geq \max\{ (2a\zeta^{-1})^{1/\omega},\, ( \lfloor 4b(\omega\zeta)^{-1} \log\left( 2b/(\omega\zeta) ) \rfloor_+ \!+\! 1 \right)^{1/\omega} \}$. Since $t \geq (2a\zeta^{-1})^{1/\omega}$, we have $t^{-\omega} a \leq \zeta/2$. 
In addition, notice that $t^\omega \geq \lfloor 4b(\omega\zeta)^{-1} \log( 2b/(\omega\zeta) ) \rfloor_+ \!+\! 1 $, which together with statement (i) implies that 
$t^{-\omega} \log (t^\omega)<\omega\zeta/(2b)$. It then follows that $b t^{-\omega} \log t = b\omega^{-1} t^{-\omega} \log (t^\omega)  < \zeta/2$. By this and $t^{-\omega} a \leq \zeta/2$, one has $t^{-\omega}(a + b \log t) < \zeta$, and hence statement (ii) holds as desired.
\end{proof}

We are now ready to prove Theorems~\ref{thm:outer_bound} and \ref{thm:operations}.

\begin{proof}[\textbf{Proof of Theorem~\ref{thm:outer_bound}}] 
For notational convenience, let $k=\widehat{K}_\epsilon$. Suppose for contradiction that a $(\gamma \epsilon^\sigma/(4L_f),\epsilon)$-stationary point of problem \eqref{intro_problem} is not generated by Algorithm \ref{algo_implement:whole} in $k$ iterations.  

We first prove by induction that $\{(x^\ell,y^\ell)\}^k_{\ell=0}$ are successfully generated by Algorithm~\ref{algo_implement:whole}. Indeed, since $(x^0,y^0)$ is the initial point, it is generated by Algorithm~\ref{algo_implement:whole}. Now suppose $\{(x^i,y^i)\}^\ell_{i=0}$ are generated by Algorithm~\ref{algo_implement:whole} for some $0\leq \ell<k$. Since none of $\{x^i\}^\ell_{i=0}$ is a $(\gamma \epsilon^\sigma/(4L_f),\epsilon)$-stationary point of \eqref{intro_problem}, it follows that $x^i \in\cX^\rc_\epsilon$ for all $0 \leq i \leq \ell$, where $\cX^\rc_\epsilon$ is defined in \eqref{X-eps}. By this and Lemma~\ref{lem:subsolver_output}, one has that $F^*(x^{\ell}) - F(x^{\ell}, y^{\ell}) \le \gamma \epsilon^\sigma/2$. In addition, notice from \eqref{step:constrained_PG} that $x^{{\ell}+1}$ is well-defined and thus successfully generated by Algorithm~\ref{algo_implement:whole}, and moreover, $\|x^{{\ell}+1} - x^{\ell}\| \le r = \gamma \epsilon^\sigma / (4L_f)$. By these, Lemma~\ref{lem:Lips}, and the $L_f$-Lipschitz continuity of $F(\cdot, y)$ for each $y \in \mathcal{Y}$, we have
\begin{align}
F^*(x^{{\ell}+1}) - F(x^{{\ell}+1}, y^{\ell}) &= F^*(x^{{\ell}+1}) - F^*(x^{\ell}) + F^*(x^{\ell}) - F(x^{\ell}, y^{\ell}) + F(x^{\ell}, y^{\ell}) - F(x^{{\ell}+1}, y^{\ell}) \nn \\
&\leq 2L_f \|x^{{\ell}+1} - x^{\ell}\|+  \tfrac{1}{2} \gamma \epsilon^\sigma \leq 2L_f r + \tfrac{1}{2} \gamma \epsilon^\sigma = \gamma \epsilon^\sigma. \label{F-x-ell}
\end{align}
In addition, since $x^{\ell} \in \cX^\rc_\epsilon$,  $\|x^{{\ell}+1} - x^{\ell}\| \leq \gamma \epsilon^\sigma / (4L_f)$, and 
$x^{{\ell}+1}\in\cX$, one can see from \eqref{K-eps} that $\dist(0,\partial\Psi(x^{{\ell}+1}))>\epsilon$. This and \eqref{KL_y} imply that
\[
C (F^*(x^{{\ell}+1}) - F(x^{{\ell}+1}, y))^{\theta} \leq \operatorname{dist}(0, \partial_y F(x^{{\ell}+1}, y)) \quad \forall y \text{ with } F^*(x^{{\ell}+1}) > F(x^{{\ell}+1}, y) \geq F^*(x^{{\ell}+1}) - \gamma \epsilon^\sigma.
\]
Hence, \eqref{KL_subpr} holds for the function $h(\cdot)=-F(x^{{\ell}+1},\cdot)$ with $\delta=\gamma \epsilon^\sigma$.  It follows from this and \eqref{F-x-ell} that $y^{{\ell}}$ serves as a suitable initial point for applying Algorithm~\ref{algo_implement:subsolver} to solve the problem $\min_y -F(x^{{\ell}+1},y)$, or equivalently, $\min_y \{-f(x^{{\ell}+1},y)+q(y)\}$. In view of Theorem~\ref{thm:subsolver_iters}, $y^{{\ell}+1}$ is then successfully generated by Algorithm~\ref{algo_implement:whole} via applying Algorithm~\ref{algo_implement:subsolver} to this problem. Hence, the induction is completed.

We next derive a contradiction to the above hypothesis. By the definition of $\underline{L}$ in \eqref{thm5_A_underL}, $\nu \in (0, 1]$, $\delta_\ell \leq 1$, and the definition of $L_\ell$ in Algorithm~\ref{algo_implement:whole}, we see that $L_\ell \geq \underline{L}$ for all $0 \leq \ell \leq k$. In addition, observe from \eqref{thm5_C1}, \eqref{thm5_C2}, \eqref{K-eps-hat}, $\epsilon \in (0, 1/e]$, and $\nu \in (0, 1]$ that $\widehat{K}_\epsilon \geq 2$, which together with $k=\widehat{K}_\epsilon$ implies $k \geq 2$. In view of these, \eqref{thm5_A_underL}, \eqref{thm5_ab}, \eqref{thm4_Deltak}, and the definitions of $\{\delta_\ell\}$ and $\{\eta_\ell\}$, one has
\begin{align}
    \Delta_k &\overset{\eqref{thm4_Deltak}\eqref{thm5_A_underL}}{=} 8 \Big[\Psi(x^0) - \Psi^* + \eta_{k+1} + \sum_{\ell=0}^{k}  \Big(1+\frac{A}{L_\ell} \Big)\eta_\ell + \sum_{\ell=0}^{k}\frac{\delta_\ell}{2} \Big]
    \leq 8 \Big[\Psi(x^0) - \Psi^* + \sum_{\ell=0}^{k+1} \Big(1+\frac{A}{L_\ell} \Big)\eta_\ell + \sum_{\ell=0}^{k+1} \frac{\delta_\ell}{2} \Big] \nn \\
    &\leq 8 \Big[ \Psi(x^0) - \Psi^* + \Big( \frac{3}{2} + \frac{A}{\underline{L}} \Big) \sum_{\ell=0}^{k+1} \frac{1}{\ell+1} \Big] \leq 8 \left[ \Psi(x^0) - \Psi^* + \Big( \frac{3}{2} + \frac{A}{\underline{L}} \Big) \Big( 1 + \int_0^{k+1} \frac{1}{1+t} \, dt \Big) \right] \nn \\
    &= 8 \big[ \Psi(x^0) - \Psi^* + (3/2 + A\underline{L}^{-1}) \big( 1 + \log(k+2) \big) \big] \leq 8 \big[ \Psi(x^0) - \Psi^* + (3/2 + A\underline{L}^{-1}) ( 2 + \log k ) \big] \nn \\
    &= 8 \big[ \Psi(x^0) - \Psi^* + 3 + 2A\underline{L}^{-1} + (3/2 + A\underline{L}^{-1}) \log k \big] \overset{\eqref{thm5_ab}}{=} a + b \log k, \label{thm5_DeltaBd}
\end{align} 
where the last inequality follows from $\log(k+2) \leq \log k + 1$ due to $k \geq 2$. Similarly, one can show that $\Delta_{k'} \leq a + b\log k'$ for all $k' \geq k$. Let us fix any $k' \geq k$. Notice from $\delta_\ell = 1/(\ell+1)$ that $\delta_{\lceil k'/2 \rceil} \leq 2/k'$, which along with the definition of $L_\ell$ and $\nu \in (0, 1]$ implies that $L_{\lceil k'/2 \rceil} \geq (k'/2)^{(1-\nu)/(1+\nu)} M^{2/(1+\nu)}$. In view of these relations and $\nu \in (0, 1]$, we can see that
\beq \label{thm5_kover2Bound}
\frac{\Delta_{k'}}{L_{\lceil k'/2 \rceil} k'} \leq \frac{a + b \log k'}{(k'/2)^{\frac{1-\nu}{1+\nu}} k' M^{\frac{2}{1+\nu}}} \leq \frac{2(a + b \log k')}{k'^{\frac{2}{1+\nu}} M^{\frac{2}{1+\nu}}}.
\eeq
Using this and Lemma~\ref{lem:bound-extended}, we observe that $\Delta_{k'}/ (L_{\lceil k'/2 \rceil} k') < \gamma^2 \epsilon^{2\sigma}/(16L_f^2)$, since
\[
k' \geq k =\widehat{K}_\epsilon \geq \max\Big\{ \Big(\frac{64aL_f^2}{\gamma^2 \epsilon^{2\sigma}M^{2/(1+\nu)}}\Big)^{\frac{1+\nu}{2}}, \Big( \Big\lfloor \frac{64(1+\nu)bL_f^2}{\gamma^2 \epsilon^{2\sigma} M^{2/(1+\nu)}} \log \Big(\frac{32(1+\nu)bL_f^2}{\gamma^2 \epsilon^{2\sigma} M^{2/(1+\nu)}}\Big) \Big\rfloor_+ + 1 \Big)^{\frac{1+\nu}{2}} \Big\},
\]
where the last inequality is due to $\epsilon \in (0, 1/e]$, \eqref{thm5_C7}, \eqref{thm5_C8}, and \eqref{K-eps-hat}. It then follows from the arbitrariness  of $k'$ and the definition of $\underline{K}_\epsilon$ in \eqref{K-eps-low} that $\underline{K}_\epsilon < k$.

In addition, observe from $k=\widehat{K}_\epsilon$, $\epsilon \in (0, 1/e]$, \eqref{thm5_C5}, \eqref{thm5_C6}, and \eqref{K-eps-hat} that
\[
k \geq \max \Big\{ \Big(\frac{144aL_{\nabla f}^2}{\epsilon^2 M^{2/(1+\nu)}}\Big)^{\frac{1+\nu}{2}}, \; \Big( \Big\lfloor \frac{144(1+\nu)bL_{\nabla f}^2}{\epsilon^2 M^{2/(1+\nu)}} \log \Big(\frac{72(1+\nu)bL_{\nabla f}^2}{\epsilon^2 M^{2/(1+\nu)}}\Big) \Big\rfloor_+ + 1 \Big)^{\frac{1+\nu}{2}} \Big\}.
\]
It then follows from \eqref{thm5_kover2Bound} and Lemma~\ref{lem:bound-extended} that $\Delta_{k}/ (L_{\lceil k/2 \rceil} k) \leq \epsilon^2 / (36 L_{\nabla f}^2)$, which implies that
\beq \label{thm5_b1}
L_{\nabla f} \sqrt{ \frac{\Delta_k}{L_{\lceil k/2 \rceil} k} } \leq \frac{\epsilon}{6}.
\eeq
Similarly, notice from $k=\widehat{K}_\epsilon$, $\epsilon \in (0, 1/e]$, \eqref{thm5_C1}, \eqref{thm5_C3C4}, and \eqref{K-eps-hat} that
\[
k \geq \max \Big\{ \Big(\frac{36\underline{L}a}{\epsilon^2}\Big)^{\frac{1+\nu}{2\nu}}, \Big( \Big\lfloor \frac{36(1+\nu)b\underline{L}}{\nu \epsilon^2} \log \Big(\frac{18(1+\nu)b\underline{L}}{\nu \epsilon^2}\Big) \Big\rfloor_+ + 1 \Big)^{\frac{1+\nu}{2\nu}} \Big\}. 
\]
It then follows from \eqref{thm5_DeltaBd} and Lemma~\ref{lem:bound-extended} that $\Delta_{k}/ k^{2\nu/(1+\nu)} \leq \epsilon^2 / (18 \underline{L})$. By this, the definitions of $L_\ell$ and $\{\delta_\ell\}$, $\underline{L}=L_{\nabla f} + M^{2/(1+\nu)}$, $\nu \in (0, 1]$, and $\delta_\ell \leq 1$, we have
\begin{align*}
\frac{L_k \Delta_k}{k} &= \frac{(L_{\nabla f} + \delta_k^{\frac{\nu-1}{1+\nu}} M^{\frac{2}{1+\nu}}) \Delta_k}{k} \leq \frac{(L_{\nabla f} + M^{\frac{2}{1+\nu}}) \delta_k^{\frac{\nu-1}{1+\nu}} \Delta_k}{k} = \frac{\underline{L} (k+1)^{\frac{1-\nu}{1+\nu}} \Delta_k}{k} \leq \frac{2\underline{L}\Delta_k}{k^{\frac{2\nu}{1+\nu}}} \leq \frac{\epsilon^2}{9},
\end{align*}
where the second inequality follows from $(k+1)^{\frac{1-\nu}{1+\nu}} \leq 2k^{\frac{1-\nu}{1+\nu}}$ due to $k \geq 2$. Hence, we obtain
\beq \label{thm5_b2}
\sqrt{\frac{L_{k} \Delta_k}{k}} \leq \frac{\epsilon}{3}.
\eeq
Also, by $k=\widehat{K}_\epsilon$, $\epsilon \in (0, 1/e]$, \eqref{thm5_C2}, \eqref{thm5_C3C4}, and \eqref{K-eps-hat}, we can see that
\[
k \geq \max \Big\{ \Big(\frac{4a(3M)^{2/\nu}}{M^{2/(1+\nu)} \epsilon^{2/\nu}}\Big)^{\frac{1+\nu}{2}}, \Big( \Big\lfloor \frac{4b(1+\nu)(3M)^{2/\nu}}{M^{2/(1+\nu)} \epsilon^{2/\nu}} \log \Big(\frac{2b(1+\nu)(3M)^{2/\nu}}{M^{2/(1+\nu)} \epsilon^{2/\nu}}\Big) \Big\rfloor_+ + 1 \Big)^{\frac{1+\nu}{2}} \Big\}. 
\]
It then follows from \eqref{thm5_kover2Bound} and Lemma~\ref{lem:bound-extended} that $\Delta_{k}/ (L_{\lceil k/2 \rceil} k) \leq \epsilon^{2/\nu}/(3M)^{2/\nu}$, which implies that
\beq \label{thm5_b3}
M \Big( \frac{\Delta_k}{L_{\lceil k/2 \rceil}k} \Big)^{\frac{\nu}{2}} \leq \frac{\epsilon}{3}.
\eeq
Lastly, using $k=\widehat{K}_\epsilon$, \eqref{thm5_C3C4} and \eqref{K-eps-hat} yields $k \geq \lceil 72A \epsilon^{-2}\rceil$. By this and the definition of $\eta_k$, one has
\beq \label{thm5_b4}
A^{\frac{1}{2}} \eta_{\lceil k/2 \rceil}^{\frac{1}{2}} \leq \left( \frac{2A}{k+2} \right)^{\frac{1}{2}} \leq \frac{\epsilon}{6}.
\eeq

Recall from the above hypothesis that none of $\{x^\ell\}^k_{\ell=0}$ is a $(\gamma \epsilon^\sigma/(4L_f),\epsilon)$-stationary point of \eqref{intro_problem}. Hence, $x^\ell \in\cX^\rc_\epsilon$ for all $0 \leq \ell \leq k$. Moreover, it follows from this and the definition of $\overline{K}_\epsilon$ in~\eqref{K-eps} that $k \leq \overline{K}_\epsilon$. By $\underline{K}_\epsilon < k \leq \overline{K}_\epsilon$ and $x^\ell \in \cX^\rc_\epsilon$ for all $0 \leq \ell \leq k$, it follows from Lemma \ref{lem:gen_converge} that \eqref{eq:dist_to_stationary} holds for such $k$, which together with the definition of $A$ in \eqref{thm5_ab} leads to
\[
\operatorname{dist}\big(0, \partial \Psi(x^{\ell(k)+1})\big) \leq L_{\nabla f} \sqrt{ \frac{\Delta_k}{L_{\lceil k/2 \rceil} k} } + \sqrt{ \frac{L_{k} \Delta_k}{k} } + M \Big( \frac{\Delta_k}{L_{\lceil k/2 \rceil}k} \Big)^{\frac{\nu}{2}} + A^{\frac{1}{2}} \eta_{\lceil k/2 \rceil}^{\frac{1}{2}}.
\]
Combining this with \eqref{thm5_b1}, \eqref{thm5_b2}, \eqref{thm5_b3}, and \eqref{thm5_b4}, we obtain that
\[
\dist(0, \partial \Psi(x^{\hat{k}+1})) \leq \frac{\epsilon}{6} + \frac{\epsilon}{3} + \frac{\epsilon}{3} + \frac{\epsilon}{6} = \epsilon.
\]
In addition, notice from Algorithm~\ref{algo_implement:whole} that $\|x^{\hat{k}+1} - x^{\hat{k}}\| \leq  \gamma \epsilon^\sigma/(4L_f)$. It follows from these and the definition of $\cX^\rc_\epsilon$ in \eqref{X-eps} that $x^{\hat{k}} \notin \cX^\rc_\epsilon$. Since $\hat{k} \leq k$, this contradicts the assumption that $x^\ell \in \cX^\rc_\epsilon$ for all $0 \leq \ell \leq k$, which is implied by the hypothesis.  Hence, Algorithm \ref{algo_implement:whole} generates a pair $(x^k,y^k)$ such that $x^k$ is a $(\gamma \epsilon^\sigma/(4L_f),\epsilon)$-stationary point of problem \eqref{intro_problem} for some $0\leq k \leq \widehat{K}_\epsilon$. Moreover, it follows from Lemma \ref{lem:subsolver_output} that $y^k$ satisfies \eqref{yk-opt}.
\end{proof}

We next present the proof of Theorem~\ref{thm:operations}.

\begin{proof}[\textbf{Proof of Theorem~\ref{thm:operations}}] 
Let $\cX^\rc_\epsilon$ be defined in \eqref{K-eps}. The conclusion clearly holds if $x^0 \notin \cX^\rc_\epsilon$. Hence, we assume for the remainder of the proof that $x^0 \in \cX^\rc_\epsilon$. Given this and $\epsilon \in (0, 1/e]$, it follows from Theorem~\ref{thm:outer_bound} that there exists $0 \leq \overline{k} < \widehat{K}_\epsilon$ such that $x^\ell \in \cX^\rc_\epsilon$ for all  $0 \leq \ell \leq \overline{k}$ and $x^{\overline{k}+1} \notin \cX^\rc_\epsilon$. That is, the iterates $\{x^\ell\}^{\overline{k}}_{\ell=0}$ are not, but $x^{\overline{k}+1}$ is, a $(\gamma \epsilon^\sigma / (4L_f),\epsilon)$-stationary point of problem~\eqref{intro_problem}.

We first observe from Algorithm~\ref{algo_implement:whole} that the number of evaluations of the proximal operator $p$ equals the number of iterations. By this and $\overline{k} < \widehat{K}_\epsilon$, it follows that the total number of evaluations of $p$ to generate the $(\gamma \epsilon^\sigma / (4L_f),\epsilon)$-stationary point $x^{\overline{k}+1}$ is $\overline{k}+1 \leq \widehat{K}_\epsilon$.

We next show that the total number of evaluations of the proximal operator of $q$ performed in Algorithm~\ref{algo_implement:whole} to generate the $(\gamma \epsilon^\sigma / (4L_f),\epsilon)$-stationary point $x^{\overline{k}+1}$ is at most $\widehat{N}_2$. To this end, we analyze the number of evaluations of the proximal operator of $q$ conducted at each iteration $0 \leq \ell' \leq \overline{k}$, through its calls to Algorithm~\ref{algo_implement:subsolver}. As observed from Algorithm~\ref{algo_implement:whole} and the proof of Lemma~\ref{lem:gen_converge}, Algorithm~\ref{algo_implement:subsolver} is invoked to solve problem~\eqref{pr:KL_opt} with $h(\cdot) = -F(x^{\ell'+1}, \cdot)$, where $h$ satisfies condition~\eqref{KL_subpr} with $\delta = \gamma \epsilon^\sigma$.
By this, the definitions of $\tau$ and $\{\eta_\ell\}$ in Algorithm~\ref{algo_implement:whole}, $\overline{k} < \widehat{K}_\epsilon$, and \eqref{cor1_K}, it follows from Theorem~\ref{thm:subsolver_iters} that the number of outer iterations performed in Algorithm~\ref{algo_implement:subsolver} at each iteration $0 \leq \ell' \leq \overline{k}$ is at most $\overline{K}_{f, \theta}$, where $\overline{K}_{f, \theta}$ is defined in \eqref{cor1_Kover}. Using this and Theorem~\ref{thm:sub_LS}, we can see that at each iteration $0 \leq \ell' \leq \overline{k}$, the number of evaluations of $q$ is at most
\[
\Big( \Big\lceil \frac{\log (2L_{\nabla f} \overline{\lambda})}{\log \rho^{-1}} \Big\rceil_+ + 1 \Big) \overline{K}_{f, \theta}.
\]
By this bound, the fact that the total number of iterations performed by Algorithm~\ref{algo_implement:whole} to generate the $(\gamma \epsilon^\sigma / (4L_f),\epsilon)$-stationary point $x^{\overline{k}+1}$ is at most $\widehat{K}_\epsilon$, and \eqref{cor1_Neps}, we conclude that the total number of evaluations of the proximal operator of $q$ performed by Algorithm~\ref{algo_implement:whole} to generate a $(\gamma \epsilon^\sigma / (4L_f),\epsilon)$-stationary point is at most $\widehat{N}_\epsilon$.

Lastly, notice that the total number of evaluations of $\nabla f$ is no more than the sum of the total number of evaluations of the proximal operators of $ p $ and $ q $. It then follows that the total number of evaluations of $\nabla f$ performed in Algorithm~\ref{algo_implement:whole} to generate a $(\gamma \epsilon^\sigma / (4L_f),\epsilon)$-stationary point is at most $\widehat{K}_\epsilon + \widehat{N}_\epsilon$.
\end{proof}

\section{Concluding remarks} \label{sec:concluding}
In this paper, we consider a class of nonconvex--nonconcave minimax problems for which the inner maximization problem satisfies the local KL condition~\eqref{KL_y} with parameters $\theta \in [1/2,1)$, $\gamma > 0$, and $\sigma > 0$. Although minimax problems in which the inner maximization problem satisfies a global KL or PL condition form a special case of this class, the general complexity results developed here may become weak when applied to such problems. This is not surprising, as our complexity analysis targets worst-case instances in this broader class, where the inner maximization problem satisfies only a local KL condition over a region that shrinks as the outer variable approaches a stationary point.

Nevertheless, for the global KL or PL settings, as well as for other scenarios such as (i) $\theta \in (0,1/2)$, $\gamma > 0$, and $\sigma > 0$, and (ii) $\theta \in (0,1)$, $\gamma > 0$, and $\sigma = 0$, one can design tailored algorithms together with refined analyses that may yield stronger stationarity guarantees and improved complexity bounds.

\end{document}